\newtheorem{theorem}{Theorem}
\newtheorem{corollary}{Corollary}
\newtheorem{lemma}{Lemma}
\newtheorem{proposition}{Proposition}
\newtheorem{definition}{Definition}
\newcommand{\er}[0]{\mathbf e_r}
\newcommand{\erp}[0]{\mathbf e_r'}
\newcommand{\ez}[0]{\mathbf e_z}
\title[Prescribed rate of curvature blowup]{ Minimal laminations  with prescribed convex curvature blowup}
\author{Stephen J. Kleene}
\address{Department of Mathematics, Brown University, Providence, RI 02906}
\begin{document}
\maketitle

\begin{abstract}
We construct minimal laminations with prescribed singularities on a line segment using perturbation techniques and PDE methods. In addition to the singular set, the rate of curvature blowup is also prescribable in our construction, and we show that all curvature blowup rates between quadratic and quartic arise. Our result generalizes an earlier result of the author and of Hoffman and White.
\end{abstract}

\section{introduction}
In this article, we use  PDE methods to construct embedded minimal disks  in a fixed ball containing a straight line segment, and with prescribed values  for $|A|^2$ along the line segment, subject to a very general constraint:

\begin{theorem} \label{MainTheorem}
Let $\mathcal{H}$ denote the helicoid and let $\lambda(\sigma) : [0 , 1] \rightarrow R^+$ denote a positive  ``scale function'' satisfying
\begin{align} \label{ScaleFunctionAssumptions}
\lambda <  C_0, \quad |\dot{\lambda}| \leq C_1 \lambda^\epsilon, \quad  |\ddot{\lambda}\lambda| \leq C_1 \lambda^\epsilon, \quad |\dddot{\lambda} \lambda^2|  \leq C_1  \lambda^\epsilon, 
\end{align}
for constants $\epsilon > 0$, $C_0 > 0$ and $C_1 > 0$, and where `` $\dot{}$ '' denotes derivation $\sigma$. Then given $\epsilon$ and $C_1$, there is $C_0$ and $R_0 > 0$ so that for all functions satisfying the conditions above, there is a surface $\mathcal{H}^*$ such that:
\begin{enumerate} 
\item $\mathcal{H}^*$ is embedded in the cylinder $\{x^2 + y^2 \leq R_0^2, 0 \leq z \leq 1 \}$ .
\item $\mathcal{H}^*$ is locally a scaled small perturbation of $\mathcal{H}$ in the following sense: There is $c > 0$ so that for $\sigma \in [0, 1]$, the surface
\begin{align} \notag
\tilde{\mathcal{H}}^*_\sigma : = \lambda^{-1} (\sigma)\left(\mathcal{H}^* - (0, 0, \sigma)\right) \cap B(0, c /\lambda(\sigma))
 \end{align}
 is a smooth perturbation of $\mathcal{H} \cap B(0, \epsilon /\lambda(p))$.
\end{enumerate}
\end{theorem}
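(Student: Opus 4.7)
The plan is a standard singular perturbation / Lyapunov-Schmidt scheme: first build an explicit \emph{approximate} minimal surface whose local geometry already has the prescribed scale $\lambda(\sigma)$, then solve the minimal surface equation as a small normal graph correction. The slow-variation hypotheses \eqref{ScaleFunctionAssumptions} are precisely what one needs to ensure that the mean curvature of the approximate surface, measured at the \emph{rescaled} scale $\lambda(\sigma)^{-1}$ at each height, is small enough in $\sigma$ to solve a nonlinear fixed-point problem.

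\textbf{Step 1: the approximate surface.} I would parametrize $\mathcal{H}_0$ as a varying-pitch helicoid
\[
  X(r,z) = \bigl(r\cos\Theta(z),\; r\sin\Theta(z),\; z\bigr), \qquad \Theta(z) = \int_0^z \lambda^{-1}(s)\,ds,
\]
so that at each height $z$ the surface is tangent to the helicoid of pitch $\lambda(z)$. Using the parametrization of $\mathcal{H}$ by $(r,\theta)\mapsto(r\cos\theta,r\sin\theta,\theta)$, the blowdown $\tilde{\mathcal H}^*_\sigma$ in item~(2) of the theorem corresponds to holding $\sigma$ fixed and working in the coordinate $\tilde z := (z-\sigma)/\lambda(\sigma)$ with $\tilde r := r/\lambda(\sigma)$. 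A short computation shows that in these coordinates the mean curvature of $\mathcal H_0$ is polynomial in $\dot\lambda,\ddot\lambda,\dddot\lambda$ evaluated at $\sigma$, weighted by inverse powers of $\lambda$ that the three hypotheses of \eqref{ScaleFunctionAssumptions} are tuned to absorb. Concretely, in appropriate weighted Hölder norms (weighted by the cylindrical distance to the axis, localized in a ball of radius $\sim 1/\lambda(\sigma)$ in the rescaled picture) the error should satisfy $\|H_{\mathcal H_0}\|\lesssim \lambda(\sigma)^\epsilon$.

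\textbf{Step 2: linearization and inversion.} Writing $\mathcal{H}^* = \operatorname{graph}_{\mathcal H_0}(u\nu_{\mathcal H_0})$ converts the minimality condition into a quasilinear elliptic equation of the schematic form
\[
  \mathcal{L}_{\mathcal H_0} u \;=\; -H_{\mathcal H_0} + Q(u,\nabla u,\nabla^2 u),
\]
where $\mathcal{L}_{\mathcal H_0} = \Delta_{\mathcal H_0} + |A|^2_{\mathcal H_0}$ is the Jacobi operator and $Q$ is quadratic at the origin. The core analytic input is a linear solvability result for $\mathcal{L}_{\mathcal H_0}$ in a scale of weighted spaces on the cylinder $\{r^2\le R_0^2,\,0\le z\le 1\}$ adapted to the variable scale $\lambda(\sigma)$: one needs right inverses with norms uniform in $\lambda$ in the rescaled picture, together with Dirichlet-type conditions on the boundary cylinder $r=R_0$. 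This must cope with the explicit kernel of the helicoid Jacobi operator (rotations, vertical translations, dilations), which I would handle by a Lyapunov-Schmidt reduction: project onto the orthogonal complement of the relevant bounded Jacobi fields locally in $\sigma$, solve there, and absorb the finite-dimensional residue by varying $\lambda$ and the phase $\Theta$ by an equivalent slowly-varying perturbation. I expect this step, rather than the error computation, to be the main obstacle, because the weighted spaces must interpolate between very different geometric scales.

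\textbf{Step 3: contraction mapping and verification of (1)–(2).} With the linear right inverse $\mathcal{G}$ in hand, the nonlinear problem becomes $u = \mathcal{G}\bigl(-H_{\mathcal H_0} + Q(u,\nabla u,\nabla^2u)\bigr)$. The smallness of $H_{\mathcal H_0}$ from Step~1, plus the quadratic estimate on $Q$, gives a contraction on a ball of radius $\sim\lambda^\epsilon$ in the appropriate weighted norm; a Banach fixed point yields the solution $u$. Embeddedness and property (1) follow from the $C^1$-smallness of $u$ at every local scale, i.e.\ $|u|/\lambda(\sigma)\to 0$, which prevents self-intersection inside the cylinder. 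Property (2) is then immediate: the rescaled surfaces $\tilde{\mathcal H}^*_\sigma$ are normal graphs of functions $\tilde u_\sigma$ over $\mathcal H\cap B(0,c/\lambda(\sigma))$ with $\|\tilde u_\sigma\|_{C^{2,\alpha}}\to 0$ as $\lambda(\sigma)\to 0$, so they are the required smooth perturbations of $\mathcal H$.
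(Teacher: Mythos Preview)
Your overall architecture---build an approximate varying-scale helicoid, linearize, invert, close by a fixed point---matches the paper's. Your Step~1 is essentially the paper's $F^* = B\circ F$, and your mean-curvature estimate $\|H\|\lesssim\lambda^\epsilon$ is exactly Proposition~\ref{FStarGeometry}(7) together with Proposition~\ref{FStarGraphMC}.

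The substantive divergence, and the gap, is in Step~2. You propose a Lyapunov--Schmidt reduction in which the cokernel is absorbed by ``varying $\lambda$ and the phase $\Theta$.'' The paper does something quite different and more explicit: it does \emph{not} adjust $\lambda$ (which is prescribed) or $\Theta$, but instead imposes a Robin condition at $|s|=\ell$ chosen so that the kernel of $\tilde{\mathcal L}=\partial_{ss}+\partial_{zz}+2\cosh^{-2}(s)$ is exactly $\tanh(s)$ (Proposition~\ref{WhatsInTheRobinKernel}), then works orthogonally to $\tanh(s)$ along each line $z=\text{const}$. The decay in $z$ comes from a one-dimensional weighted Poincar\'e inequality (Proposition~\ref{PoincareInequality}) and a maximum principle (Proposition~\ref{LimitSolutionsMaximumPrinciple}), yielding $1/(1+|z|)$ control on strips. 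More importantly, the paper identifies the \emph{main} obstacle as the interaction between regions of $\Lambda$ at incomparable scales---there is no lower bound on $\lambda$, so $\lambda_{\min}/\lambda_{\max}$ is unbounded below---and handles it by partitioning into strips of height $2\pi$, solving on each, cutting off, and summing; the convex-pinching hypotheses \eqref{ScaleFunctionAssumptions} enter precisely here (Propositions~\ref{ZGoodInterval}--\ref{SigmaGoodInterval}) to guarantee that only strips of comparable scale overlap, so the sum converges with a controllable loss (Proposition~\ref{FirstIterationFunctions}), and an iteration closes (proof of Proposition~\ref{MainInvertibilityStatement}).

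Your proposal does not address this scale-interaction problem at all beyond acknowledging that ``the weighted spaces must interpolate between very different geometric scales.'' A Lyapunov--Schmidt scheme local in $\sigma$ would face the same issue: the reduced equations at different heights are coupled through the PDE in $z$, and you would still need a mechanism---analogous to the paper's patching-plus-convexity argument---to prevent errors from propagating across scale regimes and accumulating. Without that, the uniform right inverse you invoke in Step~3 is not established.
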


Our Theorem recovers the   main results of \cite{HW}  and \cite{K1},   which were in the former case obtained using  geometric measure theory  and in the latter the Weierstrass Representation, and which we state in plain language below:
 
 \begin{corollary}[From \cite{HW} and \cite{K1}] \label{KHWTheorem}
Given a compact subset $K$  of a line segment in $R^3$, there exists a sequence of minimal disks, properly embedded in a ball  $B$ containing $K$, converging to a minimal lamination with singularities exactly on $K$.
 \end{corollary}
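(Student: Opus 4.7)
The plan is to apply the Main Theorem to a sequence of scale functions $\lambda_n$ that converge to a function vanishing exactly on $K$, and then pass to a subsequential limit of the resulting minimal disks. After an affine reparameterization I may assume $K \subset [0,1]$ lies on the $z$-axis, and I fix $\epsilon \in (0,1)$ small for later insertion into the hypotheses of the Main Theorem.

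\textbf{Constructing the scale functions.} Let $d(\sigma) = \mathrm{dist}(\sigma, K)$ and choose an exponent $\alpha > 1/(1-\epsilon)$. The model choice is $\lambda = d^\alpha$: where $d$ is smooth (so $\ddot d = \dddot d = 0$), the three derivative bounds in \eqref{ScaleFunctionAssumptions} reduce to the elementary requirements $\alpha \geq k/(k-\epsilon)$ for $k = 1, 2, 3$, all met by this $\alpha$. Since $d$ is only Lipschitz on $[0,1]$, I would obtain $\lambda_n$ as a smoothed and truncated version: mollify $d$ at scale $n^{-1}$ to get $\tilde d_n$, and set for example $\lambda_n = (\tilde d_n^{2\alpha} + n^{-2\alpha})^{1/2}$. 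This produces a smooth positive function with $\lambda_n \geq n^{-\alpha}$, $\lambda_n(\sigma) \to 0$ for $\sigma \in K$, and $\lambda_n \to d^\alpha$ uniformly on compact subsets of $[0,1] \setminus K$.

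\textbf{Uniform bounds and applying the Main Theorem.} The key technical task is to show that the constants $C_0, C_1$ in \eqref{ScaleFunctionAssumptions} can be chosen independently of $n$. On regions where $\tilde d_n \gtrsim 1/n$ the analysis reduces to the smooth-$d$ case handled above. In the mollification zone $\tilde d_n \lesssim 1/n$ one uses $|\tilde d_n^{(k)}| \lesssim n^{k-1}$ together with $\lambda_n \sim n^{-\alpha}$, and the inequalities $\alpha \geq k/(k-\epsilon)$ ensure that the growth of the derivatives is absorbed into $\lambda_n^\epsilon \sim n^{-\alpha\epsilon}$. With uniform constants in hand, the Main Theorem supplies a minimal disk $\mathcal{H}^*_n$ for each $n$, all properly embedded in a common cylinder $\{x^2 + y^2 \leq R_0^2, \, 0 \leq z \leq 1\}$ containing $K$.

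\textbf{Passing to the lamination limit.} Off $K$ the functions $\lambda_n$ are uniformly bounded below on compacta, and property (2) of the Main Theorem, combined with the scaling $|A_{\mathcal{H}^*_n}| \sim \lambda_n^{-1}$ inherited from the helicoid, yields uniform $C^2$ bounds on $\mathcal{H}^*_n$ on compact subsets of $B \setminus K$. Standard compactness results for embedded minimal surfaces then give a subsequence converging smoothly on such compacta to a minimal lamination $\mathcal{L}$. Points $\sigma \in K$ must be singular for $\mathcal{L}$: at any such height the curvature of $\mathcal{H}^*_n$ blows up like $\lambda_n(\sigma)^{-1} \to \infty$, obstructing any smooth extension. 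Points off $K$ are regular by the uniform bounds just mentioned. Hence the singular set of $\mathcal{L}$ equals $K$, as required.

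\textbf{Main obstacle.} The hardest step is the construction of $\lambda_n$ with constants $C_0, C_1$ uniform in $n$. The Lipschitz nature of $d$ and the possible presence of local maxima of $d$ with arbitrarily small height (pairs of points of $K$ arbitrarily close together) force a delicate interplay between the exponent $\alpha$, the mollification scale $n^{-1}$, and the floor $n^{-\alpha}$; the second and third derivative bounds are the most sensitive. Once the scale functions are in place, the remainder of the argument is a routine compactness and curvature-blowup analysis.
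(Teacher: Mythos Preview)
The paper does not actually prove this corollary; it is stated as a result from \cite{HW} and \cite{K1} that Theorem~\ref{MainTheorem} recovers, with no argument supplied. Your overall strategy---build scale functions $\lambda_n$ vanishing on $K$ in the limit, apply Theorem~\ref{MainTheorem}, extract a lamination---is the natural one and is presumably what the author has in mind.

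However, your specific construction of $\lambda_n$ has a genuine gap. The problem is not where $d$ is small, but at the local maxima of $d$ inside the gap intervals of $K$. Suppose $(a,b)$ is a component of $[0,1]\setminus K$ with midpoint $m$ and half-length $h=(b-a)/2$, so $d(\sigma)=h-|\sigma-m|$ near $m$. Mollifying at scale $1/n$ produces $\tilde d_n\approx h$ with $|\ddot{\tilde d}_n|\sim n$ in a window of width $\sim 1/n$ around $m$. Then $\lambda_n\approx h^\alpha$ there, and the dominant contribution to $\ddot\lambda_n$ is $\alpha\,\tilde d_n^{\,\alpha-1}\ddot{\tilde d}_n\sim h^{\alpha-1}n$, giving
\[
|\ddot\lambda_n\,\lambda_n|\;\sim\; h^{2\alpha-1}\,n,
\qquad
\lambda_n^{\epsilon}\;\sim\;h^{\alpha\epsilon}.
\]
For any fixed $h>0$ the left side blows up as $n\to\infty$, so the bound $|\ddot\lambda\,\lambda|\le C_1\lambda^\epsilon$ cannot hold with $C_1$ independent of $n$. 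The same failure occurs for the third-derivative bound. Your ``mollification zone'' analysis only covers the region $\tilde d_n\lesssim 1/n$ and misses these corners at macroscopic height.

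The fix is to smooth $d$ at a scale proportional to $d$ itself rather than at a uniform scale $1/n$. Concretely, on each gap $(a,b)$ replace $d$ by $h\,\chi((\sigma-m)/h)$ for a single fixed even $\chi\in C^\infty$ with $\chi(t)=1-|t|$ for $|t|\ge 1/2$; call the result $\tilde d$. Then $|\tilde d^{(k)}|\lesssim \tilde d^{\,1-k}$ uniformly over all gaps, and $\tilde d^{\,\alpha}$ satisfies \eqref{ScaleFunctionAssumptions} (with $\lambda$ allowed to vanish) for $\alpha\ge 1/(1-\epsilon)$ by the same computation you did. Now set $\lambda_n=\tilde d^{\,\alpha}+1/n$; since $\tilde d$ is fixed, the constants are automatically uniform in $n$, and the rest of your argument goes through.
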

 The simplest case of Theorem \ref{KHWTheorem} arises when the subset $K$ coincides the with the line segment itself, in which case a sequence of such embedded minimal disks is realized by helicoids of decreasing scale. Several simpler, non-trivial cases precede Theorem \ref{KHWTheorem},  including the main theorems of \cite{Kh}, \cite{D} and the original result of Colding-Minicozzi in \cite{CM4}, all of which used the Weierstrass Representation.  
 
 One of the motivations for approaching the problem of constructing embedded minimal disks and their limit laminations with PDE techniques is their great flexibility and direct control on the geometry of the resulting surfaces they provide, at least relative to the Weierstrass Representation. For example, we are able to construct, for each compact set $K$ as in the statement of Theorem \ref{KHWTheorem}, smooth families of minimal laminations singularities exactly on $K$ and in particular we show that all super-quadratic curvature blowup rates arise among such laminations:
 
 \begin{corollary} \label{ConvexBlowupCorollary}
 For each $\epsilon > 0$, there exists an embedded minimal disk $\Sigma$, embedded in the half cylinder $\{ x^2 + y^2 \geq 1, z \geq 0\}$ in $R^3$, and so that 
 \begin{align} \notag
(1/C) h^{-2(1 + \epsilon)} \leq \sup_{z \geq h}| A_\Sigma|^2 \leq C h^{-2(1 + \epsilon)}
 \end{align}
 \end{corollary}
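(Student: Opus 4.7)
The plan is to invoke Theorem \ref{MainTheorem} with a scale function whose local helicoidal length scale at height $\sigma$ is $\lambda(\sigma) \asymp \sigma^{1+\epsilon}$. Since the second fundamental form of the helicoid scales inversely to the ambient length, property (2) of that theorem will then force $|A_{\mathcal{H}^*}|^2 \asymp \lambda(\sigma)^{-2} \asymp \sigma^{-2(1+\epsilon)}$ near the axis, producing exactly the prescribed convex blowup rate.

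Concretely, I would take $\lambda(\sigma) := c\,\sigma^{1+\epsilon}$ for a small $c > 0$, smoothed off to a positive constant on a tiny interval near $\sigma = 0$ to meet the strict positivity required by the hypothesis. To verify the admissibility conditions \eqref{ScaleFunctionAssumptions}, take the exponent there (to avoid overloading notation with the corollary's $\epsilon$) to be $\eta := \epsilon/(1+\epsilon)$. Direct computation gives
\[
|\dot\lambda| = c(1+\epsilon)\sigma^{\epsilon}, \qquad |\ddot\lambda|\lambda = c^2(1+\epsilon)\epsilon\,\sigma^{2\epsilon}, \qquad |\dddot\lambda|\lambda^2 = c^3(1+\epsilon)\epsilon\,|\epsilon-1|\,\sigma^{3\epsilon},
\]
while $\lambda^\eta = c^\eta \sigma^\epsilon$. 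The ratios are $c^{1-\eta}(1+\epsilon)$, $c^{2-\eta}(1+\epsilon)\epsilon\,\sigma^\epsilon$, and $c^{3-\eta}(1+\epsilon)\epsilon|\epsilon-1|\sigma^{2\epsilon}$, all bounded on $[0,1]$ and in fact tending to zero as $c \to 0$. So the admissibility hypothesis holds with any prescribed $C_1$ once $c$ is small, while simultaneously $\lambda \leq c < C_0$.

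Theorem \ref{MainTheorem} thus produces an embedded minimal disk $\mathcal{H}^*$. For each $\sigma \in (0,1]$, part (2) identifies $\mathcal{H}^*$ near height $\sigma$ with a small perturbation of the standard helicoid at scale $\lambda(\sigma)$. Since the standard helicoid has $|A_{\mathcal{H}}|^2 = 2/(1+r^2)^2$, peaking on its axis with value $2$, unscaling gives pointwise $|A_{\mathcal{H}^*}|^2(p) \leq C\,\lambda(z(p))^{-2} \leq C'\,z(p)^{-2(1+\epsilon)}$. As $\sigma \mapsto \sigma^{-2(1+\epsilon)}$ decreases on $(0,1]$, taking the sup over $z \geq h$ yields $\sup_{z\geq h}|A_{\mathcal{H}^*}|^2 \leq C'\,h^{-2(1+\epsilon)}$. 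The matching lower bound follows by evaluating at the axis point over $z = h$, where $|A_{\mathcal{H}}|^2 \geq 2$ together with the scaling identity forces $|A_{\mathcal{H}^*}|^2 \geq (2/C)\,h^{-2(1+\epsilon)}$.

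The principal technical hurdle is the requirement $\lambda > 0$, which prevents the construction from reaching $\sigma = 0$ directly. I would dispatch it by instead applying the above with $\lambda_n(\sigma) := c\max(\sigma, 1/n)^{1+\epsilon}$ (mollified to smoothness), producing a sequence $\mathcal{H}^*_n$ whose curvatures are bounded, uniformly in $n$, on every slab $\{z \geq h\}$ by $C'\,h^{-2(1+\epsilon)}$. Standard compactness for embedded minimal surfaces with locally bounded curvature then extracts a smooth subsequential limit $\Sigma$ on $\{z > 0\}$ satisfying the required two-sided estimate.
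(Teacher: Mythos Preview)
Your approach is correct and is precisely the intended derivation: the paper states Corollary~\ref{ConvexBlowupCorollary} as a direct consequence of Theorem~\ref{MainTheorem} without giving an explicit proof, and the power-law choice $\lambda(\sigma)\asymp\sigma^{1+\epsilon}$ is the obvious scale function realizing the desired blowup rate, with the admissibility check in \eqref{ScaleFunctionAssumptions} going through exactly as you compute. Your limiting device with $\lambda_n(\sigma)=c\max(\sigma,1/n)^{1+\epsilon}$ to circumvent the strict positivity hypothesis at $\sigma=0$ is a clean way to handle that endpoint; the uniform curvature bounds on $\{z\geq h\}$ and the uniform smallness of the perturbation in Theorem~\ref{MainTheorem}(2) ensure both the upper and lower curvature estimates survive the limit.
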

 
The case $\epsilon = 0$ was treated in \cite{BK1}, in which a minimal lamination of the enterior of a positive cone was exhibited. A theorem of Meeks, Perez and Ros (\cite{MPR}) gives that no such lamination laminates a ball containing the singularity.  The statement of Corollary \ref{ConvexBlowupCorollary} above can be strengthened slightly to say  that minimal surfaces described arise as limits of properly embedded minimal disks.  

	Unlike the Weierstrass representation techniques of \cite{Kh}, \cite{D},\cite{K1}, and \cite{CM4}, the PDE techniques employed here should have several straightforward generalizations. Firstly, there seems to be no major obstruction to proving and analogue of  Theorem \ref{MainTheorem} in which the straight line is replaced by an arbitrary smooth  curve.  Secondly,  our techniques seem to apply more or less directly to singly periodic  minimal surfaces other than the helicoid. In particular, it seems quite possible that applying our technique to Scherk towers would yield some extremely pathological examples of embedded minimal surfaces; namely, embedded in a punctured ball, with infinite topology and 
with no smooth extension to the whole ball. 	
	
	 The precise regularity of the curve required for such a construction to work is no greater the a $C^3$ requirement, although we have not systematically addressed this question in this article. Work of Colding-Minicozzi in \cite{CM1}, \cite{CM2}, \cite{CM3}  famously shows that $K$ must be contained in a Lipshitz curve, and more recently B. White in \cite{W1} has strengthened the regularity to $C^1$.  Meeks and Weber in \cite{MW} have shown that every $C^{1, 1}$ curve arises as the blowup set. J. Bernstein and G. Tinaglia have considered problems relating limit to  laminations in \cite{BT}.  Constructions related to the present have been undertaken in \cite{BK1} and \cite{BK2}.

\section{Preliminaries} \label{Preliminaries}
\subsection{ Basic notation and conventions}
Throughout this paper we make extensive use of cut-off functions, and we adopt the following notation:  Let $\psi_0:R \to [0,1]$ be a smooth function such that
\begin{enumerate}
 \item $\psi_0$ is non-decreasing
\item $\psi_0 \equiv 1$ on $[1,\infty)$ and $\psi_0 \equiv 0$ on $(-\infty, -1]$
\item $\psi_0-1/2$ is an odd function.
\end{enumerate}
For $a,b \in R$ with $a \neq b$, let $\psi[a,b]:R \to [0,1]$ be defined by $\psi[a,b]=\psi_0 \circ L_{a,b}$ where $L_{a,b}:R \to R$ is a linear function with $L(a)=-3, L(b)=3$.
Then $\psi[a,b]$ has the following properties:
\begin{enumerate}
 \item $\psi[a,b]$ is weakly monotone.
\item $\psi[a,b]=1$ on a neighborhood of $b$ and $\psi[a,b]=0$ on a neighborhood of $a$.
\item $\psi[a,b]+\psi[b,a]=1$ on $R$.
\end{enumerate}

\subsection{Norms and H{\"o}lder spaces}
\begin{definition}
Given a function $u \in C^{j, \alpha} (D)$, where $D \subset R^m$, the   $(j, \alpha)$ \emph{localized H\"older norm} is given by
\begin{align*}
\| u\|_{j, \alpha} (p) : = \| u : C^{j, \alpha} (D \cap B_1 (p))\|.
\end{align*}
We let $C^{j, \alpha}_{loc} (D)$ denote the space of functions for which $ \|- \|_{j, \alpha}$ is  pointwise finite. 
\end{definition}

\begin{definition}
Given  a positive function $f: D \rightarrow R$,  we let the space $C^{j, \alpha} (D, f)$ be the space of functions for which the \emph{weighted norm} $\| -: C^{j, \alpha} (D, f)\|$  is finite, where we take
\begin{align*}
\| u : C^{j, \alpha} (D, f) \| : = \sup_{p \in D} f(p)^{-1} \| u\|_{j, \alpha} (p)
\end{align*}
\end{definition}

\begin{definition}
Let $\mathcal{X}$ and $\mathcal{Y}$ be two Banach spaces with norms $\| - : \mathcal{X}\|$ and $\| - : \mathcal{Y}\|$, respectively. Then $\mathcal{X} \cap \mathcal{Y}$ is naturally a Banach space with norm $\| -:  \mathcal{X} \cap \mathcal{Y}\|$ given by
\begin{align}\notag
\| f : \mathcal{X} \cap \mathcal{Y} \| = \| f: \mathcal{X}\| + \| f : \mathcal{Y}\|. 
\end{align}
\end{definition}
Let $\mathcal{X}$ be a Banach space with norm $\| - : \mathcal{X}\|$ and suppose $S \subset \mathcal X$. For convenience, throughout the paper we will sometimes write $\| - : S\|$, where for any $f \in S$ we simply let
\[
\|f:S\|:= \|f: \mathcal X\|.
\]

\subsection{Estimating  homogeneous quantitites} \label{EstimatingHomogeneousQuantities}

In this section we record a formalized procedure for producing estimates for quantities defined on immersions that scale homogeneously. The quantities and results which we record have already appeared in \cite{BK2}, and for this reason we omit the proofs and instead refer the reader to \cite{BK2} for details. 

Let $E$ be the Euclidean space  $E : = E^{(1)} \times E^{(2)} =   R^{3 \times 2} \times R^{3 \times 4}$. We denote points of $E$ by $ \underline{\nabla}  = (\nabla, \nabla^2)$, where
\begin{align*}
\nabla = (\nabla_1, \nabla_2) \quad \nabla^2 = (\nabla^2_{1  1}, \nabla^2_{2  2}, \nabla^2_{1  2}, \nabla^2_{2  1}).
\end{align*}
We then consider functions $\Phi(\underline{\nabla})$ on $E$ with the property 
\begin{align*}
\Phi (c \underline{\nabla}) = c^d \Phi (\underline{\nabla})
\end{align*}
for real numbers $c$ and $d$. We call such a function a \emph{homogeneous function of degree $d$}.   A  homogeneous degree $d$ function has the property that its $j^{th}$ derivative $D^{(j)} \Phi $ is homogeneous degree $d - j$.

Notice $E$ is just a Euclidean space so for any $V \in E$, we make the identification $T_VE =E$.
We extend this for each $k \in \mathbb Z^+$ and observe that $D^{(k)}\Phi(V):E^k \to R$. For clarity we provide the following definition.
\begin{definition}
Let $k \in \mathbb Z^+$, $V, W_1, \dots W_k \in E$. Then
\begin{equation}\notag
\left.D^{(k)}\Phi\right|_V(W_1 \otimes \dots \otimes W_k):=D^{(k)}\Phi(V)(W_1 \otimes \dots \otimes W_k).
\end{equation}For brevity, we denote the $k$-th tensor product of $W$ with itself by
\[
\otimes^{(k)} W:= W\otimes \dots \otimes W.
\]
\end{definition}
\begin{definition}
 Given an immersion $\phi : D \subset R^2 \rightarrow R^3$, we set $\underline{\nabla} [\phi] :  = (\nabla \phi, \nabla^2 \phi)$. A \emph{homogeneous quantity of degree $d$} on $\phi$ is then a function of the form $ \Phi[\phi] : = \Phi (\underline{\nabla}[\phi])$ for some homogeneous function $\Phi$ on $E$.
 \end{definition}
 
  We refer to a map $\underline{\nabla} (s, z): D \subset R^2 \rightarrow E$ as an \emph{immersion} if the quantity
\begin{align} \label{fraka_def}
\mathfrak{a} (\underline{\nabla}) = \mathfrak{a} (\nabla) : = 2 \sqrt{\det{\nabla^T\nabla}}/|\nabla|^2 
\end{align}
is everywhere non-zero, and otherwise we refer to it simply as a \emph{vector field}. 
  
\begin{definition}\label{TRdef}
 Given an immersion $\underline{\nabla}$ and a vector field $\mathcal{E}$, we  set 
\begin{align} \label{TR1}
R_{\Phi, \mathcal{E}}^{(k)} (\underline{\nabla}) : =  \int_0^{1} \frac{(1 - \sigma)^k}{k!} \left. D \Phi^{(k + 1)} \right|_{\underline{\nabla}(\sigma)} (\otimes^{(k + 1)} \mathcal{E}) d \sigma
\end{align}
where $\otimes^{(k)} \mathcal{E}$ denotes the $k$-fold tensor product of $\mathcal{E}$ with itself and where $\underline{\nabla} (\sigma) : = \underline{\nabla} + \sigma \mathcal{E}$.

When $\underline{\nabla}$ and $\mathcal{E}$ are of the form $\underline{\nabla}  = \underline{\nabla} \phi$ and $\mathcal{E}  = \underline{\nabla} V$ we write
 \begin{align} \notag
 R_{\Phi , V}^{(k)} (\phi) : = R_{\Phi , \mathcal{E} }^{(k)} (\underline{\nabla}).
 \end{align}
 \end{definition}
 
 Note that $R_{\Phi , \mathcal{E}} (\underline{\nabla}) $ is simply the order $k$ Taylor remainder so that:
 \begin{proposition}
 We have 
 \begin{align}\label{TR2}
 \Phi (\underline{\nabla} + \mathcal{E}) - \Phi(\underline{\nabla}) - \left. D\Phi\right|_{\underline{\nabla}} (\mathcal{E}) - \ldots - \frac{1}{k !}\left. D^{(k )} \Phi \right|_{\underline{\nabla}} \left(\otimes^{(k)}\mathcal{E} \right) = R^{(k)}_{\Phi, \mathcal{E}} (\underline{\nabla})
 \end{align}
 \end{proposition}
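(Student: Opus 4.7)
The statement is essentially Taylor's theorem with integral remainder, lifted from a scalar variable to the multilinear setting of $\Phi$ on the Euclidean space $E$. The plan is therefore to reduce to the one-variable case by restricting $\Phi$ to the affine line $\sigma \mapsto \underline{\nabla} + \sigma \mathcal{E}$ and then quote the classical Taylor formula.

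More concretely, I would define the scalar function $f : [0,1] \to \mathbb{R}$ by
\[
f(\sigma) := \Phi(\underline{\nabla}(\sigma)) = \Phi(\underline{\nabla} + \sigma \mathcal{E}),
\]
which is smooth because $\Phi$ is smooth (it is a polynomial or at least a smooth homogeneous function on the finite-dimensional space $E$) and $\sigma \mapsto \underline{\nabla} + \sigma \mathcal{E}$ is affine. The chain rule, together with the fact that the derivative of the affine map in $\sigma$ is the constant vector $\mathcal{E}$, gives
\[
f^{(j)}(\sigma) = \left. D^{(j)} \Phi \right|_{\underline{\nabla}(\sigma)} \bigl( \otimes^{(j)} \mathcal{E} \bigr)
\]
for each $j \geq 0$. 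In particular $f^{(j)}(0) = \left. D^{(j)} \Phi \right|_{\underline{\nabla}} \bigl( \otimes^{(j)} \mathcal{E} \bigr)$ and $f(1) = \Phi(\underline{\nabla} + \mathcal{E})$, $f(0) = \Phi(\underline{\nabla})$.

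Now I would apply the standard one-variable Taylor formula with integral remainder to $f$ on $[0,1]$:
\[
f(1) = \sum_{j=0}^{k} \frac{f^{(j)}(0)}{j!} + \int_0^1 \frac{(1-\sigma)^k}{k!} f^{(k+1)}(\sigma)\, d\sigma.
\]
Substituting the expressions for $f(0)$, $f(1)$, $f^{(j)}(0)$, and $f^{(k+1)}(\sigma)$ derived above, and recognizing the integral on the right as exactly the definition of $R^{(k)}_{\Phi,\mathcal{E}}(\underline{\nabla})$ from \eqref{TR1}, yields precisely the identity \eqref{TR2}.

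There is no serious obstacle here; the only mild bookkeeping is verifying the iterated chain rule formula for $f^{(j)}$, which follows by induction on $j$ using the multilinearity of $D^{(j)}\Phi$ together with the vanishing of the second (and higher) derivatives of the affine map $\sigma \mapsto \underline{\nabla} + \sigma \mathcal{E}$. Since this map has constant derivative $\mathcal{E}$ in every slot, each differentiation in $\sigma$ simply contracts one more copy of $\mathcal{E}$ into $D^{(j)}\Phi$ without generating any Faà di Bruno-type correction terms.
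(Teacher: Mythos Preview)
Your proposal is correct and matches the paper's treatment: the paper does not give a proof but simply notes that $R^{(k)}_{\Phi,\mathcal{E}}(\underline{\nabla})$ is by definition the order-$k$ Taylor remainder, which is exactly what your reduction to the one-variable integral remainder formula makes explicit.
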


\begin{proposition} \label{HQEstimates} There exists $\tilde \epsilon>0$ such that if $\underline{\nabla}: D \rightarrow E$ is an immersion and $\mathcal{E}: D \rightarrow E$ is a vector field satisfying
\begin{align} \notag
\|\mathcal{E}: C^{j, \alpha}(D,\mathfrak{a} (\nabla) |\nabla|)\| \leq C(j, \alpha)\tilde \epsilon , \quad \text{ and }\quad \ell_{j, \alpha} (\nabla) : =  \| \nabla: C^{j, \alpha}(D, |\nabla|)\|  < \infty,
\end{align}
then
\begin{align} \notag
\left\| R_{\Phi , \mathcal{E}}^{(k)} (\underline{\nabla}): C^{j, \alpha} (D, |\nabla|^d) \right\| \leq C(\Phi,  \ell_{j, \alpha}, \mathfrak{a},k )\left\| \mathcal{E} : C^{j, \alpha} (D, |\nabla| ) \right\|^{k+1}.
\end{align}
\end{proposition}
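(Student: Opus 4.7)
The plan is to reduce to a pointwise estimate, prove the $C^0$ case using homogeneity together with the smallness hypothesis, then bootstrap to $C^{j,\alpha}$ via the chain rule. Throughout, the key observation is that if $\tilde{\epsilon}$ is chosen small enough then the hypothesis $|\mathcal{E}|(p) \le C \tilde{\epsilon}\, \mathfrak{a}(\nabla(p))|\nabla(p)|$ forces the interpolated immersion $\underline{\nabla}(\sigma) = \underline{\nabla} + \sigma \mathcal{E}$ to satisfy $|\underline{\nabla}(\sigma)(p)| \sim |\nabla(p)|$ and $\mathfrak{a}(\underline{\nabla}(\sigma)(p)) \ge \mathfrak{a}(\nabla(p))/2$ uniformly in $\sigma \in [0,1]$. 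Indeed, $\mathfrak{a}|\nabla|$ measures precisely the distance in $E$ from $\nabla$ to the degenerate locus where the immersion becomes singular.

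First I would establish the pointwise $C^0$ version of the bound. Since $D^{(k+1)}\Phi$ is homogeneous of degree $d - k - 1$ and smooth on the open set $\{\mathfrak{a}(W) > 0\}$, restricting to the compact slice $\{|W| = 1,\, \mathfrak{a}(W) \ge \mathfrak{a}_0\}$ and rescaling yields $|D^{(k+1)}\Phi|_W| \le C(\Phi, \mathfrak{a}_0)|W|^{d-k-1}$. Combined with the preceding paragraph this gives
\begin{align*}
\left|\left.D^{(k+1)}\Phi\right|_{\underline{\nabla}(\sigma)}\bigl(\otimes^{(k+1)}\mathcal{E}\bigr)\right| \le C|\nabla|^{d-k-1}|\mathcal{E}|^{k+1} = C |\nabla|^d \left(\frac{|\mathcal{E}|}{|\nabla|}\right)^{k+1},
\end{align*}
which upon integration over $\sigma \in [0,1]$ yields the $C^0$ version of the target estimate.

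For the full $C^{j,\alpha}$ norm on $D \cap B_1(p)$ I would differentiate the integrand in the spatial variable up to order $j$ and then apply the $\alpha$-H\"older quotient at top order. Each spatial derivative of $\left.D^{(m)}\Phi\right|_{\underline{\nabla}(\sigma)}$ produces a factor of $D^{(m+1)}\Phi$ together with a derivative of $\underline{\nabla}(\sigma)$, so the full chain-rule expansion is a finite sum of schematic terms
\begin{align*}
\left.D^{(m)}\Phi\right|_{\underline{\nabla}(\sigma)}\bigl(D^{a_1}\underline{\nabla}(\sigma) \otimes \cdots \otimes D^{a_{m-k-1}}\underline{\nabla}(\sigma) \otimes D^{b_1}\mathcal{E} \otimes \cdots \otimes D^{b_{k+1}}\mathcal{E}\bigr)
\end{align*}
with $k + 1 \le m \le k + 1 + j$ and $\sum a_i + \sum b_i \le j$. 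Each such term is bounded by $C|\nabla|^{d-m}$ (from the homogeneity of $D^{(m)}\Phi$ combined with the lower bound on $\mathfrak{a}$), times a factor $|\nabla|^{k+1} \|\mathcal{E}: C^{j,\alpha}(D,|\nabla|)\|^{k+1}$ coming from the $\mathcal{E}$-factors, times a factor $|\nabla|^{m-k-1} \ell_{j,\alpha}(\nabla)^{m-k-1}$ from the $\underline{\nabla}(\sigma)$-factors. The powers of $|\nabla|$ telescope to exactly $|\nabla|^d$, matching the target weight; the H\"older seminorm piece is handled in parallel, with one H\"older quotient replacing the sup on a single factor.

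The principal obstacle is this bookkeeping: one must verify term by term that the exponents of $|\nabla|$ combine to $d$ and that only derivatives of $\Phi$ of order at most $j + k + 1$ are invoked, so that the constant $C$ depends only on finitely many derivatives of $\Phi$, on $\ell_{j,\alpha}$, and on a positive lower bound for $\mathfrak{a}$. This is precisely the Fa\`a di Bruno-type argument carried out in the homogeneous-quantity formalism of \cite{BK2}, which is why the author refers the reader there rather than reproducing it.
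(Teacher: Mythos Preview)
Your sketch is correct and matches the intended argument; the paper itself does not prove this proposition but omits it with an explicit referral to \cite{BK2}, so there is no in-paper proof to compare against. Your outline---homogeneity of $D^{(k+1)}\Phi$ to get pointwise control on a compact slice, smallness of $\mathcal{E}$ in the $\mathfrak{a}(\nabla)|\nabla|$-weighted norm to keep $\underline{\nabla}(\sigma)$ in the nondegenerate region, and Fa\`a di Bruno bookkeeping for the higher H\"older norms with the $|\nabla|$-powers telescoping to $|\nabla|^d$---is exactly the mechanism behind the result as developed in \cite{BK2}, and you correctly identify this at the end of your proposal.

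One minor point worth tightening if you flesh this out: the slice $\{|W|=1,\ \mathfrak{a}(W)\ge \mathfrak{a}_0\}$ you invoke is a subset of the unit sphere in $E = E^{(1)}\times E^{(2)}$, but $\mathfrak{a}$ sees only the $E^{(1)}$-component, so compactness requires also bounding $|\nabla|$ away from zero on this slice. This is harmless because the hypothesis $\ell_{j,\alpha}(\nabla)<\infty$ forces $|\nabla^2|\le \ell_{j,\alpha}|\nabla|$ (since $\nabla^2$ is a derivative of $\nabla$), hence $|\underline{\nabla}|\le C(\ell_{j,\alpha})|\nabla|$, which supplies the missing lower bound and also explains why the final weight is $|\nabla|^d$ rather than $|\underline{\nabla}|^d$.
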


\begin{comment}
\subsection{Eigenvalues of the spherical laplacian}
\begin{proposition} 
Let $D(\delta) \subset \mathbb{S}^2$ be the domain give by:
\begin{align}
D(\delta) : = \{ p \in \mathbb{S}^2 : d_{\mathbb{S}^2}(p, (0, 0, 1)) \geq \delta\} \cap \{z \geq 0 \}
\end{align}
and let $\mathcal{F} \subset W^{1, 2} (\mathbb{S}^2)$ be the space of functions such that: 
\begin{enumerate}
\item $f \in \mathcal{F}$ satisfies a Dirichlet condition along the boundary $\{z = 0 \}$:
\begin{align} \notag
 \left. f \right|_{\{ z = 0\}} \equiv 0 .
\end{align}
\item $f \in \mathcal{F}$ satisfies a Robin type boundary condition along the boundary curve $\{ d_{\mathbb{S}^2}(p, (0, 0, 1) = \delta\}$:
\begin{align} \notag
f_\eta +  \tan(\delta) f = 0
\end{align}
\end{enumerate}
The the lowest eigenvalue of $\Delta_{\mathbb{S}^2}$ on $\mathcal{F}$ is $2$, which corresponding eigenfunction equal to the $z$-coordinate function restricted to $\mathbb{S}^2$. Moreover, the spectrum  of  $\Delta_{\mathbb{S}^2}$ on $\mathbb{F}(\delta)$  depends continuously on $\delta$ and converge to spectrum of $\Delta_{\mathbb{S}^2}$
\end{proposition}
\end{comment} 
\section{Outline}
We fix a  positive function $\lambda(\sigma): (0, 1) \rightarrow R$  satisfying the conditions of Theorem \ref{MainTheorem} and set
\begin{align} 
z (\sigma) : = \int_0^{\sigma} \frac{d \sigma}{\lambda (\sigma)}.
\end{align}
Let  $\sigma (z)$ denote the inverse of $z (\sigma)$. Then for any object $\Phi(\sigma)$ depending on the parameter $\sigma$, we will throughout simply write
\begin{align} \notag
\Phi(z) : = \Phi \circ \sigma (z).
\end{align}
We will also use `` $\dot{}$ '' to denote derivation in  $\sigma$ and `` $'$ '' to denote the derivative in $z$. We observe that 
\begin{align} \notag
\Phi' (z) = \lambda (z) \dot{\Phi} (z).
\end{align}
The map
\begin{align} \notag
B(x,y,z) = \sigma (z) e_z + \lambda(z) x e_x + \lambda (z) x e_y 
\end{align}
is then easily seen to be a diffeomorphism of $R^3$ and we set
\begin{align} 
F^* (s, z): = B \circ F(s, z)
\end{align}
where $F(s, z)$ is the conformal parametrization of the helicoid given by
\begin{align}
F(s, z) : =  \cosh (s) \sin (z) e_x + \cosh (s) \cos(z) e_y + e_z.
\end{align} 

Our goal is then to find a graph over $G$ which gives an embedded minimal surface as in Theorem \ref{MainTheorem}. As a first step we need to estimate various geometric quantities on $G$ including the mean curvature, metric, and the stability operator

\begin{proposition}\label{FStarGeometry}
Let $g^*$, $\nu^*$, $A^*$, $|A^*|$, $\Delta^*$,  $\mathcal{L}$ and $H$  denote respectively the metric, unit normal,  second fundamental form, the length of the second fundamental form, Laplace-Beltrami operator , and  the stability operator and the mean curvature of $F^*$ . Then:
\begin{enumerate}
\item With $g^* = g_{s s} ds^2 + g_{zz} dz^2 + 2 g_{s z} ds dz$, we have
\begin{align} \notag
g_{s s} = \lambda^2 \cosh^2(s), \quad g_{z z} = \lambda^2 \left( \cosh^2(s) + \dot{\lambda}^2 \sinh^2(s)\right), \quad g_{s z} = \dot{\lambda}  \lambda^2 \sinh(s) \cosh(s)
\end{align}
\item We have that
\begin{align} \notag
 \nu^* =  - \cosh^{-1}(s)e_r(x)  + \tanh(s) e_z,
\end{align}
 where we have set $e_r(x) = \sin (x) e_x + \cos(x)e_y$. \\
\item  With $A^* = A_{s s} ds^2 + A_{zz} dz^2 + 2 A_{sz} ds dz $ we have
\begin{align} \notag
A_{s s} = 0, \quad A_{z z}  - \dot{\lambda} \lambda \tanh(s), \quad A_{s z}  = - \lambda.
\end{align}
\item It holds that 
\begin{align} \notag
\lambda^2 \cosh^{2} (s)\Delta^*  &  = (1 + \dot{\lambda}^2\tanh^2(s)) \partial_{s  s} + \partial_{z  z} - 2 \dot{\lambda} \tanh(s) \partial_{s  z}   \\ \notag
& \quad +  \left\{ 2 \dot{\lambda}^2 \tanh(s)\cosh^{ - 2}(s) - \ddot{\lambda} \lambda \tanh(s)  \right\}  \partial_s -   \dot{\lambda} \cosh^{-2} (s)  \partial_z. 
\end{align}
\item It holds that 
\begin{align}
 |A^*|^2 = \lambda^{-2} \cosh^{-4 }( s)   \left( 2 + 2\dot{\lambda}^2\tanh^2(s)\right). 
\end{align}
. \\

 \item Set 
 \begin{align} \notag
\tilde{ \mathcal{L}} = \Delta + 2 \cosh^{-2} (s)
 \end{align}
 where $\Delta = \partial^2_{s \,s} + \partial^2_{z \, z}$ is the flat laplacian on $R^2$. Then we can write
 \begin{align} \notag
\lambda^2 \cosh^{2} (s)  \mathcal{L}^*  =  \tilde{\mathcal{L}} + \mathcal{E},
 \end{align}
where $\mathcal{E}$ is the operator given explicitly by
\begin{align} \notag
\mathcal{E}(s, z) &  = 2 \dot{\lambda}^2 \sinh^2(s)  +  \dot{\lambda}^2\tanh^2(s)  \partial_{s  s} - 2 \dot{\lambda} \tanh(s) \partial_{s  z}   \\ \notag
& \quad +  \left\{ 2 \dot{\lambda}^2 \tanh(s)\cosh^{ - 2}(s) - \ddot{\lambda} \lambda \tanh(s)  \right\}  \partial_s -   \dot{\lambda} \cosh^{-2} (s)  \partial_z.
\end{align}

\item  $H^*  = - \dot{\lambda} \lambda \tanh(s) \cosh^{-2} (s)$.
\end{enumerate}
\end{proposition}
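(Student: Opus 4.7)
The proof is a direct computation in the coordinates $(s,z)$, exploiting the rotational structure of $F^*$. I would introduce the orthonormal frame
\[
e_r(z) = \sin z \, e_x + \cos z \, e_y, \qquad e_\theta(z) = \cos z \, e_x - \sin z \, e_y, \qquad e_z,
\]
and differentiate $F^* = B \circ F$ using the chain rule together with $d\sigma/dz = \lambda$ and $\lambda'(z) = \lambda \dot\lambda$. A short computation gives
\[
F^*_s = \lambda \cosh s \, e_r, \qquad F^*_z = \lambda \, e_z + \lambda \dot\lambda \sinh s \, e_r + \lambda \sinh s \, e_\theta.
\]
Pairwise inner products of these yield the metric coefficients in (1), and a small algebraic miracle gives $\det g^* = \lambda^4 \cosh^4 s$, with the two $\dot\lambda^2$ contributions cancelling. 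This identity simplifies much of what follows.

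For (2) I compute $F^*_s \times F^*_z$ and normalize. Since $F^*_s$ is parallel to $e_r$, the $\lambda\dot\lambda\sinh s\, e_r$ term of $F^*_z$ contributes nothing, which is precisely why the resulting unit normal carries no $\dot\lambda$ dependence. For (3), I differentiate once more and contract with $\nu^*$. The vanishing $A_{ss}=0$ is immediate because $F^*_{ss}$ is parallel to $e_r$, which is orthogonal to $\nu^*$. Items (5) and (7) then follow from $|A^*|^2 = g^{ij}g^{k\ell}A_{ik}A_{j\ell}$ and $H^* = g^{ij}A_{ij}$; the vanishing of $A_{ss}$ and the clean form of $\det g^*$ keep these short.

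The main work is (4). I invert $g^*$ to obtain
\[
g^{ss} = \frac{\cosh^2 s + \dot\lambda^2 \sinh^2 s}{\lambda^2 \cosh^4 s}, \quad g^{zz} = \frac{1}{\lambda^2 \cosh^2 s}, \quad g^{sz} = -\frac{\dot\lambda \sinh s}{\lambda^2 \cosh^3 s},
\]
and apply the divergence-form identity $\Delta^* = (\det g^*)^{-1/2}\partial_i(\sqrt{\det g^*}\, g^{ij}\partial_j)$. After multiplication by $\lambda^2 \cosh^2 s$, the principal symbol reproduces the stated second-order expression, and the first-order coefficients arise from $z$-derivatives of the entries of $g^{ij}$; in particular the $\ddot\lambda$ term in the coefficient of $\partial_s$ comes out of $\partial_z\bigl(\dot\lambda \sinh s/(\lambda^2 \cosh^3 s)\bigr)$ via $\lambda' = \lambda\dot\lambda$ and $\lambda'' = \lambda\ddot\lambda\lambda + (\lambda\dot\lambda)^2/\lambda$. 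Finally (6) is obtained by combining (4) with $|A^*|^2$ from (5), multiplying through by $\lambda^2 \cosh^2 s$, and isolating the deviation from the model operator $\tilde{\mathcal{L}} = \Delta + 2\cosh^{-2}(s)$.

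The main obstacle is the bookkeeping in (4): tracking how the $\sigma$-derivatives $\dot\lambda$ and $\ddot\lambda$ accumulate under $z$-differentiation of the inverse metric and $\sqrt{\det g^*}$, and verifying that the $\dot\lambda \equiv 0$ limit recovers the standard helicoid Jacobi operator $\Delta + 2\cosh^{-2}(s)$ as a consistency check. Everything else reduces to straightforward algebraic manipulation in the frame $\{e_r, e_\theta, e_z\}$.
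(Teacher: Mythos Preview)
Your proposal is correct and follows essentially the same route as the paper: compute the first and second derivatives of $F^*$ in the rotating frame $\{e_r, e_\theta, e_z\}$ (the paper writes $e_r' = e_\theta$), obtain $\nu^*$ from the cross product $F^*_s \times F^*_z$, read off $g^*$, $A^*$, $|A^*|^2$, $H^*$ directly, and derive $\Delta^*$ from the local divergence-form expression using $\det g^* = \lambda^4\cosh^4 s$. The paper organizes these as separate short lemmas rather than a single narrative, but the computations, the key simplification $\det g^* = \lambda^4\cosh^4 s$, and the observation that $\nu^*$ is $\dot\lambda$-independent all appear exactly as you describe.
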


\begin{definition} \notag
Given a function $u: R^2 \rightarrow R$ we set
\begin{align}
F^*[u] (s, z): = F^* (s, z)+ \lambda(z) u(s, z)\nu^*(s, z)
\end{align}
\end{definition}

From this and Proposition \ref{FStarGeometry} we get the immediate corollary:
\begin{proposition}\label{FStarGraphMC}
Let $u: R^2 \rightarrow R$ be a locally class $C^{2, \alpha}$ function on on $R^2$ and assume that 
\begin{align}
\| u : C^{2, \alpha} (R^2, \cosh(s))\| \leq \epsilon.
\end{align}
Then for $\epsilon > 0$ sufficiently small, $F^*[u]$ is a class $C^{2, \alpha}$ immersion and it holds that 
\begin{align}
\tilde{H}^*[u]  : = \lambda(z) \cosh^{2} (s) H [F^*[ u]] = \dot{\lambda}  \tanh(s) + \tilde{\mathcal{L}} u + \tilde{R}^*[u]
\end{align}
where $R^*$ satisfies the estimate
\begin{align} \notag
\| R^*[u] \|_{0, \alpha} \leq C\left( \dot{\lambda} \| u\|_{2, \alpha} +  \| u\|_{2, \alpha}^2. \right)
\end{align}
\end{proposition}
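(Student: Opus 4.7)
The plan is to view $\tilde H^*[u] = \lambda(z)\cosh^2(s)\,H[F^*[u]]$ as the first-order Taylor expansion of the mean curvature, regarded as a homogeneous quantity of degree $-1$ on the jet space $E$, about the base immersion $\underline\nabla F^*$ with increment $\mathcal E := \underline\nabla(\lambda u\nu^*)$. Applying the Taylor identity \eqref{TR2} with $k=1$ gives
\[
H[F^*[u]] \;=\; H[F^*] \;+\; DH\bigl|_{\underline\nabla F^*}(\mathcal E) \;+\; R^{(1)}_{H,\mathcal E}(\underline\nabla F^*).
\]
The classical first variation of mean curvature under a normal perturbation identifies the linear term as the Jacobi operator applied to the normal component, so $DH|_{\underline\nabla F^*}(\mathcal E) = \mathcal L^*(\lambda u)$. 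Multiplying through by $\lambda\cosh^2(s)$, item (7) of Proposition \ref{FStarGeometry} produces the leading term $\dot\lambda\tanh(s)$, and it remains only to identify $\tilde{\mathcal L}u$ in the linear term and absorb everything else into $\tilde R^*[u]$.

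For the linear part, I would expand $\mathcal L^*(\lambda u) = \lambda\,\mathcal L^* u + 2\nabla^*\lambda\cdot\nabla^* u + u\,\Delta^*\lambda$ via the product rule. Multiplying by $\lambda\cosh^2(s)$ and invoking the factorization $\lambda^2\cosh^2(s)\mathcal L^* = \tilde{\mathcal L} + \mathcal E$ from item (6) converts the first summand into $\tilde{\mathcal L}u + \mathcal E u$. The operator $\mathcal E$ displayed in item (6) has every coefficient carrying at least one factor of $\dot\lambda$, $\dot\lambda^2$, or $\ddot\lambda\lambda$; and since $\lambda$ depends only on $z$, with $\partial_z\lambda = \lambda\dot\lambda$ and $\partial_z^2\lambda = \lambda(\dot\lambda^2 + \lambda\ddot\lambda)$, the remaining product-rule terms $2\nabla^*\lambda\cdot\nabla^* u$ and $u\,\Delta^*\lambda$ likewise carry at least one such factor after multiplication by $\lambda\cosh^2(s)$. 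Invoking the scale-function assumptions \eqref{ScaleFunctionAssumptions}, every one of these linear-in-$u$ contributions is bounded pointwise in $C^{0,\alpha}$ by a constant multiple of $\dot\lambda\,\|u\|_{2,\alpha}$, after using the common bound $\lambda^\epsilon$ to absorb $\ddot\lambda\lambda$ and $\dot\lambda^2$.

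The Taylor remainder $\lambda\cosh^2(s)\,R^{(1)}_{H,\mathcal E}(\underline\nabla F^*)$ is then handled directly by Proposition \ref{HQEstimates} with $k=1$ and $d=-1$. Items (1) and (2) of Proposition \ref{FStarGeometry} supply $|\nabla F^*|\sim\lambda\cosh(s)$ and a uniform lower bound on $\mathfrak a(\nabla F^*)$, so that the weighted-norm hypothesis on $\mathcal E = \underline\nabla(\lambda u\nu^*)$ reduces, via a short computation, to the assumption $\|u:C^{2,\alpha}(R^2,\cosh(s))\|\le\epsilon$ which is exactly what is given. Proposition \ref{HQEstimates} then yields
\[
\bigl\|\lambda\cosh^2(s)\,R^{(1)}_{H,\mathcal E}\bigr\|_{0,\alpha} \;\le\; C\,\|u\|_{2,\alpha}^{2},
\]
producing the quadratic contribution to the advertised estimate.

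The main obstacle is the linear-term bookkeeping: one must verify that in the expansion of $\lambda\cosh^2(s)\,\mathcal L^*(\lambda u)$ the only summand that is not already of size $O(\dot\lambda + \ddot\lambda\lambda + \dot\lambda^2)$ is precisely $\tilde{\mathcal L}u$. This rests crucially on the fact that $\lambda$ depends only on $z$, so that every derivative of $\lambda$ contributes at least one $\dot\lambda$, and on the explicit identification of $\mathcal E$ in item (6) of Proposition \ref{FStarGeometry}. Once this identification is carried out, the smallness built into \eqref{ScaleFunctionAssumptions} together with the $\epsilon$-smallness of $u$ delivers the claimed remainder bound.
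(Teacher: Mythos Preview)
Your proposal is correct and follows essentially the same route as the paper: verify that $\underline\nabla F^*$ and $\mathcal E=\underline\nabla(\lambda u\nu^*)$ satisfy the hypotheses of Proposition~\ref{HQEstimates} (using the explicit computations in Proposition~\ref{FStarGeometry}), then invoke the Taylor-remainder machinery to bound the nonlinear piece.  The paper's own proof is a two-line appeal to Proposition~\ref{HQEstimates}; your write-up supplies the linear-term bookkeeping (the product-rule expansion of $\mathcal L^*(\lambda u)$ and the identification $\lambda^2\cosh^2(s)\mathcal L^*=\tilde{\mathcal L}+\mathcal E$) that the paper leaves implicit, but the underlying argument is the same.
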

Once Proposition \ref{FStarGraphMC} is established, it remains to study the linear problem for $\tilde{\mathcal{L}}$:
\begin{align}\label{NormalizedTotalLinearProblem}
\tilde{\mathcal{L}} u = \dot{\lambda} \tanh(s).
\end{align}
Most of the inhomogeneous term in (\ref{NormalizedTotalLinearProblem}) can be integrated directly: It is easily checked that $\tanh(s)$ is in the kernel of 
\begin{align}
\tilde{L} : = \partial^2_{s \, s} + 2 \cosh^{-2} (s).
\end{align}
 Variation of parameters then gives that  
 \begin{align}
 \tilde{L}^{-1} (f) : = \left( \int_0^s \tanh^{-2} (s')\int_0^{s'} \tanh(s'') f(s'') ds'' ds'  \right) \tanh(s)
 \end{align}
is an inverse for $\tilde{L}$  that preserves zero dirichlet condition at $s = 0$. Set:
\begin{align} \label{PeriodicSolution}
u_0(s, z) : = \tilde{L}^{-1} (\tanh(s))
\end{align}
Then we have directly:
\begin{proposition}
The following statements hold:
\begin{enumerate}
\item It holds that 
\begin{align} \notag
 \tilde{\mathcal{L}} \left(\dot{\lambda} u_0 \right) = \dot{\lambda} (z)\tanh(s) + \left( \dddot{\lambda} \lambda^2 + \ddot{\lambda}^2 \lambda  \right) z) u_0   : =  \dot{\lambda} (z)\tanh(s) + E,
 \end{align}
  where $E$ is determined implicitly. \\ 
\item The function $u_0$ satisfies the estimate:
\begin{align} \notag
 \|u \|_{j, \alpha}(s) \leq 1 + s^2.
\end{align}

\end{enumerate}
\end{proposition}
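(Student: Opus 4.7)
Part (1) is a direct computation exploiting the decomposition $\tilde{\mathcal{L}} = \tilde L + \partial_{zz}^2$, where $\tilde L = \partial_{ss}^2 + 2\cosh^{-2}(s)$ depends only on $s$. Since $u_0 = \tilde L^{-1}(\tanh(s))$ is a function of $s$ alone, one has
\begin{align}\notag
\tilde{\mathcal{L}}\bigl(\dot\lambda(z)\, u_0(s)\bigr) = \dot\lambda(z)\, \tilde L u_0 + (\partial_{zz}^2 \dot\lambda)\, u_0 = \dot\lambda(z)\tanh(s) + (\partial_{zz}^2 \dot\lambda)\, u_0,
\end{align}
using $\tilde L u_0 = \tanh(s)$ by construction. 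To identify $E$ it remains to rewrite the $z$-derivatives of $\dot\lambda$ in terms of $\sigma$-derivatives via the Outline identity $\Phi' = \lambda\dot\Phi$. Applied twice, this gives $\partial_z \dot\lambda = \lambda\ddot\lambda$ and then $\partial_{zz}^2\dot\lambda = \lambda'\ddot\lambda + \lambda\cdot\lambda\dddot\lambda = \dddot\lambda\lambda^2 + \ddot\lambda\dot\lambda\lambda$, which matches the claimed expression for $E$ (up to what appears to be a typographical slip in the displayed formula).

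For Part (2) the plan is to obtain a closed form for $u_0$ and then read off the bound. The essential observation is that $\int_0^{s'} \tanh^2(s'')\,ds'' = s' - \tanh(s')$, which follows directly from $\tanh^2 = 1 - \cosh^{-2}$. Substituting into the variation-of-parameters formula, the outer integrand reduces to $(s' - \tanh(s'))\coth^2(s')$. One integration by parts---using that $s' - \tanh(s') = O(s'^3)$ dominates the polar singularity of $\coth$ at $s' = 0$, together with the elementary antiderivative $\int \coth^2 \,ds' = s' - \coth(s')$---yields
\begin{align}\notag
u_0(s) = \tfrac{s^2}{2}\tanh(s) - s + \tanh(s).
\end{align}
Since $\tanh$ and all its derivatives are bounded on $R$, the closed form immediately gives $|u_0^{(k)}(s)| \leq C(1 + s^2)$ for every $k \geq 0$. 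Because $u_0$ depends only on $s$ and is smooth, the localized $C^{j,\alpha}$ norm on the unit ball $B_1(p)$ inherits the same bound, giving $\|u_0\|_{j,\alpha}(s) \leq C(1+s^2)$.

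The only real obstacle is spotting the telescoping structure $f(s') := s' - \tanh(s')$ with $f'(s') = \tanh^2(s')$ that collapses the iterated integral to an elementary expression; once this is recognized, every subsequent step is routine bookkeeping with hyperbolic identities, and no analytic subtlety remains because $u_0$ is smooth with polynomial growth.
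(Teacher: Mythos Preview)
Your argument is correct and supplies exactly the details the paper omits: the proposition is stated there as following ``directly'' with no proof given, so there is nothing to compare against. Your computation of $\partial_{zz}^2\dot\lambda = \dddot\lambda\lambda^2 + \ddot\lambda\dot\lambda\lambda$ is right and the paper's displayed $\ddot\lambda^2\lambda$ is indeed a typographical slip (note that only your version is controlled by the assumptions in (\ref{ScaleFunctionAssumptions}), which is what the subsequent estimate (\ref{RemainderTermGlobalEstimate}) needs); your closed form $u_0(s) = \tfrac{s^2}{2}\tanh(s) - s + \tanh(s)$ checks out and immediately yields the Part (2) bound.
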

Thus, we are left with solving the linear problem for the remainder term $E$. Using the convexity assumption (\ref{ScaleFunctionAssumptions}) we then have directly that 
\begin{align} \label{RemainderTermGlobalEstimate}
\|E : C^{0 , \alpha}(R^2, \lambda^\epsilon(1+ s^2)) \| \leq C C_1.
\end{align}
Since the $s$ parameter is approximately the logarithmic radial distance of $\lambda^{-1}(z)F^*(z, s)$ from the $z$ axis, the domain of our graph must contain at least the set  $\{(s, z) : |s| \leq \log (c/\lambda (z)) \}$ if we wish to obtain minimal surfaces in tubes of fixed radius independent of $\lambda$. We in fact include much more and find solutions on the domain
\begin{align} \label{LambdaDef}
\Lambda : = \{ (s, z): |s| \leq \underline{\ell} (z)\}, \quad \underline{\ell} (z) : =  \lambda^{- \tau \epsilon}(z)
\end{align}
where $\tau$ is a small positive constant to be determined.  Then with $\epsilon_0 : = (1- 2 \tau) \epsilon$, the definition of $\Lambda$ and the estimate for $E$ in (\ref{RemainderTermGlobalEstimate}) give 
\begin{align} \label{ELambdaWeightedEstimate}
\|E : C^{0, \alpha} (\Lambda, \lambda^{\epsilon_0}) \| : = \gamma < \infty.
\end{align}
Our main invertibility statement is then: 
\begin{proposition} \label{MainInvertibilityStatement}
Let $E: \Lambda \rightarrow R$ be a locally class $C^{0, \alpha}$ function satisfying the estimate in (\ref{ELambdaWeightedEstimate}). Then  there is a function $u: \Lambda \rightarrow R$ such that 
\begin{enumerate}
\item It holds that 
\begin{align} \notag
\tilde{\mathcal{L}} u = E
\end{align}

\item Set $\epsilon_2 = (1-  20\tau) \epsilon_0$. Then $u$ satisfies the estimate:
\begin{align} \notag
\| u : C^{2, \alpha} (\Lambda, \lambda^{\epsilon_1})\| \leq  C \|E : C^{0, \alpha} (\Lambda, \lambda^{\epsilon_0}) \|
\end{align}
\end{enumerate}
\end{proposition}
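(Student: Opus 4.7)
The plan is to construct an approximate inverse (parametrix) for $\tilde{\mathcal{L}}$ using the factorization $\tilde{\mathcal{L}} = \tilde{L} + \partial_{zz}$ and then close via a Neumann series. The smallness that makes the series converge is supplied by the scaling assumptions (\ref{ScaleFunctionAssumptions}): since $\partial_z = \lambda \partial_\sigma$, every derivative in $z$ of a $\lambda$-dependent quantity carries at worst a factor of $\lambda^\epsilon$, which is small precisely where $\lambda$ is small.

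First, freezing $z$, I would use the one-dimensional variation-of-parameters inverse of $\tilde{L}$ constructed in (\ref{PeriodicSolution}), corrected by an appropriate multiple of the kernel element $\tanh(s)$ so as to impose a Dirichlet condition at $s = \pm \underline{\ell}(z)$ on each horizontal slice. Set $Pf(s,z) := [\tilde{L}^{-1} f(\cdot,z)](s)$. Because the second independent solution of $\tilde{L}u = 0$ grows linearly in $s$ and the variation-of-parameters formula integrates up to $\underline{\ell}$, the pointwise output of $P$ may grow at most quadratically in $s$, i.e., by a factor of at most $\underline{\ell}^2 = \lambda^{-2\tau\epsilon}$ on $\Lambda$. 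Combined with interior Schauder estimates on unit balls (a standard device in the weighted framework of Proposition \ref{HQEstimates}), this gives
\[
\| P f : C^{2,\alpha}(\Lambda, \lambda^{\epsilon_0 - 2\tau\epsilon}) \| \leq C \| f : C^{0,\alpha}(\Lambda, \lambda^{\epsilon_0}) \|.
\]

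Second, I would estimate the error $\tilde{\mathcal{L}}(PE) - E = \partial_{zz}(PE)$. Differentiating the explicit Green's representation of $P$ in $z$, each derivative lands on either the coefficient $\lambda(z)$, its derivatives, or the moving integration endpoint $\underline{\ell}(z) = \lambda^{-\tau\epsilon}(z)$; in every case (\ref{ScaleFunctionAssumptions}) yields an additional factor of $\lambda^\epsilon$, modulo a small loss of order $\tau \epsilon$ coming from differentiating the weight. Iterating twice produces a bound of the form $\| K \| \leq C \lambda^{\epsilon - O(\tau)\epsilon}$ for the remainder operator $K := \tilde{\mathcal{L}} P - I$ on the weighted space, which is a contraction once $\tau$ is chosen small enough relative to $\epsilon$. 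The Neumann series $u := P \sum_{k \geq 0} (-K)^k E$ then converges and solves $\tilde{\mathcal{L}} u = E$, with the final exponent $\epsilon_1 = (1 - 20\tau)\epsilon_0$ absorbing the cumulative weight loss from iteration.

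The main obstacle is bookkeeping the interplay of three effects that act against us: the polynomial-in-$s$ growth of the one-dimensional inverse $\tilde{L}^{-1}$, the $z$-dependence of the boundary $\partial\Lambda$ (since $\underline{\ell}(z)$ moves), and the fact that each iteration loses a bit of $\lambda$-weight. A clean way to handle the moving boundary is to solve first on exhausting rectangles $|z - z_0| \leq R$ with $s$-width $\underline{\ell}(z_0)$ frozen, obtain uniform weighted estimates by the scheme above with constants independent of $R$ and $z_0$, and pass to the limit via Arzel\`a--Ascoli on unit balls. Once the per-step weight gain $\lambda^{\epsilon - O(\tau)\epsilon}$ dominates the per-step weight loss $\lambda^{-2\tau\epsilon}$, the series sums and yields the stated estimate with $\epsilon_1 = (1-20\tau)\epsilon_0$.
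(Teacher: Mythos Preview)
Your parametrix scheme has a genuine gap: the operator $\partial_{zz}$ is \emph{not} a small perturbation of $\tilde{L}$ on the relevant function spaces, so the Neumann series cannot close. Concretely, when you differentiate $PE(s,z) = [\tilde{L}^{-1}E(\cdot,z)](s)$ in $z$, the derivative lands not only on $\lambda(z)$ and the moving endpoint $\underline{\ell}(z)$ but also on $E(\cdot,z)$ itself. The hypothesis only gives $E \in C^{0,\alpha}$ in the weighted sense, so $\partial_{zz}E$ is not even defined, and even if $E$ were $C^{2,\alpha}$ its $z$-derivatives would carry no extra factor of $\lambda^\epsilon$: the assumptions (\ref{ScaleFunctionAssumptions}) govern only derivatives of $\lambda$, not of an arbitrary inhomogeneous term. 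Thus your remainder $K = \tilde{\mathcal{L}}P - I = \partial_{zz}P$ is the same size as $P$, not contractive. This is not a bookkeeping issue; the factorization $\tilde{\mathcal{L}} = \tilde{L} + \partial_{zz}$ simply does not split into ``main part plus small correction.''

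The paper's route is essentially different and avoids this obstruction. Rather than treat $\partial_{zz}$ perturbatively, it inverts the \emph{full} two-dimensional operator $\tilde{\mathcal{L}}$ on strips $\Lambda(\ell,N)$ with a Robin condition in $s$ chosen so that the kernel is exactly $\tanh(s)$ (Proposition~\ref{WhatsInTheRobinKernel}). For data supported in $|z|\le 2\pi$ and orthogonal to $\tanh(s)$ along each slice, a one-dimensional Poincar\'e inequality (Proposition~\ref{PoincareInequality}) yields uniform-in-$N$ integral bounds, and a maximum principle then gives solutions on the infinite strip that \emph{decay} like $1/(1+|z|)$ away from the support (Proposition~\ref{RBCSolutionsWeightedControl}). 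The general $E$ is cut into pieces $E_j$ supported on strips of height $2\pi$ via a partition of unity in $z$; each piece is solved on a strip of width $2\ell_j$, and the resulting solutions $v_j$ are re-localized and summed. The smallness that drives the iteration is not a $\lambda^\epsilon$ gain from $z$-differentiation, but the $1/(1+|z|)$ decay of each $v_j$ combined with the bound $\rho_j \sim \lambda_j^{-\epsilon}$ on the number of overlapping supports (Propositions~\ref{ZGoodInterval} and~\ref{DualObjectProperties}), which together make the cutoff error strictly smaller in the $\lambda^{\epsilon_0}$-weighted norm (Proposition~\ref{FirstIterationFunctions}).
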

The primary difficulty in proving Theorem \ref{MainInvertibilityStatement} has to do with interactions of regions of $\Lambda$ corresponding to different  scales. If one assumes that the ratio
\begin{align} \notag
 \lambda_{\text{min}}/\lambda_{\text{max}}
\end{align}
 is uniformly bounded below, then Proposition \ref{MainInvertibilityStatement} becomes significantly easier. We essential prove Proposition \ref{MainInvertibilityStatement} as a corollary to Proposition \ref{StripInvertibilityWithRBC} below, which considers inhomogeneous terms supported on  strips of a fixed height and satisfying a strong orthogonality condition.
 
 \begin{definition} \label{LambdaSubDomains}
 The domain $\Lambda(a, b)$ is given as follows:
 \begin{align} \notag
 \Lambda (a, b): = \{|z| \leq b,  |s| \leq a \}.
 \end{align}
 \end{definition}
 
 \begin{proposition} \label{StripInvertibilityWithRBC}
Let $E$ be a class $C^{0, \alpha}$ function supported on $\Lambda(\ell, 2 \pi)$ and satisfying
\begin{align} \notag
\int_0^\ell E (s , z) \tanh(s) ds = 0.
\end{align}
Then, given $N > 2 \pi$ there is function $u_N : \Lambda(\ell, N  ) \rightarrow R$ such that 
\begin{enumerate}
\item \label{StripSolEq}  It holds that
\begin{align} \notag
\tilde{\mathcal{L}} u_N : = E.
\end{align}
\item \label{SolutionRobinBoundaryCondition} For $|z| \leq N$, $u(s, z)$ satisfies the Robin boundary condition
\begin{align} \notag
\partial_su_N (\ell, z) \tanh(\ell) - u_N(\ell, z)/\cosh^2(\ell) = 0.
\end{align}

\item \label{SolutionNeumanBoundaryCondition} $u_N$ satisfies the Neuman boundary condition
\begin{align} \notag
\partial_z u_N(s, \pm N) = 0.
\end{align}
\end{enumerate} 
 \end{proposition}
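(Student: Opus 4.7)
My plan is to prove Proposition~\ref{StripInvertibilityWithRBC} by separation of variables in $s$: reduce the 2D PDE $\tilde{\mathcal{L}} u_N = E$ on $\Lambda(\ell, N)$ to a countable family of 1D ODEs in $z$, one per Sturm--Liouville eigenmode of $\tilde L = \partial_{ss} + 2\cosh^{-2}(s)$ on $[-\ell, \ell]$ with the Robin boundary condition from item~(\ref{SolutionRobinBoundaryCondition}). The key 1D computation is that the substitution $u = v\tanh(s)$ converts the Robin condition for $u$ into a pure Neumann condition for $v$ and rewrites $\tilde L u = f$ in divergence form $(\tanh^2(s)\, v'(s))' = \tanh(s)\, f(s)$. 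Integrating on $[0,\ell]$ with $v'(\ell) = 0$ and using regularity at $s = 0$ (where the weight $\tanh^2$ degenerates) forces the solvability condition $\int_0^\ell \tanh(s)\, f(s)\,ds = 0$, which is exactly the orthogonality in the hypothesis. So $\tanh(s)$ spans the kernel of $\tilde L$ with Robin BC (the even solution $s\tanh s - 1$ of $\tilde L u=0$ does not satisfy the Robin BC), and the remaining Sturm--Liouville eigenfunctions $\phi_j$ have nonzero eigenvalues $\lambda_j$.

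Expanding $E(s,z) = \sum_j E_j(z)\phi_j(s)$ and seeking $u_N = \sum_j u_j(z)\phi_j(s)$, the equation $\tilde{\mathcal{L}} u_N = E$ decouples into the family
\begin{equation*}
u_j''(z) + \lambda_j\, u_j(z) = E_j(z), \qquad z \in [-N,N], \qquad u_j'(\pm N) = 0.
\end{equation*}
The orthogonality forces $E_0 \equiv 0$, so we take $u_0 \equiv 0$. For $j \ge 1$ the 1D Neumann problem is solved via the explicit Green's function of $\partial_{zz} + \lambda_j$ on $[-N,N]$ whenever $-\lambda_j$ avoids the Neumann spectrum $\{(k\pi/(2N))^2 : k \ge 0\}$. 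Reassembling $u_N = \sum_j u_j \phi_j$ and applying standard elliptic regularity yields the $C^{2,\alpha}$ solution; both boundary conditions hold by construction.

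The main obstacle is handling possible resonances: when $-\lambda_j$ coincides with a Neumann eigenvalue on $[-N,N]$ for some $j \ge 1$, the 1D problem has a nontrivial kernel. Since $\tilde L$ has only finitely many negative eigenvalues (by comparison with the P\"oschl--Teller operator $\partial_{ss} + 2\cosh^{-2}(s)$ on the real line, whose discrete spectrum consists of a single bound state), only finitely many resonances can occur. Because $E$ is compactly supported in $|z| \le 2\pi$ strictly inside $[-N,N]$, each obstruction can be absorbed by adding an explicit harmonic mode of $\partial_{zz} + \lambda_j$ supported outside $\mathrm{supp}(E)$, without disturbing the equation there. Convergence of the Sturm--Liouville expansion in the H\"older scale is routine given the $C^{0,\alpha}$ regularity and compact $z$-support of $E$.
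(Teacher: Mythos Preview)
Your separation in $s$ and the reduction via $u=v\tanh(s)$ correctly identify the one-dimensional kernel and the solvability condition for $\tilde L$ with the Robin boundary condition. The paper takes a different but closely related tack: it expands in Fourier modes in $z$, shows by an ODE comparison against $\tanh(s)$ (Proposition~\ref{WhatsInTheRobinKernel}) that the full two-dimensional kernel of $\tilde{\mathcal L}$ with the stated boundary conditions is exactly $\mathrm{span}\{\tanh(s)\}$, and then simply invokes Fredholm theory for existence and elliptic regularity.

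Your treatment of resonances, however, has a genuine gap. If some $\lambda_j>0$ coincided with a Neumann eigenvalue $(k\pi/(2N))^2$ on $[-N,N]$, the one-dimensional problem $u_j''+\lambda_j u_j=E_j$, $u_j'(\pm N)=0$ would be obstructed by $\int_{-N}^N E_j(z)\cos\!\big(k\pi(z+N)/(2N)\big)\,dz$, and compact support of $E_j$ in $|z|\le 2\pi$ does not make this vanish. Your proposed cure of ``adding a harmonic mode of $\partial_{zz}+\lambda_j$ supported outside $\mathrm{supp}(E)$'' cannot work: nonzero solutions of $u''+\lambda_j u=0$ are global sinusoids with no compact support, and in any case adding kernel elements never removes a Fredholm obstruction to solvability. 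The appeal to the P\"oschl--Teller bound-state count on the line is also off target (the line bound state is $1/\cosh(s)$, not $\tanh(s)$, and the finite-interval Dirichlet--Robin spectrum is governed by a different mechanism). The correct resolution, and what you should actually prove, is that there are \emph{no} resonances: for every $\kappa^2>0$, a solution of $\phi''+(2\cosh^{-2}(s)-\kappa^2)\phi=0$ with $\phi(0)=0$ necessarily satisfies $\phi'(\ell)/\phi(\ell)>\tanh'(\ell)/\tanh(\ell)$ and hence violates the Robin condition at $s=\ell$; this is the comparison argument of Proposition~\ref{WhatsInTheRobinKernel}, and it shows every nonzero eigenvalue $\lambda_j$ of $\tilde L$ is strictly negative. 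With $\lambda_j<0$ the homogeneous Neumann problem on $[-N,N]$ has only the trivial solution, and your mode-by-mode construction then goes through cleanly without any resonance discussion.
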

 
 An immediate consequence of Proposition \ref{StripInvertibilityWithRBC}  is that the orthogonality condition on $E$ is inherited by the solution $u_N$. This implies a one dimensional  Poincare inequality for $u_N$ along lines that gives  uniform control on the solutions $u_N$ in $N$.
 
 \begin{proposition} \label{RBCSolutionsIntegralControl}
 There is a universal constant $C$ so that the function $u_N$ in the statement of Proposition \ref{StripInvertibilityWithRBC} satisfies the following estimate:
 \begin{align} \notag
 \| u_N : L^1 (\Lambda(\ell, N)) \|, \quad \| \partial_s u_N : L^1 (\Lambda(\ell, N)) \| \leq C \ell^2 \| E : C^{0, \alpha}(\Lambda(\ell, 2 \pi), 1)\|.
 \end{align}
 \end{proposition}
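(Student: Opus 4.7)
The argument rests on showing the orthogonality $\int_0^\ell E\tanh\,ds = 0$ is inherited by $u_N$, exploiting this via a one-dimensional Poincar\'e inequality in $s$, and closing with an ODE bound on the $z$-averaged solution. Specifically, setting $h(z) := \int_0^\ell u_N(s,z)\tanh(s)\,ds$ and multiplying $\tilde{\mathcal L}u_N = E$ by $\tanh(s)$, integration over $s \in [0, \ell]$ yields $h''(z) + u_N(0, z) = 0$: the right-hand side vanishes by hypothesis, $\int_0^\ell\partial_z^2 u_N\tanh\,ds = h''(z)$, and two integrations by parts on the $\tilde L u_N$ term, using $\tilde L\tanh = 0$, the Robin BC at $s=\ell$, and $\tanh(0)=0$, leave precisely the boundary term $u_N(0,z)$. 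The odd-in-$s$ symmetry enforced by the construction of Proposition \ref{StripInvertibilityWithRBC} gives $u_N(0,z) = 0$; the Neumann conditions $\partial_z u_N(s,\pm N) = 0$ then force $h$ to be constant, and the normalization of $u_N$ modulo the kernel element $\tanh(s)$ of $\tilde{\mathcal L}$ (with these BCs) forces $h \equiv 0$.

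Since $\tanh > 0$ on $(0,\ell]$, the constraint $h(z)=0$ forces $u_N(\cdot,z)$ to vanish at some $s_0(z) \in (0,\ell)$ for every $z$, whence $|u_N(s,z)| \le \int_0^\ell|\partial_s u_N(s',z)|\,ds'$. Integrating in $s$, using odd symmetry for $s<0$, and then integrating in $z$ yields the key Poincar\'e inequality
\[
\|u_N\|_{L^1(\Lambda(\ell,N))} \le \ell\,\|\partial_s u_N\|_{L^1(\Lambda(\ell,N))}.
\]
It therefore suffices to prove $\|\partial_s u_N\|_{L^1(\Lambda(\ell,N))} \le C\ell\|E : C^{0,\alpha}(\Lambda(\ell,2\pi),1)\|$, whereupon both claimed bounds follow.

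For this final bound, average the PDE in $z$: the Neumann conditions give that $U(s) := \int_{-N}^N u_N(s,z)\,dz$ solves the ODE $\tilde L U = F$ with $F(s) := \int_{-2\pi}^{2\pi}E(s,z)\,dz$, subject to $U(0)=0$, Robin at $s=\ell$, and orthogonality $\int_0^\ell U\tanh\,ds = 0$ inherited from the first paragraph. Variation of parameters for $\tilde L$, using the independent kernel elements $\tanh(s)$ and $1 - s\tanh(s)$ (whose Wronskian is $-1$), combined with the two orthogonality conditions on $U$ and $F$, yields pointwise bounds $\|U\|_{L^\infty}, \|\partial_s U\|_{L^\infty} \le C\ell\|E\|_{C^{0,\alpha}}$ via direct integration. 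The principal remaining difficulty is converting these $z$-averaged $L^\infty$ estimates into genuine $L^1$ bounds on the full strip $\Lambda(\ell,N)$; here one exploits the spectral gap of $\tilde L$ above the $\tanh$-eigenspace, which is of order $1/\ell^2$, so that the non-kernel modes of $u_N$ decay exponentially in $z$ at rate $\sim 1/\ell$ outside $\mathrm{supp}(E)$, with total tail contribution of order $\ell$. Combining this with the inner-region pointwise bounds from variation of parameters and with the Poincar\'e inequality above produces the desired $C\ell^2$ estimates.
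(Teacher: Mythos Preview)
Your first two paragraphs (the inherited orthogonality of $u_N$, and the Poincar\'e reduction to bounding $\|\partial_s u_N\|_{L^1}$) are correct and parallel the paper's own preliminary steps. The gap is in the third paragraph. The $z$-averaging controls only $U(s)=\int_{-N}^N u_N(s,z)\,dz$, and since $|\partial_s U(s)|\le\int_{-N}^N|\partial_s u_N(s,z)|\,dz$, a bound on $\partial_s U$ gives a \emph{lower} bound for $\|\partial_s u_N\|_{L^1(\Lambda)}$, not the upper bound you need. You flag this as ``the principal remaining difficulty'' and appeal to exponential decay of non-kernel modes via an unweighted spectral gap of order $1/\ell^2$, but this is only a sketch: you have not controlled the mode amplitudes at the edge of $\mathrm{supp}(E)$, nor shown that the sum over all modes (with the appropriate eigenfunction norms) closes with the stated power of~$\ell$, nor handled the inner region $|z|\le 2\pi$ beyond its zero mode. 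Each of these is nontrivial, and without them the argument does not establish the estimate.

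The paper avoids mode decomposition entirely by an energy method: multiply $\tilde{\mathcal L}u_N=E$ by $u_N$ and integrate over the full strip, obtaining (after integration by parts and the boundary conditions) an identity bounding $\int_{-N}^N e_\ell(u_N(\cdot,z))\,dz$ by $\bigl|\int_{\Lambda(\ell,2\pi)}u_N E\bigr|$. The key input is a \emph{weighted} one-dimensional Poincar\'e inequality: for functions orthogonal to $\tanh(s)$ in $L^2([0,\ell],\cosh^{-2}(s)\,ds)$, the quadratic form $e_\ell$ dominates $\int_0^\ell(\cdot)'^2$ with a constant \emph{independent of $\ell$}---the exponential decay of the weight $\cosh^{-2}(s)$ is exactly what makes this gap uniform, in contrast to the $1/\ell^2$ unweighted gap you invoke. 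Writing $u_N=\alpha\tanh+g$ with $g$ weighted-orthogonal, one has $e_\ell(u_N)=e_\ell(g)$, and Cauchy--Schwarz against $E$ on its compact support then closes an $L^2$ bound on $\partial_s g$ over the whole strip; the $L^1$ estimates follow by H\"older.
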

 
From this, it then immediately follows that the functions $u_N$ are uniformly bounded in $N$ and  converge smoothly as $N \rightarrow \infty$ to a limiting function $u_\infty$ solving the linear problem for $E$ on the domain $\Lambda(\ell) : = \Lambda(\ell, \infty)$. This and a maximum principle implies the weighted H{\"o}lder estimate for $u_\infty$:
 \begin{proposition} \label{RBCSolutionsWeightedControl}
There is a constant $C$ so that 
 \begin{align} \notag
\left\| u_\infty : C^{2, \alpha} \left(\Lambda(\ell), \frac{1}{1 + |z|} \right)  \right\| \leq C \ell^3   \| E : C^{0, \alpha}(\Lambda(\ell, 2 \pi), 1)\|.
\end{align}
 \end{proposition}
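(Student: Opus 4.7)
The plan is to convert the $L^{1}$ bound of Proposition \ref{RBCSolutionsIntegralControl} into a weighted pointwise bound via an energy/maximum principle argument, and then invoke Schauder estimates to pass to $C^{2,\alpha}$. Since $\tilde{\mathcal{L}}$ has smooth, bounded coefficients on unit scales and the Robin boundary condition at $s = \pm\ell$ is of standard form, interior and boundary Schauder estimates on unit balls reduce the weighted $C^{2,\alpha}$ bound to the weighted $C^{0}$ bound
\[
|u_{\infty}(s,z)| \;\leq\; C\,\ell^{3}\,(1+|z|)^{-1}\,\|E : C^{0,\alpha}(\Lambda(\ell,2\pi),1)\|,
\]
because $E$ is supported in $|z|\leq 2\pi$, where the weight $1/(1+|z|)$ is of order one.

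The key structural feature is that $u_{\infty}$ is odd in $s$: the orthogonality hypothesis $\int_{0}^{\ell} E\tanh(s)\,ds = 0$ together with the symmetry of the construction of $u_{N}$ in Proposition \ref{StripInvertibilityWithRBC} force this, and on the half-strip $\{0\leq s\leq \ell\}$ the oddness provides a Dirichlet boundary condition at $s=0$ to pair with the Robin condition at $s=\ell$. For the Sturm--Liouville operator $-\partial_{ss} - 2\cosh^{-2}(s)$ on $[0,\ell]$ with this pair of boundary conditions, $\tanh(s)$ is the positive first eigenfunction with eigenvalue zero. Multiplying $\tilde{\mathcal{L}} u_{\infty} = E$ by $\tanh(s)$ and integrating in $s$ over $[0,\ell]$, using $\tilde L\tanh=0$ together with the matching Robin condition, yields $I''(z)=0$ for $I(z):=\int_{0}^{\ell} u_{\infty}(s,z)\tanh(s)\,ds$. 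The $L^{1}$ bound of Proposition \ref{RBCSolutionsIntegralControl} forces $I\equiv 0$, so in every $z$-slice $u_{\infty}$ lies in the orthogonal complement of $\tanh(s)$.

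On this orthogonal complement a Poincar\'e-type inequality yields a spectral gap of size $\sim \ell^{-2}$. An Agmon-type energy estimate for $f(z) := \int_{0}^{\ell} u_{\infty}^{2}\,ds$, which satisfies $f''(z) \geq c\,\ell^{-2}\,f(z)$ on $|z|>2\pi$, then produces exponential decay $|u_{\infty}(s,z)| \leq C\,\ell^{2}\,\|E\|\,e^{-c|z|/\ell}$, with the amplitude $\ell^{2}\,\|E\|$ coming from Moser's mean value inequality applied to the $L^{1}$ bound of Proposition \ref{RBCSolutionsIntegralControl}. The elementary bound $e^{-c|z|/\ell}\leq C\ell/(1+|z|)$ then converts this exponential decay into $(1+|z|)^{-1}$ decay with amplitude $C\ell^{3}\,\|E\|$, as required, and Schauder on unit balls upgrades to $C^{2,\alpha}$.

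The main technical obstacle is the uniform $\ell^{-2}$ spectral gap on the orthogonal complement of the zero eigenfunction $\tanh(s)$: both the attractive potential $-2\cosh^{-2}(s)$ and the Robin boundary term enter the Rayleigh quotient with unfavorable signs, so showing the quadratic form is positive with a definite gap requires careful cancellation that exploits the fact that $\tanh(s)$ is precisely the zero resonance of the Sturm--Liouville problem. This is the ingredient that drives the passage from the $L^{1}$-amplitude $\ell^{2}$ to the final weighted $C^{2,\alpha}$-amplitude $\ell^{3}$.
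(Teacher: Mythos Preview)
Your approach is essentially valid but takes a different route than the paper. The paper does not run an Agmon energy argument or appeal to a spectral gap at all. Instead it proves a maximum principle (Proposition \ref{LimitSolutionsMaximumPrinciple}): for $w>2\pi$, where $E\equiv 0$, one compares $u_\infty$ with multiples of the kernel element $\tanh(s)$ (which satisfies the same Robin condition) to rule out interior and lateral-boundary maxima, so the slice-sup function $\bar S(w):=\sup_{z\ge w}|u_\infty|$ is monotone decreasing in $|w|$. Since $u_\infty(0,z)=0$ one has $\bar S(w)\le \int_0^\ell |\partial_s u_\infty(s,w)|\,ds$, and integrating in $w$ against the $L^1$ bound of Proposition \ref{RBCSolutionsIntegralControl} gives $\int_{-\infty}^\infty \bar S(w)\,dw\le C\ell^3\|E\|$. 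A monotone nonnegative function with finite integral $M$ automatically satisfies $\bar S(w)\le M/|w|$, which is the claimed weighted bound; Schauder on unit balls then upgrades to $C^{2,\alpha}$.

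Your argument trades this barrier/monotonicity trick for a quantitative spectral gap and exponential decay. That gap \emph{does} follow from the paper's Poincar\'e-type inequalities (Propositions \ref{PoincareInequality}--\ref{UnweightedPoincare}): for $u$ unweighted-orthogonal to $\tanh(s)$ one has $e_\ell(u)=e_\ell(g)\ge c\int g'^2\ge c\ell^{-2}\int g^2\ge c\ell^{-2}\int u^2$, which is exactly the $\ell^{-2}$ gap you need for $f''\ge c\ell^{-2}f$. Your route thus yields the stronger exponential estimate $|u_\infty|\le C\ell^2\|E\|\,e^{-c|z|/\ell}$, which the paper explicitly alludes to (``a stronger estimate holds'') but does not prove. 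The cost is that you have to justify the amplitude $\ell^2\|E\|$: your appeal to ``Moser on the $L^1$ bound'' works, but only because the \emph{global} $L^1$ norm dominates the local one on any unit ball, so the De Giorgi--Nash--Moser local maximum estimate gives the pointwise bound; you should say this explicitly rather than leave it implicit. The paper's approach is shorter and avoids the spectral machinery entirely, while yours gives sharper decay and is more robust to perturbations of the operator.
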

Proposition \ref{MainInvertibilityStatement} then follows by applying Proposition \ref{RBCSolutionsWeightedControl} to pieces of the inhomogeneous term that lie in strips of fixed with, summing the resulting solutions, and using the convexity of the scale function $\lambda$ to show that only regions of comparable scale interact. The existence of the minimal graph over $F^*$ defined on $\Lambda$ then follows almost immediately. We formulate its existence  in terms of a fixed point  of the  mapping $\Psi$ on the ball
\begin{align} \label{FixedPointSet}
\Xi : = \{ u \in C^{2, \alpha}_{\text{loc}}(\Lambda) : \| u : C^{2, \alpha} (\Lambda, \lambda^{\epsilon_1}) \| \leq \zeta \},
\end{align}
where $\Psi$ is given by
\begin{align} \label{FixedPointMap}
\Psi (u) : = u - \tilde{\mathcal{L}}^{-1} \tilde{H}^*{u},
\end{align}
  where $\zeta$ is a constant to be determined, and where $\tilde{\mathcal{L}}^{-1}$ denotes the inverse to $\tilde{\mathcal{L}}$ between the weighted H{\"o}lder spaces described in Proposition \ref{MainInvertibilityStatement}. 
 \begin{proposition} \label{ContractionMapping}
For $\lambda$ sufficiently small, the function $\Psi (u)$ is defined on the set $\Xi$ and acts as a contraction. 
 \end{proposition}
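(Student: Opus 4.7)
The plan is to verify the self-map and contraction properties of $\Psi$ by feeding the decomposition of $\tilde{H}^*[u]$ from Proposition \ref{FStarGraphMC} into the inversion estimate of Proposition \ref{MainInvertibilityStatement}, and to exploit the quadratic structure of the remainder $\tilde{R}^*[u]$ to extract a small factor in $\lambda$.

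First I would check that $\Psi$ is well defined on $\Xi$ and maps $\Xi$ into itself. For $u\in\Xi$, Proposition \ref{FStarGraphMC} gives
\begin{equation*}
\tilde{H}^*[u]=\dot{\lambda}\tanh(s)+\tilde{\mathcal{L}} u+\tilde{R}^*[u],
\end{equation*}
with the quadratic remainder bound $\|\tilde{R}^*[u]\|_{0,\alpha}\leq C(\dot\lambda\|u\|_{2,\alpha}+\|u\|_{2,\alpha}^2)$. The assumption (\ref{ScaleFunctionAssumptions}) together with $\epsilon_0=(1-2\tau)\epsilon<\epsilon$ gives $\|\dot\lambda\tanh(s):C^{0,\alpha}(\Lambda,\lambda^{\epsilon_0})\|\leq CC_1$. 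For $u\in\Xi$, the pointwise control $\|u\|_{2,\alpha}(p)\leq\zeta\lambda^{\epsilon_1}(p)$, combined with $\epsilon+\epsilon_1-\epsilon_0>0$ and $2\epsilon_1-\epsilon_0=(1-40\tau)\epsilon_0>0$ for $\tau$ small, yields the weighted bound $\|\tilde{R}^*[u]:C^{0,\alpha}(\Lambda,\lambda^{\epsilon_0})\|\leq C(C_1\zeta+\zeta^2)\,C_0^{\delta}$ for some $\delta>0$, which is arbitrarily small once $C_0$ is small enough. Applying $\tilde{\mathcal{L}}^{-1}$, both $\tilde{\mathcal{L}}^{-1}(\dot\lambda\tanh(s))$ and $\tilde{\mathcal{L}}^{-1}(\tilde{R}^*[u])$ lie in $C^{2,\alpha}(\Lambda,\lambda^{\epsilon_1})$, and together with the $u-\tilde{\mathcal{L}}^{-1}\tilde{\mathcal{L}} u$ contribution (which vanishes provided the inverse is compatible with the ansatz defining $\Xi$) they place $\Psi(u)$ in that space with norm dominated by the universal bound on the leading piece $\dot\lambda\tanh(s)$. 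Choosing $\zeta$ to be twice this universal constant places $\Psi(u)$ in $\Xi$.

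For the contraction, I would compute
\begin{equation*}
\Psi(u)-\Psi(v)=(u-v)-\tilde{\mathcal{L}}^{-1}\bigl(\tilde{\mathcal{L}}(u-v)+\tilde{R}^*[u]-\tilde{R}^*[v]\bigr),
\end{equation*}
and observe that the two linear pieces cancel, reducing the difference to $-\tilde{\mathcal{L}}^{-1}\bigl(\tilde{R}^*[u]-\tilde{R}^*[v]\bigr)$. Since $\tilde{R}^*$ is the Taylor remainder of a quasilinear second-order operator, Proposition \ref{HQEstimates} yields an estimate of the form $\|\tilde{R}^*[u]-\tilde{R}^*[v]\|_{0,\alpha}\leq C\bigl(\dot\lambda+\|u\|_{2,\alpha}+\|v\|_{2,\alpha}\bigr)\|u-v\|_{2,\alpha}$, which in the weighted norm becomes $\|\tilde{R}^*[u]-\tilde{R}^*[v]:C^{0,\alpha}(\Lambda,\lambda^{\epsilon_0})\|\leq C(C_1+\zeta)\,C_0^\delta\,\|u-v:C^{2,\alpha}(\Lambda,\lambda^{\epsilon_1})\|$. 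Feeding this through Proposition \ref{MainInvertibilityStatement}, the contraction factor of $\Psi$ is at most $C(C_1+\zeta)C_0^\delta$, which is strictly less than $\tfrac12$ for $C_0$ small enough.

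The main analytical obstacle is verifying the cancellation $\tilde{\mathcal{L}}^{-1}\tilde{\mathcal{L}}(u-v)=(u-v)$ on $\Xi$, which requires checking that elements of $\Xi$ are compatible with the boundary behavior (the Robin side condition and the Neumann top/bottom condition) built into $\tilde{\mathcal{L}}^{-1}$ in Propositions \ref{StripInvertibilityWithRBC} and \ref{MainInvertibilityStatement}; equivalently, one reformulates $\Psi$ from the outset as $u\mapsto-\tilde{\mathcal{L}}^{-1}(\dot\lambda\tanh(s)+\tilde{R}^*[u])$ on that compatible subspace. Once this compatibility is in place the proof reduces to the $\lambda$-power bookkeeping above, which is routine given the positive gap between $\epsilon_0$ and $2\epsilon_1$.
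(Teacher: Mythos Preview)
Your argument is essentially correct and follows the same skeleton as the paper's: expand $\tilde H^*[u]$ via Proposition~\ref{FStarGraphMC}, feed the pieces through the inverse in Proposition~\ref{MainInvertibilityStatement}, and use the positive gap between $2\epsilon_1$ and $\epsilon_0$ to extract a small factor of $C_0$ from the remainder. The exponent bookkeeping you give is the right one.

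There is, however, a genuine difference in strategy. The paper does \emph{not} actually prove that $\Psi$ is a contraction in the Banach sense, despite the wording of Proposition~\ref{ContractionMapping}. Instead, in the proof of Theorem~\ref{PreciseMainTheorem} it builds an explicit reference element $v_0=u_0+u_1\in\Xi$ (with $u_0$ the hand-integrated solution of \eqref{PeriodicSolution} and $u_1$ the correction coming from Proposition~\ref{MainInvertibilityStatement}), estimates only the single difference $\Psi[u]-\Psi[v_0]=\tilde{\mathcal L}^{-1}\bigl(\tilde R^*[u]-\tilde R^*[v_0]\bigr)$, concludes $\Psi(\Xi)\subset\Xi$, and then invokes the \emph{Schauder} fixed point theorem rather than the contraction mapping principle. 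Your route is slightly stronger (it yields uniqueness of the fixed point), but it requires the Lipschitz estimate for $\tilde R^*$ that you quote from Proposition~\ref{HQEstimates}; the paper gets away with only the pointwise bound in Proposition~\ref{FixedPointSetRemainder}.

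The issue you flag at the end---that $\tilde{\mathcal L}^{-1}\tilde{\mathcal L}(u-v)=u-v$ needs $u,v$ to be compatible with the boundary conditions baked into the construction of $\tilde{\mathcal L}^{-1}$---is real, and the paper glosses over exactly the same point in its computation of $\Psi[u]-\Psi[v_0]$. Your suggested fix, namely to rewrite $\Psi$ as $u\mapsto-\tilde{\mathcal L}^{-1}\bigl(\dot\lambda\tanh(s)+\tilde R^*[u]\bigr)$ on the range of $\tilde{\mathcal L}^{-1}$, is the natural resolution in either approach.
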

 
 \section{Proof of Proposition \ref{FStarGeometry} and Corollary \ref{FStarGraphMC}}
 To prove Proposition \ref{FStarGeometry} we first record the first and second derivatives of $F^*$.
 
 \begin{lemma}
We have
\begin{align} \label{F^*Derivatives}
& F^*_s  =  \lambda   \cosh(s) \er , \quad F^*_z =  \lambda'  \sinh(s)\er + \lambda  \sinh(s) \erp  +  \lambda  \ez \\ \notag
& F^*_{s \,  s} =    \lambda \sinh(s) \er , \quad F^*_{s  \,  z} =  \lambda'  \cosh(s)\er +  \lambda \cosh(s) \erp, \\ \notag
&  F^*_{z \,   z} =  \lambda''  \sinh(s) \er  - \lambda  \sinh(s) \er + 2 \lambda' \sinh(s) \erp + \lambda'  \ez.
\end{align}
\end{lemma}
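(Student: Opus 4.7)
The plan is to compute each of the six derivatives directly from the definition $F^{*} = B\circ F$. Substituting the helicoid parametrization into $B$ and writing $\er(z) = \sin(z)\ex + \cos(z)\ey$, the surface parametrization reduces to
\begin{align}\notag
F^{*}(s,z) = \sigma(z)\,\ez + \lambda(z)\sinh(s)\,\er(z),
\end{align}
after which every claim in the lemma is just a product-rule calculation. The only auxiliary facts needed are the two frame identities $\partial_z\er = \erp$ and $\partial_z\erp = -\er$, together with the scalar derivative $\sigma'(z)$.

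The one preparatory step that is not entirely immediate is evaluating $\sigma'(z)$. Since $z(\sigma) = \int_0^{\sigma}\lambda^{-1}(\sigma')\,d\sigma'$ gives $\dot z = \lambda^{-1}$, the inverse function theorem yields $\sigma'(z) = \lambda(z)$. With this in hand the first derivatives drop out immediately: $\partial_s F^{*}$ involves only the $s$-dependent factor $\sinh(s)$ and produces the single term $\lambda\cosh(s)\,\er$, while $\partial_z F^{*}$ picks up three contributions, namely $\sigma'(z)\ez = \lambda\ez$, the derivative $\lambda'\sinh(s)\,\er$ from the coefficient, and $\lambda\sinh(s)\,\erp$ from the $z$-dependence of $\er$.

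The second derivatives are then obtained by iteration. The pure $s$-derivative $F^{*}_{s\,s}$ is trivial since $\partial_s$ annihilates $\lambda$, $\er$, and $\ez$. The mixed derivative $F^{*}_{s\,z}$ follows from applying $\partial_s$ to the three-term expression for $F^{*}_z$; only the $\sinh(s)$ factors differentiate, producing the two claimed $\cosh(s)$ terms. For $F^{*}_{z\,z}$, differentiating $F^{*}_z$ once more in $z$ yields five summands, of which the $-\lambda\sinh(s)\,\er$ term comes from the identity $\partial_z\erp = -\er$, the two $\erp$ contributions combine into $2\lambda'\sinh(s)\,\erp$, and the $\lambda'\ez$ term is $\partial_z(\sigma'(z)\ez)$. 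There is no real obstacle beyond careful bookkeeping in the orthonormal frame $\{\er,\erp,\ez\}$; the lemma is pure coordinate computation.
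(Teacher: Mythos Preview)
Your proposal is correct and matches the paper's approach: the paper states this lemma without proof, treating it as a routine computation, and your direct product-rule calculation in the rotating frame $\{\er,\erp,\ez\}$ together with the identity $\sigma'(z)=\lambda(z)$ is exactly what is intended. Your derivation of the explicit form $F^{*}(s,z)=\sigma(z)\ez+\lambda(z)\sinh(s)\er(z)$ is the right starting point (and in fact clarifies what appears to be a typo in the paper's displayed definition of $F$).
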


\subsection{The unit normal}

\begin{lemma} \label{UnitNormal}
The unit normal of ${F^*}$ is 
\begin{align}\label{normal}
\nu (s, z) = - \cosh^{-1}(s)\erp  + \tanh(s) \ez
\end{align}
\end{lemma}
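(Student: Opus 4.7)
The plan is to compute the unit normal directly as $\nu = \pm (F^*_s \times F^*_z)/|F^*_s \times F^*_z|$, read every ingredient off the derivative formulas \eqref{F^*Derivatives}, and fix the sign to match the orientation implicit in the statement. No further geometric input is needed.

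First I would record the cross-product table for the orthonormal frame $\{\er,\erp,\ez\}$. From the definition $\er = \sin(x)\ex + \cos(x)\ey$ and the resulting $\erp = \cos(x)\ex - \sin(x)\ey$, one expansion yields
\[
\er \times \erp = -\ez, \qquad \er \times \ez = \erp,
\]
which are the only brackets that appear. The key simplification is that $F^*_s = \lambda\cosh(s)\er$ is a pure $\er$ vector, so the $\er$-component of $F^*_z$, namely $\lambda'\sinh(s)\,\er$, gets killed in the cross product, and the derivatives of $\lambda$ drop out entirely. Geometrically this is exactly as expected: at fixed $z$ the image of $F^*$ is a rotated and rescaled piece of the standard helicoid, and the unit normal is a pointwise geometric quantity that cannot see $\lambda'$.

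Carrying out the remaining two terms gives
\[
F^*_s \times F^*_z \;=\; \lambda^2\cosh(s)\sinh(s)\,(\er \times \erp) + \lambda^2\cosh(s)\,(\er \times \ez) \;=\; \lambda^2\cosh(s)\bigl(\erp - \sinh(s)\,\ez\bigr),
\]
whose norm collapses via the identity $1 + \sinh^2(s) = \cosh^2(s)$ to $\lambda^2\cosh^2(s)$; this is also consistent with the fact that $F^*$ is conformal in $(s,z)$ at fixed $\sigma$ with conformal factor $\lambda\cosh(s)$. Dividing produces the unit vector $\cosh^{-1}(s)\erp - \tanh(s)\ez$, which is the negative of the stated $\nu$; choosing the opposite orientation gives exactly \eqref{normal}.

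The only real obstacle is bookkeeping on the sign, since both choices produce unit normals to the same surface. I would pin it down by checking compatibility with Proposition \ref{FStarGeometry}(3): the sign in $\nu$ must yield $A_{sz} = \nu \cdot F^*_{sz} = -\lambda$, and a direct dot product using $F^*_{sz} = \lambda'\cosh(s)\er + \lambda\cosh(s)\erp$ together with $\nu \cdot \erp = -\cosh^{-1}(s)$ gives precisely $-\lambda$ for the sign stated in \eqref{normal}, so the choice is forced.
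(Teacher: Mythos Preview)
Your argument is correct and is essentially identical to the paper's: compute $F^*_s \times F^*_z$ from \eqref{F^*Derivatives}, observe its norm is $\lambda^2\cosh^2(s)$, and normalize. The paper's wedge-product convention lands directly on the stated sign, whereas you arrive at the opposite and then pin it down via the compatibility check with $A_{sz}$; that extra verification is a nice touch but not a different idea.
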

\begin{proof}
This follows immediately as
\begin{align*}
F^*_s \wedge F^*_z & =  ( \lambda (z) \cosh(s) \er) \wedge( \lambda(z) \sinh(s)\er + e ^{\delta z} \sinh(s) \erp  +   \lambda (z) \ez ) \\ \notag
& =  \lambda^2 \sinh(s) \er \wedge \erp + \lambda^2 \cosh(s) \er \wedge \ez\\ \notag
&  = \lambda^2 \cosh(s) \sinh(s)\ez - \lambda^2 \cosh(s) \erp
\end{align*}
and thus
\begin{align*}
|F^*_s \wedge F^*_z|^2 = \lambda^4\cosh^4(s).
\end{align*}
\end{proof}

\subsection{The metric}

\begin{lemma} \label{Metric}
Let $g^* = g^* _{s  s} ds^2  + g^*_{z  z} dz^2 + 2 g^*_{s  z} ds \, d z$ be the metric of $g^*$. Then
\begin{align*}
&g^*_{s s} =  \lambda^2 \cosh^2(s), \quad g^*_{z  z} = \lambda^2 \left( \cosh^2(s) + \dot{\lambda}^2 \sinh^2(s)\right), \quad  g^*_{s z} = \dot{\lambda}  \lambda^2 \sinh(s) \cosh(s).
\end{align*}
\end{lemma}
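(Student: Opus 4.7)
The proof reduces to a mechanical computation from the formulas for $F^*_s$ and $F^*_z$ already recorded in the previous lemma, together with the orthonormality of the standard angular frame and one bookkeeping identity. The plan is the following.

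First I would note that the frame $\{\er, \erp, \ez\}$ is orthonormal in $R^3$: since $\er$ is a unit vector in the $\ex\ey$-plane depending on an angular variable, $\erp$ is its derivative in that variable and hence the unit vector perpendicular to $\er$ within the $\ex\ey$-plane, and both are perpendicular to $\ez$. This orthonormality kills all cross terms in the inner products and reduces each $g^*_{ij}=\langle F^*_i,F^*_j\rangle$ to a sum of products of the scalar coefficients in the $\{\er,\erp,\ez\}$-expansions of $F^*_s$ and $F^*_z$.

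Next, before reading off these coefficients, I would convert the $z$-derivative $\lambda'$ that appears in $F^*_z$ into the $\sigma$-derivative using the identity $\Phi'(z)=\lambda(z)\dot\Phi(z)$ recorded in the outline; in particular $\lambda'=\lambda\dot\lambda$. Substituting this into the expression $F^*_z=\lambda'\sinh(s)\er+\lambda\sinh(s)\erp+\lambda\ez$ from the previous lemma turns the $\er$-coefficient $\lambda'\sinh(s)$ into $\lambda\dot\lambda\sinh(s)$, while $F^*_s=\lambda\cosh(s)\er$ is already in the correct form.

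With these preparations the three computations are immediate: $g^*_{ss}=|F^*_s|^2=\lambda^2\cosh^2(s)$; the inner product $g^*_{sz}=\langle F^*_s,F^*_z\rangle$ receives a contribution only from the $\er$-components and evaluates to $\dot\lambda\lambda^2\sinh(s)\cosh(s)$; and $|F^*_z|^2=\lambda^2\dot\lambda^2\sinh^2(s)+\lambda^2\sinh^2(s)+\lambda^2$, which after the identity $\sinh^2(s)+1=\cosh^2(s)$ collapses to $\lambda^2\bigl(\cosh^2(s)+\dot\lambda^2\sinh^2(s)\bigr)$.

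There is no conceptual obstacle; the whole lemma is a direct bookkeeping verification. The only point that requires minor attention is to rewrite $\lambda'$ in terms of $\lambda\dot\lambda$ before expanding the inner products, since otherwise the coefficients of $\dot\lambda$ and $\dot\lambda^2$ in $g^*_{sz}$ and $g^*_{zz}$ would come out with the wrong powers of $\lambda$.
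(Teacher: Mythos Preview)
Your proposal is correct and is exactly the computation the paper has in mind: the paper's own proof consists of the single sentence ``This follows directly from (\ref{F^*Derivatives}),'' and you have simply written out that direct verification, including the orthonormality of $\{\er,\erp,\ez\}$ and the conversion $\lambda'=\lambda\dot\lambda$.
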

\begin{proof}
This follows directly from (\ref{F^*Derivatives}).
\end{proof}

As a direct consequence, 
\begin{align} \label{determinant}
|g^*| : = \det g^* =  \lambda^4 \cosh^4(s)
\end{align}
and the components of the dual metric are
\begin{align} \label{dual_metric}
F^{*s  s} = \lambda^{-2}\cosh^{-2 }(s) (1 + \dot{\lambda}^2 \tanh^2(s)), \quad F^{*z z} = \lambda^{-2}\cosh^{-2}(s), \quad F^{*s  z} = - \dot{\lambda} \lambda^{ - 2}  \tanh(s) \cosh^{-2} (s).
\end{align}

\subsection{The second fundamental form}

\begin{lemma} \label{2FF}
Let $A :  = A_{s  s} ds^2 + A_{z  z} d z^2 + 2 A_{s  z} ds\,  d z$ be the second fundamental form, and let $|A|^2$ be its length. Then we have
\begin{align*}
A_{s  s} = 0, \quad A_{z z} = - \dot{\lambda} \lambda \tanh(s), \quad A_{s z} = - \lambda
\end{align*}
and 
\begin{align*}
|A|^2 = \lambda^{-2} \cosh^{-4 }( s)   \left( 2 + 2\dot{\lambda}^2\tanh^2(s)\right).
\end{align*}
\end{lemma}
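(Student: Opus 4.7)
The proof amounts to a direct calculation from the derivative formulas already recorded in \eqref{F^*Derivatives} together with the unit normal from Lemma \ref{UnitNormal}. The plan is as follows.

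To get the components of $A$, I would use the identity $A_{ij} = \langle F^*_{ij}, \nu\rangle$ and exploit the fact that $\{\er,\erp,\ez\}$ is a pointwise orthonormal frame of $R^3$: since $\erp$ is the $z$-derivative of the unit vector $\er(z) = \sin(z)\ex+\cos(z)\ey$, it is a unit vector in the $xy$-plane perpendicular to $\er$. Because $\nu = -\cosh^{-1}(s)\erp + \tanh(s)\ez$ has no $\er$-component, every $\er$ contribution in $F^*_{ij}$ drops out immediately. This observation alone yields $A_{ss}=0$ (as $F^*_{ss}$ lies entirely in the $\er$ direction); $A_{sz}$ is then read off from the $\erp$ coefficient of $F^*_{sz}$, and $A_{zz}$ from the combined $\erp$ and $\ez$ coefficients of $F^*_{zz}$. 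The last substitution is to convert the $z$-derivatives $\lambda'$ appearing in \eqref{F^*Derivatives} into $\sigma$-derivatives via the relation $\lambda' = \lambda\dot\lambda$ recorded in the outline, which produces the stated form of $A_{zz}$.

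For $|A|^2$, the cleanest route is through the shape operator $B = g^{-1}A$: one uses $|A|^2 = \mathrm{tr}(B^2) = H^2 - 2K$ with $H = g^{ij}A_{ij}$ and $K = \det(A)/\det(g)$. The determinant piece is straightforward, since $A_{ss}=0$ forces $\det(A) = -A_{sz}^2 = -\lambda^2$, which combines with $\det(g) = \lambda^4\cosh^4(s)$ from \eqref{determinant} to produce the leading $\lambda^{-2}\cosh^{-4}(s)$ factor in the claimed formula. The $H^2$ contribution then follows by substituting the dual-metric entries from \eqref{dual_metric} together with the $A_{ij}$ just computed; the coupling through the off-diagonal term $g^{sz}$ is what supplies the $\dot\lambda^2 \tanh^2(s)$ correction in the final expression.

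The main obstacle is simply careful bookkeeping: several contributions to $H^2$ and to the products $g^{ij}g^{kl}A_{ik}A_{jl}$ partially cancel, and because both $g^{sz}$ and the mixed component $A_{sz}$ carry factors responsible for the $\dot\lambda$-dependent piece, the signs of the cross-terms must be tracked with care. Beyond this there is no conceptual difficulty; everything reduces to substituting \eqref{F^*Derivatives}, \eqref{normal}, \eqref{determinant}, and \eqref{dual_metric} into the definitions and exploiting the orthonormality of $\{\er,\erp,\ez\}$.
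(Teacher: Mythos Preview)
Your derivation of the components $A_{ij}=\langle F^*_{ij},\nu\rangle$ from \eqref{F^*Derivatives} and \eqref{normal}, exploiting the orthonormality of $\{\er,\erp,\ez\}$ and the conversion $\lambda'=\lambda\dot\lambda$, is exactly the paper's argument.

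For $|A|^2$ the two routes diverge slightly in organization. The paper does not pass through the shape operator or invoke the identity $\mathrm{tr}(B^2)=(\mathrm{tr}\,B)^2-2\det B$; instead it writes out the full contraction $g^{ik}g^{jl}A_{ij}A_{kl}$ as a Frobenius-type product of two $3\times 3$ arrays, listing the nonvanishing products $A_{ij}A_{kl}$ against the corresponding products $g^{ik}g^{jl}$ and summing. Your trace--determinant shortcut is equivalent and arguably cleaner: it replaces the nine-term bookkeeping by two scalar quantities, and the determinant piece comes essentially for free since $A_{ss}=0$. The paper's explicit layout, on the other hand, makes every individual contribution visible and requires no auxiliary identity. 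Both are straightforward linear algebra applied to the same data from \eqref{determinant} and \eqref{dual_metric}, so the difference is purely computational rather than conceptual.
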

\begin{proof}
We determine the components of the second fundamental form by using (\ref{F^*Derivatives}) and \eqref{normal}. To obtain the expression for the length of the second fundamental form, we write
\begin{align*}
|A|^2 &= 
\left(\begin{array}{ccc}
A_{s  z} A_{s  z} & A_{s  z} A_{z  s} & A_{s  z} A_{z  z} \\
& \\
A_{z  s} A_{s  z } &  A_{ z  s} A_{z  s} &A_{z s} A_{z z} \\
& \\
A_{z  z} A_{s  z} & A_{z z} A_{z  s} & A_{z  z} A_{z  z}
\end{array} \right) * 
 \left(\begin{array}{ccc}
g^{s  s} g^{z  z} & g^{s  z} g^{z  s} & g^{s  z} g^{z  z}\\
& \\
g^{z  s} g^{s  z}&  g^{z  z} g^{s  s} & g^{z  z} g^{s  z}\\
& \\
g^{z  s} g^{z  z} & g^{z  z} g^{z  s} & g^{z z} g^{z  z}
\end{array} \right) \\ \notag
\\
& = 
\lambda^2
\left(\begin{array}{ccc}
1 & 1 &  \dot{\lambda}  \tanh(s)\\ \notag
 \\
1 & 1 &  \dot{\lambda} \tanh(s)\\ \notag
\\
 \dot{\lambda} \tanh(s) &  \dot{\lambda} \tanh(s) & \dot{\lambda}^2 \tanh^2(s) 
\end{array} \right)
*
\frac{\lambda^{-4}}{\cosh^{ 4} (s) }
\left(\begin{array}{ccc}
1 + \dot{\lambda}^2\tanh^2(s) & \dot{\lambda}^2\tanh^2 (s) & - \dot{\lambda}\tanh(s) \\
\\
\dot{\lambda}^2\tanh^2 (s) & 1 + \dot{\lambda}^2\tanh^2(s) &  - \dot{\lambda}\tanh(s) \\
\\
-\dot{\lambda}\tanh(s) & - \dot{\lambda}\tanh(s) & 1
\end{array} \right) \\ \notag
\\
& = \cosh^{-4 }( s)  \lambda^{-2} \left( 2 + 2\dot{\lambda}^2\tanh^2(s)\right).
\end{align*}
\end{proof}

\begin{lemma} \label{MeanCurvature}
Let $H^*$ be the mean curvature of $F^*$. Then 
\begin{align*}
H^* (s, z) = - \dot{\lambda} \lambda \tanh(s) \cosh^{-2} (s).
\end{align*}
\end{lemma}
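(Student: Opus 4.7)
The plan is to prove Lemma \ref{MeanCurvature} by a direct application of the trace formula $H^* = g^{*ij} A^*_{ij}$. Every ingredient has already been computed: the coefficients of the second fundamental form appear in Lemma \ref{2FF}, and the components of the inverse metric are recorded in \eqref{dual_metric}.

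The first step is to observe that, because $A^*_{ss} = 0$, the trace collapses to just two surviving terms,
\[
H^* = g^{*zz}\,A^*_{zz} + 2\,g^{*sz}\,A^*_{sz},
\]
the factor of two reflecting the two symmetric off-diagonal entries. The second step is purely mechanical: one substitutes
\[
g^{*zz} = \lambda^{-2}\cosh^{-2}(s), \qquad g^{*sz} = -\dot{\lambda}\,\lambda^{-2}\tanh(s)\cosh^{-2}(s),
\]
\[
A^*_{zz} = -\dot{\lambda}\,\lambda\tanh(s), \qquad A^*_{sz} = -\lambda,
\]
and collects terms. Both surviving contributions share a common angular profile $\tanh(s)\cosh^{-2}(s)$ and a factor of $\dot{\lambda}$, but they enter with opposite signs, so the final expression emerges from a partial cancellation between them.

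There is no analytic content in this step; the only obstacle is the algebraic bookkeeping. In particular one must track the sign and the powers of $\lambda$ carried separately by the inverse metric and by the second fundamental form, and remember the additional factor of two attached to the off-diagonal contribution. A natural internal cross-check is that, after multiplication by $\lambda\cosh^2(s)$, the resulting expression must reproduce the leading $\dot{\lambda}\tanh(s)$ inhomogeneity extracted in the normalized mean-curvature equation of Proposition \ref{FStarGraphMC}.
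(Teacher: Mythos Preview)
Your approach is exactly what the paper intends: the lemma is stated without proof, so the trace formula $H^*=g^{*ij}A^*_{ij}$ using the data from Lemma~\ref{2FF} and \eqref{dual_metric} is the implicit argument. There is nothing to add on the method.

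One warning, though: if you actually carry out the arithmetic you outline, you will not reproduce the formula as printed. With $A^*_{ss}=0$ one gets
\[
g^{*zz}A^*_{zz}=\bigl(\lambda^{-2}\cosh^{-2}(s)\bigr)\bigl(-\dot\lambda\,\lambda\tanh(s)\bigr)=-\dot\lambda\,\lambda^{-1}\tanh(s)\cosh^{-2}(s),
\]
\[
2\,g^{*sz}A^*_{sz}=2\bigl(-\dot\lambda\,\lambda^{-2}\tanh(s)\cosh^{-2}(s)\bigr)(-\lambda)=2\,\dot\lambda\,\lambda^{-1}\tanh(s)\cosh^{-2}(s),
\]
so that $H^*=\dot\lambda\,\lambda^{-1}\tanh(s)\cosh^{-2}(s)$, not $-\dot\lambda\,\lambda\tanh(s)\cosh^{-2}(s)$. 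Your own cross-check catches this: multiplying by $\lambda\cosh^{2}(s)$ gives $\dot\lambda\tanh(s)$, matching Proposition~\ref{FStarGraphMC}, only with the $\lambda^{-1}$ version; the printed formula would instead give $-\dot\lambda\,\lambda^{2}\tanh(s)$. So the lemma as stated carries a sign and a power-of-$\lambda$ typo, and you should record the corrected expression rather than the one printed.
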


\subsection{The Laplace operator}
\begin{lemma} \label{LaplaceOperator}
Let $\Delta^*$ be the Laplace operator on $F^*$. Then 
\begin{align*}
\lambda^2 \cosh^{2} (s)\Delta_{g}  & = (1 + \dot{\lambda}^2\tanh^2(s)) \partial_{s  s} + \partial_{z  z} - 2 \dot{\lambda} \tanh(s) \partial_{s  z}  \\ \notag
&\quad +  \left\{ 2 \dot{\lambda}^2 \tanh(s)\cosh^{ - 2}(s) - \ddot{\lambda} \lambda \tanh(s)  \right\}  \partial_s -   \dot{\lambda} \cosh^{-2} (s)  \partial_z.
\end{align*}
\end{lemma}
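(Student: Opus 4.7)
The plan is to apply the coordinate formula for the Laplace-Beltrami operator,
\begin{align*}
\Delta^* f \;=\; \frac{1}{\sqrt{|g^*|}}\,\partial_i\!\left(\sqrt{|g^*|}\,g^{*ij}\,\partial_j f\right),
\end{align*}
feeding in the quantities that have already been computed in Lemma \ref{Metric} and in the consequences \eqref{determinant}, \eqref{dual_metric}. Since $\sqrt{|g^*|}=\lambda^2\cosh^2(s)$, the products $\sqrt{|g^*|}\,g^{*ij}$ simplify dramatically: the $\lambda^2\cosh^2(s)$ factors cancel against the $\lambda^{-2}\cosh^{-2}(s)$ factors in the dual metric, leaving
\begin{align*}
\sqrt{|g^*|}\,g^{*ss} = 1+\dot\lambda^2\tanh^2(s), \quad \sqrt{|g^*|}\,g^{*zz}=1, \quad \sqrt{|g^*|}\,g^{*sz}=-\dot\lambda\tanh(s).
\end{align*}
This structural cancellation is why the stated formula is naturally phrased with the prefactor $\lambda^2\cosh^2(s)$ pulled over to the left-hand side.

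Next I would expand the two outer derivatives $\partial_s$ and $\partial_z$, producing the second-order part $(1+\dot\lambda^2\tanh^2(s))\partial_{ss} + \partial_{zz} - 2\dot\lambda\tanh(s)\partial_{sz}$ (the mixed term appears with a factor of $2$ from the symmetric sum over $i,j$), plus first-order terms coming from derivatives of the coefficients. Differentiating in $s$ uses only the identities $\partial_s\tanh(s)=\cosh^{-2}(s)$ and $\partial_s\tanh^2(s)=2\tanh(s)\cosh^{-2}(s)$, and contributes the $2\dot\lambda^2\tanh(s)\cosh^{-2}(s)\,\partial_s$ piece and the $-\dot\lambda\cosh^{-2}(s)\,\partial_z$ piece.

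The one subtlety, and the likely source of any sign or factor error, is the $z$-differentiation of $\dot\lambda$. The paper's convention is that \textquotedblleft$\dot{}$\textquotedblright\ denotes derivation in $\sigma$, not $z$, with the conversion $\Phi'(z)=\lambda(z)\dot\Phi(z)$. Therefore $\partial_z \dot\lambda = \lambda\ddot\lambda$, and the $z$-derivative of the mixed coefficient $-\dot\lambda\tanh(s)$ yields $-\lambda\ddot\lambda\tanh(s)\,\partial_s$, which is exactly the origin of the $\ddot\lambda\lambda\tanh(s)\,\partial_s$ term on the right. Keeping careful track of this chain rule factor is where one must be most careful; after that, the only remaining step is to collect the two mixed terms $-\dot\lambda\tanh(s)\partial_{sz}$ and $-\dot\lambda\tanh(s)\partial_{zs}$ into $-2\dot\lambda\tanh(s)\partial_{sz}$ and to observe that the $\partial_{zz}f$ term from $\partial_z(\partial_z f)$ has no coefficient-derivative contribution (since $\sqrt{|g^*|}g^{*zz}=1$). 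Matching all six terms against the claimed expression then finishes the proof.
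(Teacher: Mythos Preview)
Your proposal is correct and follows precisely the approach indicated in the paper: apply the local coordinate formula for the Laplace--Beltrami operator using the determinant \eqref{determinant} and dual metric components \eqref{dual_metric}. The paper's own proof is a one-line reference to these ingredients, and your write-up simply supplies the expansion and bookkeeping (including the correct handling of $\partial_z\dot\lambda=\lambda\ddot\lambda$) that the paper leaves to the reader.
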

\begin{proof}
This follows directly from the expressions for the coefficients of the dual metric and its determinant in (\ref{determinant}) and (\ref{dual_metric}), and the local coordinate expression for.
\end{proof}

Lemmas \ref{UnitNormal}, \ref{Metric}, \ref{2FF}, \ref{MeanCurvature} and \ref{LaplaceOperator} then collectively prove Proposition \ref{FStarGeometry}.

 \section{Proof of Proposition \ref{StripInvertibilityWithRBC}, Proposition \ref{RBCSolutionsIntegralControl} and Proposition \ref{RBCSolutionsWeightedControl}}
 
 Proposition  \ref{StripInvertibilityWithRBC} follows from the fact that $\tanh(s)$ spans the kernel of $\tilde{\mathcal{L}}$ on $\Lambda(\ell, N)$ with the boundary conditions stated in Proposition  \ref{StripInvertibilityWithRBC} (\ref{SolutionRobinBoundaryCondition}) and (\ref{SolutionNeumanBoundaryCondition}), which we prove below:
 \begin{proposition} \label{WhatsInTheRobinKernel}
 Let $\phi: \Lambda(\ell, N) \rightarrow R$ satisfy:
 \begin{enumerate} 
 \item $\tilde{\mathcal{L}} \phi = 0$. 
\item $\partial_s\phi (\ell, z) \tanh(\ell) - \phi(\ell, z)/\cosh^2(\ell) = 0$.

\item $\partial_z \phi (s, \pm N) = 0 $.
 \end{enumerate}
 Then $\phi$ is a multiple of $\tanh(s)$.
 \end{proposition}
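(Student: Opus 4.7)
The plan is to exploit the kernel element $\tanh(s)$ via the substitution $\psi(s,z) := \phi(s,z)/\tanh(s)$, which turns $\tilde{\mathcal{L}}\phi = 0$ into a degenerate divergence-form equation. First, $\tanh(s)$ does lie in this kernel: $\tilde{L}\tanh = 0$, it is $z$-independent, and condition (2) asserts precisely that $\phi$ and $\tanh$ share the same logarithmic derivative at $s = \ell$. By the $s \mapsto -s$ symmetry of the operator and boundary conditions (the Robin condition at $s = -\ell$ being the natural counterpart of (2), compatible with $\tanh$ being odd), I decompose $\phi = \phi_{\mathrm{odd}} + \phi_{\mathrm{even}}$ in $s$, each summand again in the kernel; the substitution argument handles the odd piece.

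For $\phi_{\mathrm{odd}}$, set $\psi := \phi_{\mathrm{odd}}/\tanh(s)$, which extends smoothly across $s = 0$ since $\phi_{\mathrm{odd}}(0,z) = 0$. A short computation using $\tilde{L}\tanh = 0$ and $z$-independence of $\tanh$ gives the identity
\[
\tilde{\mathcal{L}}(\tanh \cdot \psi) = \tanh\,\Delta \psi + 2\cosh^{-2}(s)\,\psi_s,
\]
and multiplication by $\tanh(s)$ puts the right-hand side in divergence form, so $\tilde{\mathcal{L}}\phi_{\mathrm{odd}} = 0$ is equivalent to
\[
\mathrm{div}\bigl(\tanh^2(s)\,\nabla \psi\bigr) = 0.
\]
Substituting $\phi_s = \cosh^{-2}(s)\,\psi + \tanh(s)\,\psi_s$ into the Robin condition at $s = \pm\ell$, the $\psi$-terms cancel exactly because $\tanh$ itself obeys that condition, leaving the Neumann condition $\psi_s(\pm\ell, z) = 0$; condition (3) passes to $\psi$ directly.

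With Neumann data on the whole of $\partial\Lambda(\ell, N)$, I multiply the divergence-form equation by $\psi$ and integrate by parts. Every boundary integral vanishes, yielding
\[
\int_{\Lambda(\ell, N)} \tanh^2(s)\,|\nabla \psi|^2\,ds\,dz = 0.
\]
Since $\tanh^2(s) > 0$ off $\{s = 0\}$, $\psi$ is locally constant on $\Lambda(\ell, N) \setminus \{s = 0\}$, and smoothness of $\phi_{\mathrm{odd}} = \tanh \cdot \psi$ across $s = 0$ collapses these to a single constant, so $\phi_{\mathrm{odd}} = c\,\tanh(s)$.

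The main obstacle is the even piece $\phi_{\mathrm{even}}$, for which the substitution develops a pole at $s = 0$ whenever $\phi_{\mathrm{even}}(0,z) \neq 0$. To dispatch it I plan to Fourier expand in $z$, producing for each mode an ODE $\tilde{L}a_n = (n\pi/(2N))^2 a_n$ on $(-\ell, \ell)$ for an even function $a_n$ satisfying the Robin condition at $\pm\ell$. The natural $\lambda = 0$ candidate $s\tanh(s) - 1$ fails the Robin condition by a nonzero constant, so the $z$-independent even kernel element is immediately ruled out; the remaining positive even eigenvalues of $\tilde{L}$ must then be shown not to resonate with any $(n\pi/(2N))^2$, which is the genuinely delicate point of the even case.
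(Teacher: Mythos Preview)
Your substitution argument for the odd part is correct and complete, but the even-part worry is a phantom created by a boundary condition the proposition omits. The paper's own proof opens with ``For each fixed $k$, we have that $\phi_k(0)=0$,'' i.e.\ it assumes the Dirichlet condition $\phi(0,z)=0$ (equivalently, oddness in $s$ on $[-\ell,\ell]$), consistent with how $u_\infty(0,z)=0$ is imposed throughout Section~5. With that condition in force there is no even piece, and your energy identity
\[
\int_{\Lambda(\ell,N)} \tanh^2(s)\,|\nabla\psi|^2\,ds\,dz=0
\]
finishes the proof outright: the boundary term at $s=0$ vanishes because $\tanh^2(0)=0$ while $\psi=\phi/\tanh$ and $\psi_s$ stay bounded there (from the Taylor expansion of $\phi$ about $s=0$).

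That said, your route is genuinely different from the paper's. The paper Fourier-expands in $z$, obtains the ODE $\phi_k''+(2\cosh^{-2}(s)-k^2)\phi_k=0$ for each mode, and then runs a comparison argument against $\tanh(s)$: normalizing so $\phi_k(\ell)=\tanh(\ell)$, one shows via a maximum-principle step that necessarily $\phi_k'(\ell)>\cosh^{-2}(\ell)$ for $k\neq 0$, contradicting the Robin condition. Your approach bypasses Fourier and the ODE comparison entirely: the factorization $\phi=\tanh\cdot\psi$ converts the Robin data exactly into Neumann data for $\psi$ (because $\tanh$ itself saturates the Robin relation), and the resulting degenerate divergence-form equation yields the conclusion by a single integration by parts. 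This is cleaner and avoids the somewhat delicate sign-tracking in the paper's maximum-principle step; the paper's approach, on the other hand, makes the spectral picture (only the zero mode survives) explicit, which connects more transparently to the later Poincar\'e-type estimates. Your even-mode sketch---checking that no $(n\pi/(2N))^2$ hits an even Robin eigenvalue of $\tilde L$---would indeed be the right shape of argument if the Dirichlet condition were absent, but here it is unnecessary.
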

 \begin{proof}
For fixed $s$,  let $\phi_{k} (s)$ denote the Fourier coefficients of $\phi(s, z)$, so that 
\begin{align} \notag
\phi (s, z) =\sum \phi_k(s) e^{ik z}.
\end{align}
Then $\phi_k$ satisfies the equation:
\begin{align} \notag
\tilde{L} \phi_k - k^2 \phi_k = \phi_k'' + (2\cosh^{-2} (s) - k^2) \phi_k = 0.
\end{align}
For each fixed $k$, we have that $\phi_k (0) = 0$ and after possibly re-normalizing, we can assume that $\phi_k(\ell) = \tanh(\ell)$. Assume also that  $\phi_k' (\ell) \leq \tanh'(\ell)$. Thus, for $\ell - s$ sufficiently small and positive, we have that $\phi_k(s) > \tanh(s)$. Let $s_0$ be the largest real number less that $\ell$ so that 
\begin{align} \notag
\phi_k(s_0)  = \tanh(s_0).
\end{align}
Then $w(s) : = \phi_k(s) - \tanh(s)$ is positive on the interior of $[s_0, \ell]$ and vanishes at the endpoints. However, we have
\begin{align}
\tilde{L} w(s)  = \tilde{L} \phi_k = k^2 \phi_k > 0
\end{align}
so that $w(s)$ cannot have an interior maximum, which gives a contradiction.  Thus, we have that 
\begin{align}
\phi_{k}'(\ell) > \tanh' (\ell) = \cosh^{-2} (\ell).
\end{align}
It then immediately follows that if $\phi$ satisfies the Robin boundary condition in the statement, then only the zero mode is present in the Fourier expansion, which gives 
\begin{align} \notag
\phi(s) = c \tanh(s)
\end{align}
and completes the proof.
 \end{proof}
 From this Proposition \ref{StripInvertibilityWithRBC} immediately follows:
 \begin{proof} [proof of Proposition \ref{StripInvertibilityWithRBC}]
The existence of weak solutions and their higher regularity  follows from standard theory. 
 \end{proof}
 
 In order to prove Proposition \ref{RBCSolutionsIntegralControl}, We first observe that the solutions $u_N$ inherit the orthogonality condition from $E$.
 \begin{proposition} \label{SolutionUnweightedOrthogonality}
 It holds that 
 \begin{align} \label{FlatOrthogonalityCondition}
 \int_0^\ell u_N (s, z)  \tanh(s)ds = 0
 \end{align}
 for all $z \in [-N, N]$.
 \end{proposition}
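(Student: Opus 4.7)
The plan is to define the $z$-dependent quantity
\[
I(z) := \int_0^\ell u_N(s,z)\tanh(s)\, ds,
\]
so that the goal becomes proving $I \equiv 0$ on $[-N,N]$. The strategy is to show first that $I$ is constant and then that the constant is zero. Differentiating twice under the integral and using $\partial_{zz}u_N = E - \partial_{ss}u_N - 2\cosh^{-2}(s)\,u_N$ from $\tilde{\mathcal{L}}u_N = E$ yields
\[
I''(z) = \int_0^\ell E(s,z)\tanh(s)\, ds - \int_0^\ell\bigl[\partial_{ss}u_N + 2\cosh^{-2}(s)u_N\bigr]\tanh(s)\, ds,
\]
and the first integral vanishes by hypothesis on $E$.

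For the second integral I would integrate by parts twice in $s$ to transfer the derivatives onto $\tanh(s)$. Because $\tilde L\tanh = 0$ (that is, $\tanh''(s) + 2\cosh^{-2}(s)\tanh(s)\equiv 0$, using $\tanh'(s) = \cosh^{-2}(s)$), the interior contributions cancel identically and only boundary terms survive:
\[
\int_0^\ell\bigl[\partial_{ss}u_N + 2\cosh^{-2}(s)u_N\bigr]\tanh(s)\, ds = \bigl[\partial_s u_N(\ell,z)\tanh(\ell) - u_N(\ell,z)/\cosh^2(\ell)\bigr] - u_N(0,z).
\]
The bracket vanishes by the Robin condition in Proposition \ref{StripInvertibilityWithRBC}(\ref{SolutionRobinBoundaryCondition}), and the term $u_N(0,z)$ vanishes by the Dirichlet condition at $s=0$ implicit in the weak-solution construction (the normalization $\phi_k(0)=0$ already used in the proof of Proposition \ref{WhatsInTheRobinKernel}). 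Hence $I''\equiv 0$ on $[-N,N]$, so $I$ is affine in $z$.

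Differentiating under the integral and using the Neumann condition in Proposition \ref{StripInvertibilityWithRBC}(\ref{SolutionNeumanBoundaryCondition}) gives $I'(\pm N) = \int_0^\ell \partial_z u_N(s,\pm N)\tanh(s)\, ds = 0$, so the affine function $I$ must be a constant $c$. To pin down $c$, I would invoke that $u_N$ is produced as the unique weak solution orthogonal to the one-dimensional kernel spanned by $\tanh(s)$, i.e.
\[
\int_{\Lambda(\ell,N)} u_N(s,z)\tanh(s)\, ds\, dz = 0;
\]
since $I\equiv c$, the left side equals $2Nc$, forcing $c=0$ and thus $I\equiv 0$.

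The main (small) obstacle is correctly accounting for the boundary term at $s=0$, which requires either an explicit Dirichlet condition or the odd-in-$s$ symmetry implicit in the construction; the rest is the familiar adjoint computation $\int(\tilde{\mathcal{L}}u_N)\tanh = \int u_N(\tilde{\mathcal{L}}\tanh) + \mathrm{boundary}$, engineered so that both the bulk adjoint and the Robin boundary term annihilate, leaving only the orthogonality relation in $z$.
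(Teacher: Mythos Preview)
Your proposal is correct and follows essentially the same route as the paper: show $I''\equiv 0$ by multiplying $\tilde{\mathcal L}u_N=E$ by $\tanh(s)$, integrating by parts in $s$, and using the orthogonality of $E$ together with the Robin condition at $s=\ell$; then use the Neumann condition at $z=\pm N$ to conclude $I$ is constant; finally fix the constant to zero via the freedom in the kernel. The paper's version is terser---it writes the last step as ``adding a multiple of $\tanh(s)$ to $u_N$ if necessary'' rather than invoking an orthogonality normalization in the weak construction, and it likewise leaves the $s=0$ boundary term implicit---but the content is identical.
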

 \begin{proof}
 We have
 \begin{align} \notag
 \partial^2_{z z}u_N + \partial^2_{s s} u_N + 2 \cosh^{-2} (s) u_N =  \partial^2_{z z}u_N + \tilde{L} u_N = E.
 \end{align}
 Multiplying both sides by $\tanh(s)$, integrating by parts and using the Robin boundary condition then gives
 \begin{align} \notag
 \int_0^\ell  \left(\partial^2_{z z}u_N\right)(s, z) \tanh(s) ds =    \partial^2_{z z}\left(\int_0^\ell  u_N(s, z) \tanh(s) ds \right)= 0.
 \end{align}
 The boundary conditions  then immediately imply $\left(\int_0^\ell  u_N(s, z) \tanh(s) ds \right) $ is constant in $z$. The conclusion then follows by adding a multiple of $\tanh(s)$ to $u_N$ if necessary. 
 \end{proof}
\begin{comment}  let $\phi (s)$ be the transformation given by
 \begin{align} \notag
 \phi(s) : = \arcsin (\tanh(s)).
 \end{align}
 then $\phi$ maps the interval $[0, \infty]$ onto $[0, \pi/2]$. Let $s(\phi) : = \text{arctanh} (\sin (\phi))$ be the inverse of $\phi(s)$.  Then it is straight forward to check that 
 \begin{align} 
 & \tanh(s (\phi)) = \sin (z), \quad \cosh(s( \phi)) = \cos^{-1}(\phi), \\ \notag
  &\partial_{\phi} = \cos^{-1} (\phi) \partial_s, \quad \partial^2_{\phi \phi}  =  \cos^{-2}(\phi) \partial^2_{s s} + \frac{\sin(\phi)}{\cos^2 (\phi)} \partial_s =  \cos^{-2}(\phi) \partial^2_{s s} + \frac{\sin(\phi)}{\cos^3 (\phi)} \partial_\phi
 \end{align}
 Thus, the operator $\tilde{L}$ pulls back to the operator $\cos^2 (\phi) \hat{L}$ under $s(\phi)$ to the interval $[0, \pi/2]$, where $\hat{L}$ is given by:
 \begin{align} 
 \hat{L} : = \partial^2_{\phi \phi} + \tan(\phi) \partial_\phi + 2
 \end{align}
 
 We then have
 \begin{proposition} 
 The operator $\hat{L}$ has the following properties
 \begin{enumerate}
 \item It is self-adjoint with respect to the measure $\cos(\phi) d \phi$.
 \item It as a discrete spectrum  $(\mu_k, u_k)$ so that
 \begin{align} \notag
 \hat{L} \phi_k = \mu_k \cos(\phi) u_k.
 \end{align}
 \item  The lowest eigenvalue of $\hat{L}$ on $[0, \pi/2]$ with Dirichlet conditions at $0$ and Neuman conditions at $\pi/2$ is $0$. It has multiplicity one and the corresponding eigenfunction is $\sin(\phi)$.
 \end{enumerate}
 \end{proposition}
 \end{comment}

\begin{definition} \label{Dim1Energy}
For a function $f$ belonging to the space $W^{1, 2} ([0, \ell])$ we set
 \begin{align} \notag
 e_\ell (f) : = \int_0^\ell f'^2 (s) ds - 2 \int_0^\ell f^2(s) \cosh^{-2} (s) ds - f^2 (\ell) \tanh'(\ell)/\tanh(\ell).
 \end{align}
 \end{definition}

\begin{proposition}[One dimensional weighted Poincare Inequality] \label{PoincareInequality}
There is a universal constant  $\beta > 0$ independent of $\ell$ so that: Let $f \in W^{1, 2}([0, \ell])$ satisfy the orthogonality condition:
\begin{align} \label{WeightedOrthogonalityCondition}
\int_0^\ell f (s)\tanh(s)/\cosh^2(s) ds = 0.
\end{align}
Then it holds that 
\begin{align} \notag
e_\ell(f) \geq \beta \|f : L^2([0, \ell], \cosh^{-2}(s))\|^2.
\end{align}
\end{proposition}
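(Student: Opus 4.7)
The plan is to exploit that $\tanh(s)$ is the zeroth eigenfunction of the underlying Sturm--Liouville problem: performing the substitution $f = \tanh(s)\,v$ reduces $e_\ell(f)$ to a weighted Dirichlet energy of $v$, and the result then follows from a uniform weighted Poincar\'e--Wirtinger inequality.

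First I would compute $e_\ell(f)$ after the substitution. Expanding $(f')^2 = \cosh^{-4}(s)\,v^2 + 2\cosh^{-2}(s)\tanh(s)\,v v' + \tanh^2(s)\,(v')^2$ and integrating the cross term by parts using the identity $(\cosh^{-2}\tanh)' = \cosh^{-4} - 2\tanh^2\cosh^{-2}$, the bulk contributions precisely cancel $2\int f^2 \cosh^{-2}\,ds$, while the boundary contribution at $s = \ell$ evaluates to $\cosh^{-2}(\ell)\tanh(\ell)\,v^2(\ell)$, which coincides with $f^2(\ell)\tanh'(\ell)/\tanh(\ell)$ and so cancels the third term in the definition of $e_\ell$. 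The boundary contribution at $s = 0$ vanishes because $\cosh^{-2}(s)\tanh(s)\to 0$ there. The upshot is the clean identity
\[
e_\ell(f) = \int_0^\ell \tanh^2(s)\,(v'(s))^2\,ds.
\]
Under the same substitution, the orthogonality hypothesis rewrites as $\int_0^\ell v\,\tanh^2 \cosh^{-2}\,ds = 0$, which says that $v$ is orthogonal to the constant function $1$ in the weighted space $L^2([0,\ell];\tanh^2\cosh^{-2}\,ds)$, and $\|f:L^2([0,\ell],\cosh^{-2}(s))\|^2 = \int_0^\ell v^2\,\tanh^2\cosh^{-2}\,ds$. So the proposition reduces to the weighted Poincar\'e inequality
\[
\int_0^\ell (v')^2\,\tanh^2(s)\,ds \;\geq\; \beta \int_0^\ell v^2\,\tanh^2(s)\cosh^{-2}(s)\,ds
\]
with $\beta$ independent of $\ell$.

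Next I would change variables to $\phi = \arcsin(\tanh s)$, a diffeomorphism of $[0,\ell]$ onto $[0,\phi_\ell]\subset(0,\pi/2]$. Using $ds = d\phi/\cos\phi$, the inequality transforms into a weighted Poincar\'e--Wirtinger inequality on a bounded interval:
\[
\int_0^{\phi_\ell} \sin^2\phi\cos\phi\,(v')^2\,d\phi \;\geq\; \beta \int_0^{\phi_\ell} v^2\,\sin^2\phi\cos\phi\,d\phi,
\]
where $v$ is orthogonal to $1$ with respect to the weight $\sin^2\phi\cos\phi\,d\phi$.

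The main obstacle is to make $\beta$ uniform in $\phi_\ell$. For each fixed $\phi_\ell\in(0,\pi/2]$, the sharp Poincar\'e constant $\beta(\phi_\ell)$ is the first positive eigenvalue of the Sturm--Liouville problem $-(\sin^2\phi\cos\phi\,v')' = \mu\,\sin^2\phi\cos\phi\,v$ on $[0,\phi_\ell]$ with natural boundary conditions; since the weight is smooth and positive in the interior, the kernel is spanned by the constants (corresponding under the substitution to the one-dimensional Robin kernel of Proposition \ref{WhatsInTheRobinKernel}), so $\beta(\phi_\ell)>0$ for every $\phi_\ell$. Standard continuous dependence of eigenvalues shows $\beta(\phi_\ell)$ is continuous in $\phi_\ell$. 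The limit $\phi_\ell\to\pi/2$ gives $\beta(\phi_\ell)\to\beta(\pi/2)>0$, the first positive eigenvalue of the limiting compact problem. The limit $\phi_\ell\to 0$ is handled by the rescaling $\phi=\phi_\ell\tau$: since $\sin^2\phi\cos\phi\sim\phi^2$ near $0$, the rescaled weight is independent of $\phi_\ell$ while the derivative scales by $\phi_\ell^{-1}$, giving $\beta(\phi_\ell)\sim\phi_\ell^{-2}\to\infty$. Combining these yields $\inf_{\phi_\ell\in(0,\pi/2]}\beta(\phi_\ell)>0$, which serves as the universal constant $\beta$.
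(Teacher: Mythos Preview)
Your argument is correct and takes a genuinely different route from the paper's. The paper argues by contradiction and compactness: assuming the inequality fails, it takes a sequence $(f_k,\ell_k)$ with $e_{\ell_k}(f_k)\to 0$ and unit weighted $L^2$ norm, obtains a uniform $W^{1,2}$ bound, passes to a subsequential limit $f$ on $[0,\infty)$ with $e_\infty(f)=0$, and concludes that $f$ must be a multiple of $\tanh(s)$, contradicting the orthogonality condition. Your approach instead uses the ground-state substitution $f=\tanh(s)\,v$ to obtain the exact identity $e_\ell(f)=\int_0^\ell \tanh^2(s)\,(v')^2\,ds$, which immediately gives $e_\ell(f)\ge 0$ and reduces the problem to a weighted Poincar\'e--Wirtinger inequality whose uniformity you then establish by spectral continuity after the change of variables $\phi=\arcsin(\tanh s)$. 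Your route is more explicit and explains structurally why the inequality holds (it is precisely the nonnegativity of the Hessian of the energy at the ground state); the paper's route is softer and avoids the Sturm--Liouville machinery, but gives less quantitative information.

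Two remarks. First, both your argument and the paper's implicitly use $f(0)=0$: the paper invokes it to identify the limit with $\tanh(s)$, and you need it so that $v=f/\tanh(s)$ is regular at $s=0$ and the boundary term there vanishes; this Dirichlet condition is indeed inherited from the solutions $u_N$ in context, though it is not stated in the proposition itself. Second, your uniformity step at $\phi_\ell\to\pi/2$ deserves a word of care since the weight $\sin^2\phi\cos\phi$ degenerates at $\pi/2$; the limiting problem is still a regular singular Sturm--Liouville problem with discrete spectrum (it is essentially the restriction of $\Delta_{\mathbb S^2}+2$ to odd zonal functions), so the claimed positivity and continuity go through, but this is the one place where a reader might want another sentence.
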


\begin{proof}
Suppose not, and let $(f_k, \ell_k)$ be  a sequence of functions satisfying
\begin{align} \notag
|e_{\ell_k}(f_k)| \leq 1/j \quad \|f : L^2([0, \ell_k], \cosh^{-2}(s))\|^2 = 1.
\end{align}
We then immediately get that $\{ f_k\}$ is a uniformly  bounded sequence in $W^{1, 2}[0, \ell]$ independent of  $\ell$. Assuming $\ell_k \rightarrow \infty$, we then get that $f_k$  strongly sub converges  on compact subsets of $[0, \infty]$ to a limiting function  $f$ in $L^{2}([0, \infty], \cosh^{-2}(s))$. Moroever, the uniform energy bound on the sequence $\{ f_k\}$ gives that 
\begin{align} \notag
|f_k (s)| \leq C s^{1/2}, \quad |f (s)| \leq C s^{1/2}.
\end{align}
The dominated convergence theorem then implies that $f$ satisfying the following conditions:
\begin{align} \notag
e (f) : = e_{\infty} (f) = 0, \quad \|f : L^{2}([0, 1], \cosh^{-2} (s))\| = 1, \quad  \int_0^{\infty} f \tanh(s) / \cosh^2(s) ds = 0.
\end{align}
The Dirichlet condition  $f(0) = 0$ then implies that $f$ is a non-trivial multiple of $\tanh(s)$, which violates that last condition above. This concludes the proof. 
\end{proof}

 \begin{proposition} \label{PoincareInequalityAgain}
 Let $f$ be  a function satisfying the hypothesis of Proposition \ref{PoincareInequality}. Then it holds that
 \begin{align} \notag
 \left( 1 - \frac{2}{2 + \beta} \right)  \int_{0}^\ell f'^2(s)  \leq 2e_\ell (f).
 \end{align}
 \end{proposition}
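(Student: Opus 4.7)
The plan is to combine Proposition~\ref{PoincareInequality} with an auxiliary boundary estimate derived from the orthogonality condition. Write $A := \int_0^\ell f'(s)^2\,ds$, $B := \int_0^\ell f(s)^2\cosh^{-2}(s)\,ds$, and $C := f(\ell)^2\tanh'(\ell)/\tanh(\ell)$, so that $e_\ell(f) = A - 2B - C$ by definition. Proposition~\ref{PoincareInequality} gives $B \leq e_\ell(f)/\beta$, and substituting into the identity $A = e_\ell(f) + 2B + C$ yields the preliminary estimate
\begin{align*}
A \leq \tfrac{\beta+2}{\beta}\,e_\ell(f) + C.
\end{align*}
The remaining task is to prove the auxiliary boundary bound $C \leq \tfrac{1}{2}\,e_\ell(f)$; combining this with the display then gives $A \leq \tfrac{3\beta+4}{2\beta}\,e_\ell(f) \leq \tfrac{2(\beta+2)}{\beta}\,e_\ell(f)$ (the last step being the algebraic inequality $3\beta + 4 \leq 4\beta + 8$), which rearranges to the conclusion.

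For the boundary estimate, I would substitute $u(s) := f(s)/\tanh(s)$ and exploit the fact that $\tanh$ lies in the kernel of $\tilde{L}$. A single integration by parts produces the clean identity $e_\ell(f) = \int_0^\ell u'(s)^2\tanh^2(s)\,ds$, with the boundary contribution at $s = 0$ vanishing because $\tanh(0) = 0$. The orthogonality condition transforms to $\int_0^\ell u(s)\,w(s)\,ds = 0$ with $w(s) := \tanh^2(s)\cosh^{-2}(s)$, and one computes $\int_0^\ell w\,ds = \tfrac{1}{3}\tanh^3(\ell)$. Writing $u(\ell)\int_0^\ell w = \int_0^\ell (u(\ell) - u(s))\,w(s)\,ds$ via the orthogonality, then applying Cauchy--Schwarz to this identity along with the pointwise estimate
\begin{align*}
(u(\ell) - u(s))^2 = \Bigl(\int_s^\ell u'(t)\,dt\Bigr)^2 \leq e_\ell(f)\,\int_s^\ell \tanh^{-2}(t)\,dt
\end{align*}
(another Cauchy--Schwarz, weighted by $\tanh^2$), and interchanging the order of integration by Fubini yields, after the computation $\int_0^\ell \tanh^{-2}(t)\int_0^t w\,ds\,dt = \tfrac{1}{3}\ln\cosh(\ell)$, the bound $u(\ell)^2 \leq e_\ell(f)\cdot\ln\cosh(\ell)/\tanh^3(\ell)$. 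Translating back via $C = u(\ell)^2\tanh(\ell)\cosh^{-2}(\ell)$ gives $C \leq e_\ell(f)\cdot \ln\cosh(\ell)/\sinh^2(\ell)$.

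The last ingredient is the elementary inequality $\ln\cosh(\ell)/\sinh^2(\ell) \leq \tfrac{1}{2}$ for all $\ell > 0$. The sign of the derivative of this ratio is controlled by $\tanh^2(\ell) - 2\ln\cosh(\ell)$, and this quantity vanishes at $\ell = 0$ and has derivative $-2\tanh^3(\ell) \leq 0$, hence is non-positive throughout $[0, \infty)$; consequently $\ln\cosh(\ell)/\sinh^2(\ell)$ decreases monotonically from its limit $\tfrac{1}{2}$ at $\ell = 0^+$. The main obstacle is obtaining the boundary bound $C \leq \tfrac{1}{2} e_\ell(f)$ with this explicit constant, which is precisely what pairs with the Poincar\'e inequality to produce the factor of $2$ on the right-hand side of the stated inequality; the rest of the proof is elementary algebraic recombination.
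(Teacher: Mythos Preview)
Your proof is correct and takes a genuinely different route from the paper's. Both arguments start by feeding Proposition~\ref{PoincareInequality} into the identity $A = e_\ell(f) + 2B + C$ (in your notation) to reduce matters to controlling the boundary term $C = f(\ell)^2\tanh'(\ell)/\tanh(\ell)$, but they diverge there. The paper simply bounds $f(\ell)^2 \le \ell\int_0^\ell f'^2$ by Cauchy--Schwarz (using $f(0)=0$), so that $C \le \bigl(\ell\tanh'(\ell)/\tanh(\ell)\bigr)A = \bigl(\ell/(\sinh\ell\cosh\ell)\bigr)A$, and then absorbs this into the left side. That absorption step produces the factor $2$ only when $\ell/(\sinh\ell\cosh\ell)\le 1/2$, which fails for small $\ell$ but holds in the regime $\ell = \lambda^{-\tau\epsilon}\gg 1$ actually used in the paper. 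Your substitution $u = f/\tanh$ and the identity $e_\ell(f)=\int_0^\ell u'^2\tanh^2$ (which, incidentally, makes the nonnegativity of $e_\ell$ on this subspace transparent) together with the orthogonality condition yield the sharper bound $C \le \tfrac12\,e_\ell(f)$ valid for \emph{every} $\ell>0$. So your argument is longer but strictly more uniform; the paper's is a two-line shortcut that suffices in context.

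One small technical remark: the vanishing of the boundary term at $s=0$ in your integration by parts, and indeed the finiteness of $u$ near $0$, are immediate for smooth $f$ with $f(0)=0$ but for general $f\in W^{1,2}$ one should either argue by density or note that the relevant quantity is $f(s)^2\tanh'(s)/\tanh(s)$, which tends to $0$ since $|f(s)|^2 \le s\int_0^s f'^2$.
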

 \begin{proof}
The inequality in Proposition \ref{PoincareInequality} can be written explicitly as:
\begin{align} \notag
\int_0^\ell f'^2 (s)ds - \int_0^\ell f^2 (2 + \beta) \cosh^{-2} (s) + f^2(\ell)  \tanh'(\ell)/ \tanh(\ell) \geq 0.
\end{align}
Equivalently:
\begin{align} \notag
\left(1 - \frac{2}{2 + \beta} \right)\int_0^\ell f'^2 (s) ds + \left( \frac{2}{2 + \beta} -1 \right)f^2(\ell)  \tanh'(\ell)/ \tanh(\ell) \leq  e(f)
\end{align}
Since $|f^2(\ell)| \leq \ell \int_0^\ell f'^2(s) ds$, the claim follows directly. 
 \end{proof}
There is a discrepancy between the orthogonality condition we would like the solutions $u_N$ to inherit--namely, the weighted orthogonality condition in Proposition \ref{PoincareInequality} (\ref{WeightedOrthogonalityCondition})--and the orthogonality condition we are able to control--that recorded in Proposition \ref{SolutionUnweightedOrthogonality}. Nonetheless, we can show that solutions inherit enough of the weighted orthogonality condition to prove sufficient bounds. 

 \begin{proposition} \label{fOrthogonalPart}
 Let $f(s)$ be a function in  $L^2([0, \ell])$  satisfying the orthogonality condition (\ref{FlatOrthogonalityCondition}) in Proposition \ref{SolutionUnweightedOrthogonality}. Then we can write
 \begin{align} \label{fDecomp}
 f (s) = \alpha \tanh(s)  + g (s)
 \end{align}
 where $g$ satisfies the following conditions
 \begin{align} \notag
\int_0^\ell g(s) \tanh(s) /\cosh^2(s)  = 0,  \quad   \int_0^\ell f^2 (s) ds \leq \int_0^\ell g^2 (s) ds.
 \end{align}
 \end{proposition}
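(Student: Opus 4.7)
The plan is to pick $\alpha$ precisely so that $g := f - \alpha \tanh(s)$ satisfies the weighted orthogonality by construction, and then obtain the $L^2$ inequality from a Pythagorean identity in the \emph{unweighted} inner product. The two orthogonality conditions live with respect to different weights, so the projection that enforces the weighted condition is not a projection in $L^2([0,\ell])$ itself; consequently subtracting the $\alpha\tanh$ piece \emph{increases} the unweighted $L^2$ norm rather than decreasing it.

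Concretely, set
\begin{equation} \notag
\alpha := \frac{\int_0^\ell f(s)\tanh(s)\cosh^{-2}(s)\,ds}{\int_0^\ell \tanh^2(s)\cosh^{-2}(s)\,ds},
\end{equation}
which is well defined since the denominator is a strictly positive constant. Linearity of the integral together with the choice of $\alpha$ gives
\begin{equation} \notag
\int_0^\ell g(s)\tanh(s)\cosh^{-2}(s)\,ds \;=\; \int_0^\ell f(s)\tanh(s)\cosh^{-2}(s)\,ds \;-\; \alpha\int_0^\ell \tanh^2(s)\cosh^{-2}(s)\,ds \;=\; 0,
\end{equation}
which verifies the required weighted orthogonality for $g$.

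For the $L^2$ estimate, I expand
\begin{equation} \notag
\int_0^\ell g^2(s)\,ds \;=\; \int_0^\ell f^2(s)\,ds \;-\; 2\alpha\int_0^\ell f(s)\tanh(s)\,ds \;+\; \alpha^2\int_0^\ell \tanh^2(s)\,ds.
\end{equation}
The cross term vanishes because $f$ satisfies the unweighted orthogonality condition $\int_0^\ell f(s)\tanh(s)\,ds = 0$ from Proposition \ref{SolutionUnweightedOrthogonality}. Since the remaining $\alpha^2\int_0^\ell \tanh^2(s)\,ds$ is non-negative, I conclude $\int_0^\ell f^2(s)\,ds \leq \int_0^\ell g^2(s)\,ds$, completing the decomposition.

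There is really no obstacle here: the whole argument is a one-line Pythagorean identity exploiting the mismatch between the two inner products. The only thing worth flagging is the slightly counterintuitive direction of the inequality, which is exactly what will be useful downstream when Proposition \ref{PoincareInequality} is invoked on $g$ and the resulting bound needs to be transferred back to $f$.
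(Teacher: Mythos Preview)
Your argument is correct and takes the same approach as the paper: both choose the identical
\[
\alpha = \frac{\int_0^\ell f(s)\tanh(s)\cosh^{-2}(s)\,ds}{\int_0^\ell \tanh^2(s)\cosh^{-2}(s)\,ds}
\]
to force the weighted orthogonality of $g$. The paper's written proof actually stops after recording this choice of $\alpha$ and does not verify the inequality $\int_0^\ell f^2 \le \int_0^\ell g^2$; your Pythagorean expansion, using the unweighted orthogonality $\int_0^\ell f\tanh = 0$ to kill the cross term, supplies exactly the missing line.
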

 \begin{proof}
Write
\begin{align} \notag
f = \alpha \tanh(s) +  (f - \alpha \tanh(s))  := \tanh(s) + g
\end{align}
Then choosing $\alpha$ appropriately, it is clear that we can arrange for $g$ to be orthogonal to $\tanh(s)/\cosh^2(s)$. In particular, we choose $\alpha$ so that 
\begin{align}\label{AlphaExplicit}
\alpha = \frac{\int_0^\ell f(s) \tanh(s)/\cosh^2(s) ds}{\int_0^\ell \tanh^2(s)/ \cosh^2(s) ds}.
\end{align}
\end{proof}

\begin{comment}
\begin{proof}
In the  following, we let $\langle-, - \rangle$ denote the flat $L^2$ inner product on the interval $[0, \ell]$. Additionally, we will abbreviate the functions $\tanh(s)$ and $\cosh(s)$ by $T$ and $C$ respectively. We the write $T/C^2$ in the orthogonal decomposition in $L^2([0, \ell])$:
\begin{align} \notag
T/C^2 = \frac{\langle T/C^2, T \rangle}{\langle T, T \rangle} T + T^\perp.
\end{align}
We then have
\begin{align} \notag
\langle T^\perp, T^\perp \rangle = \langle T/C^2, T/C^2 \rangle - \frac{\langle T/C^2, T \rangle^2}{\langle T/, T \rangle} \leq \left(1 - K/\ell\right) \langle T/C^2, T/C^2 \rangle.
\end{align}
The orthogonality condition on $f$ then gives:
\begin{align} \notag
\langle T/C^2, f \rangle^2 = \langle T^\perp, f \rangle^2 \leq \langle T^\perp, T^\perp \rangle \langle f, f \rangle.
\end{align}
Now, we write $f$ in the orthogonal decomposition:
\begin{align}
f = \frac{\langle f, T/C^2 \rangle}{\langle T/C^2, T/C^2 \rangle} T/C^2 + g.
\end{align}
We then have
\begin{align} \notag
\langle f, f \rangle & = \frac{\langle f, T/C^2 \rangle^2}{\langle T/C^2, T/C^2 \rangle} +  \langle g, g \rangle \\ \notag
& \leq (1 - K/\ell) \langle f, f\rangle  + \langle g, g \rangle.
\end{align}
It then immediately follows that 
\begin{align} \notag
K/\ell \langle f, f \rangle \leq \langle g, g \rangle.
\end{align}
The claim then follows directly.
\end{proof}
\end{comment}

\begin{proposition} \label{UnweightedPoincare}
 Let $f$ and $g$ be as in the statement of Proposition \ref{fOrthogonalPart}. Then it holds that 
 \begin{align} \notag
 e_\ell(f)  = e_\ell (g) \geq \left( 1 - \frac{2}{2 + \beta}\right)\left( \int_0^{\ell} g'^2(s) ds  \right)
\end{align}
\end{proposition}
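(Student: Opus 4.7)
The plan is to use the decomposition $f = \alpha \tanh(s) + g(s)$ from Proposition \ref{fOrthogonalPart} to first establish the equality $e_\ell(f) = e_\ell(g)$, thereby reducing the claim to the weighted Poincar\'e inequality already proved in Proposition \ref{PoincareInequalityAgain} applied to $g$.

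The crucial observation for the equality is that $\tanh(s)$ is a \emph{null direction} for the energy $e_\ell$: it lies in the kernel of $\tilde L = \partial_s^2 + 2\cosh^{-2}(s)$, and a direct computation shows it satisfies the Robin boundary condition $\tanh'(\ell)\tanh(\ell) = \tanh(\ell)/\cosh^2(\ell)$ at $s = \ell$. I would polarize $e_\ell$ to the bilinear form
\begin{align*}
b_\ell(u,v) := \int_0^\ell u' v'\, ds - 2\int_0^\ell uv \cosh^{-2}(s)\, ds - u(\ell) v(\ell)\, \tanh'(\ell)/\tanh(\ell),
\end{align*}
so that
\begin{align*}
e_\ell(f) = \alpha^2 b_\ell(\tanh,\tanh) + 2\alpha\, b_\ell(\tanh, g) + e_\ell(g).
\end{align*}
Integrating by parts in the first term of $b_\ell(\tanh, v)$ rewrites it as $-\int_0^\ell v\, \tilde L(\tanh)\, ds$ plus boundary contributions at $s=0$ and $s=\ell$; the boundary term at $\ell$ cancels precisely because $\tanh$ satisfies the Robin condition, and the bulk integral vanishes since $\tilde L(\tanh) = 0$. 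What remains is only a boundary contribution at $s=0$ proportional to $v(0)$, which vanishes whenever $v(0) = 0$. Applying this with $v = \tanh$ (using $\tanh(0) = 0$) and with $v = g$ (using $g(0) = 0$, which follows from $f(0) = 0$) shows that both the $\alpha^2$ and cross terms vanish, yielding $e_\ell(f) = e_\ell(g)$.

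With the equality in hand the conclusion is immediate: by construction in Proposition \ref{fOrthogonalPart} the function $g$ satisfies the weighted orthogonality condition (\ref{WeightedOrthogonalityCondition}), so Proposition \ref{PoincareInequalityAgain} applied directly to $g$ provides the stated lower bound on $e_\ell(g) = e_\ell(f)$ in terms of $\int_0^\ell g'^2\, ds$. The only potential obstacle is verifying the Dirichlet condition $f(0) = 0$ for the functions to which this result will be applied; this should be built into the working class by the odd-in-$s$ symmetry of the inhomogeneous data $\dot\lambda \tanh(s)$ and the resulting oddness of the solutions $u_N$ in $s$. Absent that symmetry one would have to split off an additional constant mode, but no genuinely new analytic input is needed.
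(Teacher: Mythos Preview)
Your approach is essentially identical to the paper's: both polarize $e_\ell$ to a bilinear form, expand along the decomposition $f = \alpha\tanh + g$, and argue that the cross and $\tanh$--$\tanh$ terms vanish because $\tanh$ is in the kernel of $\tilde L$ and satisfies the Robin condition at $\ell$, then invoke Proposition~\ref{PoincareInequalityAgain} on $g$. You supply more detail on the integration by parts (in particular isolating the residual boundary term $-v(0)$), whereas the paper simply asserts the vanishing; your caveat about needing $f(0)=0$ is well taken and is implicitly assumed in the paper as well.
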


\begin{proof}
We have that
\begin{align} \notag
e_\ell(f) = B [f, f].
\end{align}
It is then clear that the right hand side above is a bilinear form and we have
\begin{align} \notag
e_\ell(f)  &  = B[ \alpha \tanh(s) + g,  \alpha \tanh(s) + g ]\\ \notag
& = \alpha^2 B[ \tanh(s),    \tanh(s) ]  + B[ g,   g ]  + 2 \alpha B[ g,   \tanh(s) ]  \\ \notag
& =  B[g,  g  ] \\ \notag
& = e_\ell (g).
\end{align}
where the last equality above follows from the definition of $e(-)$ and $B[ -, - ]$ and where the third line follows from the second since $\tanh(s)$ is in the kernel of $\tilde{L}$ with the boundary conditions  (\ref{SolutionRobinBoundaryCondition}) and (\ref{SolutionNeumanBoundaryCondition}).
\end{proof}

We are now ready to prove Proposition \ref{RBCSolutionsIntegralControl}:

\begin{proof}[Proof of Proposition \ref{RBCSolutionsIntegralControl}]
In the following, in order to conveniently reference  Propositions \ref{PoincareInequality}, \ref{PoincareInequalityAgain} and \ref{UnweightedPoincare} while using their notational conventions, we set
\begin{align} \notag
f (s,z) = u_N(s, z) = \alpha(z) \tanh(s) + g(s, z).
\end{align}
where the multiple $\alpha$ is chosen so that 
\begin{align} \notag
\int_0^{2 \ell} g(s,z) \tanh(s) ds = 0.
\end{align}
Additionally, we will throughout the proof abbreviate $\Lambda : = \Lambda ( \ell, N)$, $\Lambda_0 : = \Lambda( \ell, 2 \pi)$ and we will use `` $'$  '' to denote derivation with respect to $s$. Multiplying the left hand side  of  (\ref{StripSolEq}) by $f$, integrating in $s$ from $0$ to $\ell$ and using the boundary conditions gives:
\begin{align} \notag
I(z)  & :  = \int_0^{\ell} f(s, z) \partial^2_{z z} f (s, z) ds - \int_0^\ell \left(\partial_s f \right)^2 ds  + 2 \int_0^{\ell} f^2 (s, z) \cosh^{-2} (s)ds \\ \notag
&  \quad  + f^2(\ell, z) \tanh'(\ell)/ \tanh( \ell) \\ \notag
& =  \int_0^{\ell} f(s, z) \partial^2_{z z} f (s, z) ds  - e_{\ell} (f(z,- )).
\end{align}
 where the energy $e_{\ell}(-)$ is defined in Definition \ref{Dim1Energy}. Recall that from Proposition  \ref{UnweightedPoincare} we have $ e_{ \ell}(g(z, -))  =  e_{\ell} (f(z, -))$. Proposition \ref{PoincareInequalityAgain} then gives:
\begin{align}  \notag
\left(1 - \frac{2}{2 +\beta} \right) \int_{\Lambda}  g'^2 &  \leq \left| \int_{\Lambda_0} f E  \right|  \\ \notag
& \leq \left( \int_{\Lambda_0}f^2  \right)^{1/2} \left(  \int_{\Lambda_0} E^2   \right)^{1/2} \\ \notag
& \leq \left( \int_{\Lambda_0}g^2  \right)^{1/2} \left(  \int_{\Lambda_0} E^2   \right)^{1/2} \\ \notag
& \leq C \ell \left( \int_{\Lambda_0} g'^2  \right)^{1/2} \left(  \int_{\Lambda_0} E^2   \right)^{1/2}. 
\end{align}
Summarizing the above estimates:
\begin{align} \notag
\int_{\Lambda}  g'^2 \leq C \ell^2 \int_{\Lambda_0} E^2.
\end{align}
We then have
\begin{align} \notag
\int_\Lambda f^2 ds \leq \int_\Lambda g^2 ds \leq C \ell^2 \int_\Lambda g'^2  \leq C \ell^4 \int_{\Lambda} E^2.
\end{align}
Applying H{\" o}lder's inequality to the above inequality then gives:
\begin{align} \notag
\left(\int_\Lambda |f| \right)^2 \leq \ell \int_\Lambda f^2 ds  \leq C \ell^{5}  \int_{\Lambda} E^2.
\end{align}
Recalling the expression for $\alpha$ in (\ref{AlphaExplicit}), we the conclude that 
\begin{align} \notag
|\alpha| \leq C  \int_0^{ \ell} |f|  \leq C \ell^{3/2} \left(\int_{\Lambda} E^2 \right)^{1/2}.
\end{align}
It then follows that 
\begin{align} \notag
\|f': L^1\| \leq \alpha \| \cosh^{-2} (s) : L^1 \| + \| g' : L^1\| \leq C \ell^3 \| E : C^{0, \alpha} (\Lambda_0, 1) \|.
\end{align}
This concludes the proof.

\end{proof}
 
The $L^2$ estimate for the solutions $u_N$ in Proposition \ref{RBCSolutionsIntegralControl} and a maximum principle then immediately translate into existence function $u_\infty$ satisfying the weighted estimate in Proposition \ref{RBCSolutionsWeightedControl}. 

\begin{proposition} \label{LimitSolutions}
There is a function $u_\infty: \Lambda(\ell, \infty) : \rightarrow  R$ such that
\begin{enumerate}
\item \label{LimitSolutionsConvergence} For any compact subset $K \subset \Lambda(\ell, \infty)$, the functions $u_{N}$ converge to $u_{\infty}$ smoothly in $C^{2, \alpha} (K)$. 

\item \label{LimitSolutionsBasic} The function $u_\infty$ satisfies the boundary value problem:
\begin{align} \notag
\tilde{\mathcal{L}} u_\infty = E, \quad u_\infty(0, z) = 0, \quad u_{\infty}' (\ell, z)\tanh(\ell) - u(\ell, z)/\cosh^{-2} (\ell) = 0.
\end{align}

\item \label{LimitSolutionsOrthogonality} $u_\infty$ also satisfies the orthogonality condition
\begin{align} \notag
\int_0^\ell u_\infty(s, z) \tanh(s) ds = 0, \quad \forall z \in R.
\end{align}

\item \label{LimitSolutionsL2} $u_\infty$ satisfies the  estimate
\begin{align} \notag
\| u_\infty : L^1(\Lambda(\ell, \infty))\|, \quad \| \partial_su_\infty : L^1(\Lambda(\ell, \infty)\| \leq C \ell^3 \| E : C^{0, \alpha} (\Lambda(\ell, 2\pi), 1) \|.
\end{align}
\end{enumerate}
\end{proposition}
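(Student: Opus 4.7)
The plan is to extract a subsequential limit of the functions $u_N$ using the uniform integral bounds of Proposition \ref{RBCSolutionsIntegralControl}, and then verify that each of the four conclusions passes through the limit. The first step is to promote the uniform $L^1$ bounds on $u_N$ and $\partial_s u_N$ (which are independent of $N$) to uniform interior $C^{2,\alpha}$ bounds. Fix a compact set $K \subset \Lambda(\ell,\infty)$ and choose $N$ so large that $K$ sits well inside $\Lambda(\ell, N)$. Since $\tilde{\mathcal{L}} = \Delta + 2\cosh^{-2}(s)$ is uniformly elliptic with smooth bounded coefficients, the mean value / weak Harnack inequality applied to $|u_N|$ (treating the zeroth-order term as part of the right-hand side, or equivalently shifting $\tilde{\mathcal{L}}$ by a large constant to recover the maximum principle) gives
\[
\|u_N\|_{L^\infty(K')} \leq C\bigl(\|u_N\|_{L^1(K'')} + \|E\|_{L^\infty(K'')}\bigr)
\]
for nested compact sets $K' \subset K'' \subset \Lambda(\ell, N)$. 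Combined with interior Schauder estimates, this yields a uniform $C^{2,\alpha}(K')$ bound on $u_N$ depending only on $K'$, $\ell$, and $\|E : C^{0,\alpha}(\Lambda(\ell, 2\pi), 1)\|$.

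Armed with these local bounds, I would exhaust $\Lambda(\ell,\infty)$ by an increasing family of compacts, apply Arzel\`a--Ascoli on each together with a standard diagonal argument, and extract a subsequence $u_{N_k}$ converging in $C^{2,\alpha'}_{\mathrm{loc}}$ (for any $\alpha' < \alpha$) to a limit $u_\infty \in C^{2,\alpha}_{\mathrm{loc}}(\Lambda(\ell,\infty))$; this establishes (\ref{LimitSolutionsConvergence}). Because the convergence of $u_{N_k}$ and its first two derivatives is locally uniform, the equation $\tilde{\mathcal{L}} u_{N_k} = E$, the Dirichlet condition $u_{N_k}(0,z) = 0$, and the Robin condition at $s = \ell$ all pass to the limit pointwise, giving (\ref{LimitSolutionsBasic}). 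The Neumann condition at $z = \pm N$ is traded in the limit for a solution defined on the entire infinite strip.

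For the orthogonality assertion (\ref{LimitSolutionsOrthogonality}), I would fix $z \in \mathbb{R}$ and pass to the limit in the identity $\int_0^\ell u_{N_k}(s,z) \tanh(s)\,ds = 0$ from Proposition \ref{SolutionUnweightedOrthogonality}, which is justified by uniform convergence on the compact slice $[0,\ell] \times \{z\}$ for $N_k$ large. For the global integral bounds (\ref{LimitSolutionsL2}), I would apply Fatou's lemma to $|u_{N_k}|$ and $|\partial_s u_{N_k}|$: locally uniform convergence gives pointwise convergence, and Proposition \ref{RBCSolutionsIntegralControl} then implies
\[
\int_{\Lambda(\ell,\infty)} |u_\infty|\,ds\,dz \;\leq\; \liminf_{k \to \infty} \int_{\Lambda(\ell, N_k)} |u_{N_k}|\,ds\,dz \;\leq\; C\ell^3 \|E : C^{0,\alpha}(\Lambda(\ell, 2\pi), 1)\|,
\]
and analogously for $\partial_s u_\infty$.

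The only genuinely nontrivial step is the $L^1 \to L^\infty$ upgrade in the first paragraph; everything afterwards is a routine compactness-and-limit argument. A slight subtlety there is that the zeroth-order coefficient $2\cosh^{-2}(s)$ is positive, so a direct maximum principle is not immediately available; however shifting to $\tilde{\mathcal{L}} - \mu$ for a sufficiently large constant $\mu$ restores sign-definiteness, and standard De Giorgi--Nash--Moser / mean value estimates for the shifted equation then complete the step.
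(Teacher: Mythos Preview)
Your proposal is correct and is precisely the ``standard techniques'' argument the paper invokes in its one-sentence proof; you have simply spelled out the details (the $L^1 \to L^\infty$ upgrade via local elliptic estimates, Schauder, Arzel\`a--Ascoli plus diagonalization, and passage to the limit in the equation, boundary conditions, orthogonality, and integral bounds). One small point: as written you only extract a \emph{subsequential} limit, whereas the statement asserts convergence of the full sequence $u_N$; to close this gap you should observe that the limit is unique (any two subsequential limits solve the same boundary value problem on $\Lambda(\ell,\infty)$ with the same orthogonality constraint and $L^1$ decay, so their difference lies in the kernel identified in Proposition~\ref{WhatsInTheRobinKernel} and is forced to vanish by orthogonality), from which full-sequence convergence follows.
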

\begin{proof}
This is a direct consequence of the uniform $L^2$ estimates recorded for the functions $u_N$ in Proposition \ref{RBCSolutionsIntegralControl} and standard techniques.
\end{proof}

We record the maximum principle below:

\begin{proposition} \label{LimitSolutionsMaximumPrinciple}
The function  
\begin{align} \notag
\bar{S}(w) : = \sup_{(s, z) \in\Lambda(\ell, \infty),  z \geq w}  |u_{\infty} (s, z)|
\end{align}
is monotonically decreasing for $w > 2 \pi$. 
\end{proposition}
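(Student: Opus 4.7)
The proposition contains two assertions: that $\bar S(w)<\infty$ for every $w>2\pi$, and that $w\mapsto\bar S(w)$ is non-increasing. The second is automatic from set containment---if $w_1<w_2$, the set $\{(s,z)\in\Lambda(\ell,\infty):z\ge w_2\}$ is contained in $\{z\ge w_1\}\cap\Lambda(\ell,\infty)$, so the supremum over the former cannot exceed that over the latter. The content to establish is therefore the finiteness, which I will obtain by proving the pointwise decay $|u_\infty(s,z)|\to 0$ as $z\to\infty$. The setup: for $z>2\pi$ one has $E\equiv 0$, so $\tilde{\mathcal L}u_\infty=0$; the Dirichlet, Robin, and orthogonality conditions of Proposition \ref{LimitSolutions} hold on every $z$-slice; and $u_\infty\in L^1(\Lambda(\ell,\infty))$ by Proposition \ref{LimitSolutions}~(\ref{LimitSolutionsL2}).

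The heart of the argument is an energy--convexity estimate for the slice-$L^2$ mass
\[
F(z):=\int_0^\ell u_\infty^2(s,z)\,ds.
\]
Differentiating twice in $z$, substituting $\partial_z^2 u_\infty=-\tilde L u_\infty$, and integrating by parts in $s$ using the Dirichlet and Robin boundary conditions yields
\[
F''(z)=2\int_0^\ell(\partial_z u_\infty)^2\,ds+2\,e_\ell\bigl(u_\infty(\cdot,z)\bigr),
\]
with $e_\ell$ as in Definition \ref{Dim1Energy}. The orthogonality condition $\int_0^\ell u_\infty(\cdot,z)\tanh(s)\,ds=0$ from Proposition \ref{LimitSolutions}~(\ref{LimitSolutionsOrthogonality}), combined with the decomposition $u_\infty=\alpha(z)\tanh+g$ of Proposition \ref{fOrthogonalPart}, the identity $e_\ell(u_\infty)=e_\ell(g)$ from Proposition \ref{UnweightedPoincare}, the estimate of Proposition \ref{PoincareInequalityAgain}, and the standard one-dimensional Poincare inequality on $[0,\ell]$ (applicable to $g$ because $g(0,z)=u_\infty(0,z)=0$) together yield $e_\ell(u_\infty(\cdot,z))\ge c\,F(z)$ for a positive constant $c$. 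Hence $F''(z)\ge c\,F(z)\ge 0$ on $(2\pi,\infty)$, so in particular $F$ is convex there.

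To close, I combine convexity of $F$ with the $L^1$ bound. Cauchy--Schwarz in $s$ and Proposition \ref{LimitSolutions}~(\ref{LimitSolutionsL2}) give $\int_{2\pi}^\infty\sqrt{F(z)}\,dz<\infty$. A non-negative convex function on a half-line whose square root is integrable cannot grow, so it must be monotonically decreasing to zero; the ODE inequality then upgrades this to exponential decay by comparing with the factor $F'+\sqrt c\,F$, which satisfies $(F'+\sqrt c F)'\ge\sqrt c(F'+\sqrt c F)$ and must therefore be non-positive lest $F$ blow up. Finally, interior $L^2\to L^\infty$ estimates (Moser iteration) applied to the homogeneous equation $\tilde{\mathcal L}u_\infty=0$ on unit $(s,z)$-balls promote the $L^2$ decay of $F$ to the pointwise decay $\sup_{s\in[0,\ell]}|u_\infty(s,z)|\to 0$, establishing finiteness of $\bar S(w)$ for every $w>2\pi$ and completing the proof. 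The main obstacle is that $\tilde{\mathcal L}$ carries the \emph{positive} zero-th order term $2\cosh^{-2}(s)$, which precludes any direct maximum principle for $|u_\infty|$; the orthogonality condition on $u_\infty$, inherited from that of $E$, is exactly what allows the quadratic form $e_\ell$ to be coercive on the relevant subspace and thus allows the Poincare-type inequalities of the preceding section to overwhelm that positive term and drive the decay.
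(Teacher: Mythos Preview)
Your proof is correct, and your opening observation is apt: the non-increasing property of $\bar S$ is automatic from set containment, so the real content is finiteness (and, as both you and the paper implicitly use downstream in Proposition~\ref{RBCSolutionsWeightedControl}, the integrability/decay of $\bar S$). Your route to that content, however, is genuinely different from the paper's.

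Contrary to your closing remark, the paper \emph{does} argue by a maximum principle---the trick being to compare not with constants but with multiples of the kernel element $\tanh(s)$. Concretely, the paper shows that if $u_\infty$ had a positive maximum on $\{z\ge w\}$ in the interior or on the lateral boundary $\{s=\ell\}$, then for a suitable $m$ the function $g=m\tanh(s)-u_\infty$ would satisfy $\tilde{\mathcal L}g=0$ together with the Robin condition and yet have a forbidden interior extremum. This forces the supremum to be realized on the slice $z=w$, whence $\bar S(w)\le\int_0^\ell|\partial_s u_\infty(s,w)|\,ds$; integrating in $w$ and invoking the $L^1$ bound on $\partial_s u_\infty$ from Proposition~\ref{LimitSolutions} gives integrability of $\bar S$ directly. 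Your energy approach---$F''\ge cF$ via the Poincar\'e machinery of Propositions~\ref{PoincareInequality}--\ref{UnweightedPoincare}, then exponential decay, then Moser iteration---is more analytic and arguably more robust (it would survive perturbations of the operator that destroy the explicit kernel), at the cost of invoking heavier tools at the end.

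One small slip: the step ``Cauchy--Schwarz in $s$'' does not yield $\int\sqrt{F}\,dz<\infty$ from the $L^1$ bound on $u_\infty$ alone, since Cauchy--Schwarz runs the wrong way ($\|u_\infty(\cdot,z)\|_{L^1_s}\le\sqrt\ell\,\sqrt{F(z)}$). What works is the $L^1$ bound on $\partial_s u_\infty$ together with the Dirichlet condition at $s=0$: the fundamental theorem of calculus gives $|u_\infty(s,z)|\le\int_0^\ell|\partial_s u_\infty(\cdot,z)|$, hence $\sqrt{F(z)}\le\sqrt\ell\int_0^\ell|\partial_s u_\infty(\cdot,z)|\,ds$, and integrating in $z$ yields $\int\sqrt{F}\,dz\le\sqrt\ell\,\|\partial_s u_\infty\|_{L^1(\Lambda(\ell,\infty))}<\infty$. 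With this correction your argument goes through.
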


\begin{proof}
Suppose  there is $(s_0, z_0) \in \Lambda(\ell, \infty)$ so that $u_\infty(s, z)$ obtains a positive maximum  on $\Lambda^+ : = \Lambda(\ell, \infty) \cap\{ z \geq w  \}$ and assume that  $(s_0, z_0)$ is in the interior of $\Lambda^+$.  Then there is a multiple $m$ of $\tanh(s)$ and $p$ in the interior of  $\Lambda^+$ so that  $g (s, z) : = m \tanh(s) - u_{\infty} (s, z)$ is positive on  $\Lambda^+ \setminus \{ p\} $ and so that $g (p) = 0$, which gives a contradiction. Thus, $(s_0, z_0)$ must be on the boundary of $\Lambda^+$. Suppose now that $(s_0, z_0) = (\ell, z_0)$ for $z_0 > w$. We again choose $m$ so that 
\begin{align} \notag
 g (s, z) : = m \tanh(s) - u_{\infty}(s, z)
 \end{align}
 has the property that 
\begin{align} \notag
g(\ell, z_0) = 0, \quad g(\ell, z) \geq 0.
\end{align}
Observe that $g$ satisfies the Robin boundary condition (\ref{SolutionRobinBoundaryCondition}) at $s = \ell$ and we have
\begin{align} \notag
\Delta g + 2 \cosh^{-2}(s) g= g_{z z} + g_{ss} +   2 \cosh^{-2}(s) g =  0.
\end{align}

At $(\ell, z_0)$ we then get
\begin{align} \notag
g_s \geq 0, \quad g_{s s} < 0.
\end{align}
Thus, $g$ has an interior minimum at some point $p \in \Lambda^+$ and is non-negative  on the boundary of $\Lambda^+$, which gives a contradiction. We then conclude that if $\bar{S}(w)$ is finite, then it holds that 
\begin{align} \notag
\bar{S} (w)  = |u_\infty (s_w, w)|
\end{align}
for some $s_0 \in [0, \ell]$, from which it follows that $\bar{S} (w)$ is either monotonically decreasing or increasing in $|w|$. To see that  $\bar{S}(w)$ is a monotonically decreasing in $|w|$, observe that 
\begin{align} \notag
\bar{S} (w) = |u_\infty(s_w, w)| \leq \int_0^\ell  \left|u'_\infty(s, w)  \right| ds
\end{align}
Integrating in $w$ then gives that 
\begin{align}
\int_{-\infty}^\infty \bar{S} (w) \leq \int_{\Lambda(\ell)}  \left|u'_\infty  \right| \leq C \ell^3 \| E : C^{0, \alpha} (\Lambda(\ell, 2\pi), 1) \|.
\end{align}
 This completes the proof of Proposition \ref{LimitSolutionsMaximumPrinciple}.
\end{proof}

We can now prove Proposition \ref{RBCSolutionsWeightedControl}:

\begin{proof}[Proof of Proposition \ref{RBCSolutionsWeightedControl}]
In the proof of Proposition \ref{LimitSolutionsMaximumPrinciple} above, we saw that the function $\bar{S} (w)$ is an integrable function of $w$ on $(-\infty, \infty)$, which is monotonic on the the sets $(-\infty, 2 \pi)$ and $(2\pi, \infty)$. It then immediately follows that $\bar{S}(w)$ satisfies the bound
\begin{align} \notag
|\bar{S}(w)| \leq C/(1 + |w|).
\end{align}
In fact,  a stronger estimates hold, but this will suffice for our purposes. 
\end{proof}

 \section{Proof of Proposition \ref{MainInvertibilityStatement}}
 
 Proposition \ref{MainInvertibilityStatement} is a  direct consequence of Proposition \ref{RBCSolutionsWeightedControl} and the convex pinching property of the scale functions. 

\begin{proposition}\label{ZGoodInterval}
There is a constant $A$ so that, given $z_0 \in [0, 1]$, then for all $z \in [0, 1]$ with 
\begin{align} \notag
|z - z_0| \leq A \lambda^{\epsilon} (z_0)
\end{align}
it holds that 
\begin{align}
\left|\frac{\lambda(z)}{\lambda(z_0)} - 1 \right| \leq 1/2.
\end{align}
\end{proposition}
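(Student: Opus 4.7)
The plan is to prove this by a standard Gronwall-style bootstrap argument: first I would use the chain rule to translate the hypothesis $|\dot\lambda|\le C_1\lambda^\epsilon$ (a bound in the $\sigma$-variable) into a bound on $\lambda'(z)$, then integrate on the maximal subinterval where the conclusion already holds.

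More concretely, using the identity $\Phi'(z)=\lambda(z)\dot\Phi(z)$ stated in the outline with $\Phi=\lambda$ gives
\[
|\lambda'(z)|=\lambda(z)|\dot\lambda(z)|\le C_1\lambda(z)^{1+\epsilon}.
\]
Let $J\subset[0,1]$ be the connected component of $\{z:|\lambda(z)/\lambda(z_0)-1|\le 1/2\}$ containing $z_0$. On $J$ we have $\lambda(z)\le \tfrac{3}{2}\lambda(z_0)$, hence $|\lambda'(z)|\le C_1(3/2)^{1+\epsilon}\lambda(z_0)^{1+\epsilon}$. Integrating from $z_0$ to any $z\in J$ yields
\[
\bigl|\lambda(z)-\lambda(z_0)\bigr|\le C_1(3/2)^{1+\epsilon}\lambda(z_0)^{1+\epsilon}|z-z_0|.
\]

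For any $z\in J$ with $|z-z_0|\le A\lambda^\epsilon(z_0)$, this gives
\[
\Bigl|\frac{\lambda(z)}{\lambda(z_0)}-1\Bigr|\le C_1(3/2)^{1+\epsilon}A\,\lambda(z_0)^{2\epsilon}\le C_1(3/2)^{1+\epsilon}A\,C_0^{2\epsilon}.
\]
Choosing $C_0$ (depending on $C_1$, $\epsilon$, $A$) small enough so that the right-hand side is at most $1/4$, the conclusion $|\lambda/\lambda(z_0)-1|\le 1/2$ is strictly improved to $\le 1/4$ throughout $J\cap\{|z-z_0|\le A\lambda^\epsilon(z_0)\}$. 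By continuity of $\lambda$, the endpoint of $J$ in this range cannot be an interior point of $[z_0-A\lambda^\epsilon(z_0),z_0+A\lambda^\epsilon(z_0)]\cap[0,1]$ unless $|\lambda/\lambda(z_0)-1|=1/2$ there, which contradicts the strict bound just obtained. Hence $J$ contains the whole interval $|z-z_0|\le A\lambda^\epsilon(z_0)$, which is the desired conclusion.

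There is no real obstacle here; the only point requiring care is ensuring that the closing of the bootstrap does not demand an unavailable smallness (it does not, since the $C_0$ in the statement of Theorem \ref{MainTheorem} is free to be chosen as small as we wish in terms of $\epsilon$ and $C_1$, which is exactly where the factor $C_0^{2\epsilon}$ appearing above gets absorbed). The constant $A$ is therefore a free parameter that may be fixed at the outset once $\epsilon$ and $C_1$ are given, with $C_0$ chosen afterwards.
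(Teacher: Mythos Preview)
Your argument is correct for the statement as written, and it is more direct than the paper's route. The paper first proves an auxiliary result in the $\sigma$-variable (Proposition~\ref{SigmaGoodInterval}, giving a good $\sigma$-interval of length $\gtrsim\lambda^{1-\epsilon}(\sigma_0)$) and then transfers it to $z$ via the change of variables $dz=d\sigma/\lambda$; you bypass this by working directly in $z$ with the identity $|\lambda'|=\lambda|\dot\lambda|\le C_1\lambda^{1+\epsilon}$ and a continuity/bootstrap argument.

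One remark worth making: the exponent in the statement (and in the final displayed line of the paper's own proof) is a slip. The paper's computation actually produces a good $z$-interval of length $\gtrsim\lambda^{-\epsilon}(z_0)$, which is \emph{large} since $\lambda$ is small, and this is the version used downstream: see $\rho_j:=A\lambda_j^{-\epsilon}/6$ in Definition~\ref{DualDomains} and the hypothesis $|z_k-z_j|\ge A\lambda_j^{-\epsilon}$ in Proposition~\ref{DualObjectProperties}. Your own inequality already yields this stronger conclusion, and without appealing to smallness of $C_0$: from
\[
\Bigl|\frac{\lambda(z)}{\lambda(z_0)}-1\Bigr|\le C_1\Bigl(\tfrac{3}{2}\Bigr)^{1+\epsilon}\lambda(z_0)^{\epsilon}\,|z-z_0|,
\]
the right-hand side is $\le 1/4$ whenever $|z-z_0|\le A\,\lambda(z_0)^{-\epsilon}$ with $A=\bigl(4C_1(3/2)^{1+\epsilon}\bigr)^{-1}$, so the bootstrap closes with $A$ determined by $C_1$ and $\epsilon$ alone. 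That matches how the constant is deployed later in the paper, whereas the version you proved (with $\lambda^{\epsilon}$ and $C_0$ absorbed) is weaker than what is actually needed.
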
 
 
 \begin{proposition} \label{SigmaGoodInterval}
 For $\sigma_0 \in (0, 1)$, let $\delta = \delta (\sigma_0)$ be maximal so that: For $\sigma \in (0, 1)$ satisfying $|\sigma- \sigma_0| \leq \delta$ it holds that 
 \begin{align} \notag
 \left| \frac{\lambda (\sigma)}{\lambda(\sigma_0)}  -1 \right| \leq 1/2.
 \end{align}
 Then we have the following estimate:
 \begin{align} \notag
 \delta(\sigma_0)  \geq 3 \lambda^{1 - \epsilon} (\sigma_0). 
 \end{align}
 \end{proposition}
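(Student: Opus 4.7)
The plan is a one-line bootstrap argument powered only by the first-order assumption $|\dot{\lambda}| \leq C_1 \lambda^\epsilon$ from \eqref{ScaleFunctionAssumptions}; the higher derivative bounds are not needed for this proposition. Set $\lambda_0 := \lambda(\sigma_0)$ and let $J \subset (0,1)$ denote the maximal open interval containing $\sigma_0$ on which $\lambda(\sigma) \leq \tfrac{3}{2}\lambda_0$. (The complementary lower bound $\lambda(\sigma) \geq \tfrac{1}{2}\lambda_0$ will fall out of the integration below, so it is enough to bootstrap against the one-sided inequality.) On $J$, monotonicity of $x \mapsto x^\epsilon$ gives
\begin{align} \notag
|\dot{\lambda}(\sigma)| \leq C_1 \lambda(\sigma)^\epsilon \leq C_1 (3/2)^\epsilon \lambda_0^\epsilon,
\end{align}
so $\dot{\lambda}$ is controlled on $J$ by a constant multiple of $\lambda_0^\epsilon$.

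Next I integrate. For $\sigma \in J$, the mean value theorem yields
\begin{align} \notag
|\lambda(\sigma) - \lambda_0| \leq C_1 (3/2)^\epsilon \lambda_0^\epsilon \,|\sigma - \sigma_0|.
\end{align}
In particular, if $|\sigma - \sigma_0| \leq 3 \lambda_0^{1-\epsilon}$, then
\begin{align} \notag
|\lambda(\sigma)/\lambda_0 - 1| \leq 3 C_1 (3/2)^\epsilon \lambda_0^\epsilon \cdot \lambda_0^{-\epsilon} = 3 C_1 (3/2)^\epsilon,
\end{align}
and this last quantity is $\leq 1/2$ once $C_1$ is absorbed against the numerical constant $3$ (equivalently, after the standing convention that $C_1$ is taken small enough in terms of $\epsilon$, or after weakening the statement's $3$ to a constant depending on $C_1$ and $\epsilon$).

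To close the bootstrap I use continuity of $\lambda$: the preceding estimate is \emph{strict} on any closed subinterval of $J$ of length $\leq 3\lambda_0^{1-\epsilon}$, so the maximal interval $J$ cannot terminate inside $[\sigma_0 - 3\lambda_0^{1-\epsilon}, \sigma_0 + 3\lambda_0^{1-\epsilon}] \cap (0,1)$ — otherwise at the endpoint one would simultaneously have $\lambda(\sigma) = \tfrac{3}{2}\lambda_0$ and $|\lambda(\sigma)/\lambda_0 - 1| < 1/2$, a contradiction. Therefore $J$ contains the full interval of radius $3\lambda_0^{1-\epsilon}$ about $\sigma_0$, proving $\delta(\sigma_0) \geq 3\lambda_0^{1-\epsilon}$.

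The only delicate point is the bookkeeping of constants: the factor $3$ on the right-hand side is not universal but depends on the interplay between $C_1$ and $(3/2)^\epsilon$. I expect no genuine analytic obstacle — the proposition is really a quantitative restatement of the fact that the integral curves of an ODE with right-hand side of size $\lambda^\epsilon$ cannot double or halve $\lambda$ on time scales much shorter than $\lambda^{1-\epsilon}$.
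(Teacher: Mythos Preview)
Your proof is correct and follows essentially the same route as the paper: both integrate the bound $|\dot{\lambda}| \leq C_1\lambda^\epsilon$ over the interval on which $\lambda$ stays within a factor $3/2$ of $\lambda_0$, the only cosmetic difference being that the paper invokes maximality of $\delta$ at an endpoint (where $|\lambda-\lambda_0|=\lambda_0/2$) and solves for $\delta$, rather than phrasing the closure as a bootstrap. Your caveat about the constant $3$ is well taken --- the paper's own arithmetic has the same slip (from $\lambda_0/2 \leq (3/2)\,\delta\,\lambda_0^\epsilon$ it writes $\delta \geq 3\lambda_0^{1-\epsilon}$ rather than $\delta \geq (1/3)\lambda_0^{1-\epsilon}$, and $C_1$ is silently absorbed), and the specific numerical constant is immaterial downstream.
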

 \begin{proof}
 For $\sigma$ as in the statement of the proposition, we have
 \begin{align} \notag
 |\lambda (\sigma) - \lambda(\sigma_0)| & = \left| \int_{\sigma_0}^\sigma \dot{\lambda} (\sigma') d \sigma' \right|  \\ \notag
 & \leq \delta \sup_{\sigma} |\dot{\lambda} (\sigma)| \\ \notag
 & \leq 3/2\delta (\lambda (\sigma_0))^\epsilon,
 \end{align}
 where in the last line above we have used the convex pinching assumptions in (\ref{ScaleFunctionAssumptions}). With $\sigma = \sigma_0 + \delta$ and using the maximality of $\delta$ we get that 
 \begin{align} \notag
  |\lambda (\sigma) - \lambda(\sigma_0) | = \lambda(\sigma_0)/2 \leq 3/2 \delta \lambda^\epsilon(\sigma_0).
 \end{align}
 It then follows directly that 
 \begin{align} \notag
 \delta \geq 3 \lambda^{1 - \epsilon} (\sigma_0).
 \end{align}
This completes the proof.
 \end{proof}

\begin{proof}[Proof of Proposition \ref{ZGoodInterval}]
Let $\sigma_0$ and $\delta = \delta(\sigma_0)$ be as in Proposition \ref{SigmaGoodInterval} and set $z_0 = z(\sigma_0)$, $z = z (\sigma)$. Then for $|\sigma - \sigma_0| \leq \delta$ we have from Proposition \ref{SigmaGoodInterval} that
\begin{align} \notag
\left|\frac{\lambda (z)}{\lambda(z_0)} - 1 \right| \leq 1/2.
\end{align} 
\begin{align} \notag
z - z_0  = \int_{\sigma_0}^\sigma \frac{d \sigma'}{\lambda(\sigma')} =  \frac{1}{\lambda(\sigma_0)} \int_{\sigma_0}^\sigma \frac{\lambda (\sigma_0)}{\lambda(\sigma')} d \sigma' \geq 2/3 \frac{|\sigma - \sigma_0|}{\lambda(\sigma_0)}
\end{align}
with $\sigma = \sigma_0 + \delta$, we then get
\begin{align} \notag
|z - z_0|& \geq 2/3 \frac{\delta}{\lambda(\sigma_0)}  \\ \notag
& \geq 2/3 \frac{\lambda^{1 - \epsilon} (\sigma_0)}{\lambda(\sigma_0)} \\ \notag
& \geq 2/3 \lambda^{\epsilon} (\sigma_0).
\end{align}
This completes the proof.
\end{proof}

\begin{definition}
We set
\begin{align} \notag
z_j : = j \pi, \quad \lambda_j : = \lambda(z_j), \quad \ell_j : = \underline{\ell} (z_j) \quad j \in \mathbb{N}.
\end{align}

Additionally, we  let $\Lambda_j$ be the domain given as follows:
\begin{align} \notag
\Lambda_j : = \{ (s, z): |z| \leq 2 \pi, |s| \leq 2 \ell_j\}.
\end{align}
We also let $\{\psi_j(z)\}$ be a smooth partition of unity on $R$ such that 
\begin{enumerate}
\item It holds that 
\begin{align} \notag
\psi_j(z + \pi) = \psi_{j + 1} (z)
\end{align}
\item $\psi_0(z)$ is an even function of $z$.
\item The support of $\psi_0$ is contained in the interval $[-3/2\pi, 3/2 \pi]$.
\end{enumerate}
\end{definition}

\begin{proposition} \label{PartitionedSolutionPropogated}
Set $E_j(s, z): = \psi_j (z) E(s, z)$.  Then there is a function $v_j: \Lambda_j \rightarrow R$ so that 
\begin{enumerate}
\item It holds that 
\begin{align} \notag
\tilde{\mathcal{L}} v_j = E_j.
\end{align}
\item  \label{PartitionedSolutionEstimate} There is universal constant $K$ (depending only of the $C^{2, \alpha}$ norm of $\psi_0$ and the invertbility constant in Proposition \ref{RBCSolutionsWeightedControl}) so that: The function $v_j$ satisfies the weighted Holder estimate:
\begin{align} \notag
\left\| v_j: C^{2, \alpha} \left(\Lambda(2 \ell_j), \frac{1}{1 + |z -z_j|}\right) \right\| \leq K \lambda^{\epsilon_1}_j,
\end{align}
where $\epsilon_1 : = (1 - 10 \tau) \epsilon_0$.
\end{enumerate}
\end{proposition}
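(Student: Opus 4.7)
The plan is to reduce Proposition \ref{PartitionedSolutionPropogated} to Proposition \ref{RBCSolutionsWeightedControl}, applied at scale $\ell = 2\ell_j$, after translating in $z$ by $z_j$ and splitting $E_j$ into components orthogonal and parallel to $\tanh(s)$.  First set $\hat{E}_j(s,z) := E_j(s, z+z_j)$; its $z$-support lies in $[-3\pi/2, 3\pi/2] \subset [-2\pi, 2\pi]$, matching the hypothesis of Proposition \ref{StripInvertibilityWithRBC}.  Proposition \ref{ZGoodInterval} together with (\ref{ScaleFunctionAssumptions}) implies $\lambda(z+z_j)\sim \lambda_j$ and $\underline{\ell}(z+z_j)\leq 2\ell_j$ on this range, so the weighted bound (\ref{ELambdaWeightedEstimate}) localizes to
\begin{align*}
\|\hat{E}_j : C^{0,\alpha}(\Lambda(2\ell_j, 2\pi), 1)\| \leq C\lambda_j^{\epsilon_0}\gamma.
\end{align*}

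Next decompose $\hat{E}_j(s,z) = \hat{E}_j^\perp(s,z) + \beta_j(z)\tanh(s)$, with
\begin{align*}
\beta_j(z) := \Big(\int_0^{2\ell_j}\hat{E}_j(s,z)\tanh(s)\,ds\Big)\Big/\Big(\int_0^{2\ell_j}\tanh^2(s)\,ds\Big),
\end{align*}
chosen so that $\hat{E}_j^\perp$ obeys the orthogonality hypothesis of Proposition \ref{StripInvertibilityWithRBC}; note that $\beta_j$ inherits the compact $z$-support of $\psi_j(\cdot + z_j)$.  Applying Proposition \ref{RBCSolutionsWeightedControl} to $\hat{E}_j^\perp$ with $\ell = 2\ell_j$ yields a solution $w_j^\perp$ on $\Lambda(2\ell_j, \infty)$ satisfying
\begin{align*}
\big\|w_j^\perp : C^{2,\alpha}\big(\Lambda(2\ell_j), \tfrac{1}{1+|z|}\big)\big\| \leq C\ell_j^3\lambda_j^{\epsilon_0}\gamma = C\lambda_j^{\epsilon_0 - 3\tau\epsilon}\gamma \leq K\lambda_j^{\epsilon_1}\gamma,
\end{align*}
where the last step uses $\epsilon_0 - 3\tau\epsilon \geq (1-10\tau)\epsilon_0 = \epsilon_1$, a polynomial inequality in $\tau,\epsilon$ that holds once $\tau$ is fixed sufficiently small.

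For the parallel piece, the ansatz $w_j^{\parallel}(s,z) := \alpha_j(z)\tanh(s)$ reduces $\tilde{\mathcal L} w_j^{\parallel} = \beta_j(z)\tanh(s)$ to the one-dimensional ODE $\alpha_j'' = \beta_j$, since $\tilde L\tanh\equiv 0$.  The main obstacle is producing an $\alpha_j$ compatible with the required $z$-decay: because $\beta_j$ is only compactly supported but has no a priori vanishing moments, naive double integration produces linear growth at infinity.  I would bypass this by solving the ODE on the compact $z$-interval matching $\Lambda_j$ with homogeneous Dirichlet data, obtaining $\|\alpha_j\|_{C^{2,\alpha}} \leq C\|\beta_j\|_{C^{0,\alpha}} \leq C\lambda_j^{\epsilon_0}\gamma$; since $|\tanh|, |\tanh'|, |\tanh''|$ are bounded and the weight $1/(1+|z-z_j|)$ is bounded above by $1$ on $\Lambda_j$, this gives the required weighted bound for $w_j^{\parallel}$ as well.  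Setting $v_j(s,z) := w_j^\perp(s, z-z_j) + w_j^{\parallel}(s, z-z_j)$ and undoing the translation produces a $v_j$ satisfying $\tilde{\mathcal L}v_j = E_j$ together with the claimed weighted estimate, with $K$ depending only on the invertibility constant of Proposition \ref{RBCSolutionsWeightedControl} and the $C^{2,\alpha}$ norm of $\psi_0$.
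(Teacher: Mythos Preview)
Your orthogonal component is handled correctly and matches the paper's use of Proposition \ref{RBCSolutionsWeightedControl}. The gap is in the parallel component. The weighted estimate in part (2) is taken over $\Lambda(2\ell_j)=\Lambda(2\ell_j,\infty)$, the full infinite strip, not over the compact box $\Lambda_j$; moreover, downstream in the argument $v_j$ is multiplied by $\psi_j^*$, whose $z$-support has width $\rho_j\sim\lambda_j^{-\epsilon}\gg 2\pi$. So $v_j$ must be a genuine solution of $\tilde{\mathcal L}v_j=E_j$ on the whole strip, with $1/(1+|z-z_j|)$ decay. Solving $\alpha_j''=\beta_j$ with Dirichlet data on $[-2\pi,2\pi]$ produces an $\alpha_j$ whose derivative need not vanish at the endpoints; the zero extension of $w_j^\parallel$ is then not $C^2$, so $\tilde{\mathcal L}w_j^\parallel$ acquires boundary terms and no longer equals $\beta_j(z)\tanh(s)$ on the strip. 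In short, you correctly diagnosed the obstruction (the missing moments of $\beta_j$) but your workaround only yields a local solution, which is not what the proposition asserts or what is needed later.

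The paper resolves this by first modifying $\hat E_j$: it subtracts $\bar a_j\,\tilde{\mathcal L}\bar f+\bar b_j\,\tilde{\mathcal L}\bar g$, where $\bar f,\bar g$ are compactly supported in $z$ and built from the second $\tilde L$-kernel element $s\tanh(s)-1$ (Definition \ref{CoFunction}, Proposition \ref{ProjectingFunctions}). The constants $\bar a_j,\bar b_j$ are chosen so that the resulting projection $a(z)$ onto $\tanh(s)$ has vanishing zeroth and first $z$-moments (Proposition \ref{PreppedErrorTerm}). Those two moment conditions are exactly what force the double antiderivative $A(z)=\int_{-\infty}^z\int_{-\infty}^{z'}a$ to be compactly supported, so $h(s,z)=A(z)\tanh(s)$ is a global, compactly-$z$-supported solution of $\tilde{\mathcal L}h=a(z)\tanh(s)$. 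This is the missing idea in your argument; with it, the rest of your outline (apply Proposition \ref{RBCSolutionsWeightedControl} to the orthogonal remainder, then reassemble and translate back) goes through.
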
 
Before proving Proposition \ref{PartitionedSolutionPropogated}, we observe a few facts. 

\begin{proposition} \label{DualKernelFunction}
It holds that 
\begin{align} \notag
\tilde{L} (s \tanh(s) - 1) = 0.
\end{align}
\end{proposition}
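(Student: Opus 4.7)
The plan is a direct verification supported by the reduction-of-order heuristic that produces this candidate in the first place. The motivation is that the excerpt has just recorded that $\tanh(s)$ spans the $s$-direction kernel of $\tilde L = \partial_{ss}^2 + 2\cosh^{-2}(s)$, so a second linearly independent solution of the one-dimensional ODE $\tilde L y = 0$ is obtained by reduction of order as $y_2(s) = \tanh(s)\int \tanh^{-2}(s')\,ds'$. A short computation gives
\begin{equation*}
\int \tanh^{-2}(s)\,ds = \int\bigl(\operatorname{csch}^2(s) + 1\bigr)\,ds = s - \coth(s),
\end{equation*}
so $y_2(s) = \tanh(s)\bigl(s - \coth(s)\bigr) = s\tanh(s) - 1$, which is exactly the function in the statement. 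This already proves the proposition, but for a self-contained write-up I would simply verify the identity by direct differentiation.

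Concretely, I would set $f(s) := s\tanh(s) - 1$ and compute
\begin{equation*}
f'(s) = \tanh(s) + \frac{s}{\cosh^2(s)}, \qquad f''(s) = \frac{2}{\cosh^2(s)} - \frac{2s\sinh(s)}{\cosh^3(s)}.
\end{equation*}
Substituting into $\tilde L f = f'' + 2\cosh^{-2}(s) f$ gives
\begin{equation*}
\tilde L f = \frac{2}{\cosh^2(s)} - \frac{2s\sinh(s)}{\cosh^3(s)} + \frac{2s\tanh(s)}{\cosh^2(s)} - \frac{2}{\cosh^2(s)} = 0,
\end{equation*}
where the two middle terms cancel because $\tanh(s)/\cosh^2(s) = \sinh(s)/\cosh^3(s)$.

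There is no real obstacle here; the statement is a one-line ODE identity and the only mildly annoying part is keeping track of signs in the $f''$ computation. The content of the proposition is not the identity itself but the role this second kernel element will play in the sequel: together with $\tanh(s)$ it provides a fundamental pair for the ODE $\tilde L y = 0$, which is presumably needed in the proof of Proposition \ref{PartitionedSolutionPropogated} both to construct an explicit integral representation of the inverse of $\tilde L$ on the strips $\Lambda_j$ (via a variation-of-parameters Green's function built from $\tanh(s)$ and $s\tanh(s)-1$) and to extract the orthogonality projections that make the decay in $|z - z_j|$ from Proposition \ref{RBCSolutionsWeightedControl} applicable after cutting off with $\psi_j$.
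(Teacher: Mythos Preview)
Your proof is correct; the paper states this proposition without proof, treating it as an elementary identity, and your direct differentiation (equivalently, your reduction-of-order derivation from the known kernel element $\tanh(s)$) is exactly the intended verification.
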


\begin{definition} \label{CoFunction}
Let $\varphi (s): R \rightarrow R$ denote a fixed smooth function such that 
\begin{enumerate}
\item For $s \geq 2$ it holds that $\varphi (s) = s \tanh(s) - 1$.
\item $\varphi(s)$ is an odd function of $s$: $\varphi (s) = - \varphi (-s)$.
\end{enumerate}
\end{definition}

\begin{proposition} \label{ProjectingFunctions}
Let $\psi(z)$ be fixed smooth cutoff function with support on the interval $[-2 \pi, 2\pi]$ and set
\begin{align} \notag
\bar{f}(s, z) = \psi (z) \varphi (s), \quad \bar{g}(s,z) = z\psi (z) \varphi(s). 
\end{align}
Then the following statements hold:
\begin{enumerate}
\item For $\ell$ sufficiently large it holds that 
\begin{align} \notag
\left| \int_{R^2} \left( \tilde{\mathcal{L}} \bar{f}\right) \tanh(s) \right|  > 1/2,  \quad \left| \int_{R^2} \left( \tilde{\mathcal{L}} \bar{g}\right)  z \tanh(s) \right| >  1/2 .
\end{align}
\item It holds that 
\begin{align} \notag
\int_{R^2} \left( \tilde{\mathcal{L}} \bar{f}\right)  z \tanh(s)  = \int_{R^2} \left( \tilde{\mathcal{L}} \bar{g}\right)  \tanh(s) = 0.
\end{align}

\end{enumerate}
\end{proposition}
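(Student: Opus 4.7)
The plan is to reduce each of the four two-dimensional integrals to a product of one-dimensional integrals via Fubini, exploiting two structural facts: by Proposition \ref{DualKernelFunction}, $s\tanh(s)-1 \in \ker \tilde L$, so by Definition \ref{CoFunction} the function $\tilde L\varphi(s)$ is compactly supported in $[-2,2]$; and $\tanh(s) \in \ker \tilde L$ as well. I will also assume $\psi$ is even, in keeping with the convention for $\psi_0$ in Section \ref{Preliminaries} and with the intended use in Proposition \ref{PartitionedSolutionPropogated}.

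First I would write out
\[
\tilde{\mathcal{L}}\bar f = \psi''(z)\varphi(s) + \psi(z)\tilde L\varphi(s), \qquad \tilde{\mathcal{L}}\bar g = \bigl(2\psi'(z) + z\psi''(z)\bigr)\varphi(s) + z\psi(z)\tilde L\varphi(s),
\]
so that each of the four target integrals splits into two product terms. In every one of the four cases, the coefficient of the potentially troublesome factor $\int \varphi(s)\tanh(s)\,ds$ is a $z$-integral of a total derivative — namely $\int \psi''\,dz$, $\int z\psi''\,dz$, $\int (2\psi' + z\psi'')\,dz$, or $\int z(2\psi' + z\psi'')\,dz$ — each of which vanishes after integration by parts using the compact support of $\psi$. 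This is the whole point of the construction of $\varphi$: although $\varphi(s)\tanh(s)$ grows at infinity, the coefficient in front of its integral is always zero. For statement (2), the remaining $z$-factors $\int z\psi\,dz$ in both $\int(\tilde{\mathcal{L}}\bar f)z\tanh$ and $\int(\tilde{\mathcal{L}}\bar g)\tanh$ then vanish by the evenness of $\psi$.

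For statement (1), the surviving quantities are $\bigl(\int \psi\,dz\bigr)\bigl(\int_{-\ell}^\ell (\tilde L\varphi)\tanh\,ds\bigr)$ and $\bigl(\int z^2 \psi\,dz\bigr)\bigl(\int_{-\ell}^\ell (\tilde L\varphi)\tanh\,ds\bigr)$ respectively. I would compute the $s$-integral via Green's identity: since $\tilde L \tanh = 0$,
\[
\int_{-\ell}^\ell (\tilde L\varphi)\,\tanh\,ds = \bigl[\varphi'\tanh - \varphi\tanh'\bigr]_{-\ell}^\ell,
\]
and for $\ell \ge 2$ the explicit form $\varphi(s) = s\tanh(s)-1$ (and its odd extension at the lower endpoint) reduces this to the $\ell$-independent constant $2$, using $\tanh^2 + \operatorname{sech}^2 = 1$. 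Normalizing $\psi$ so that $\int \psi\,dz > 1/4$ and $\int z^2 \psi\,dz > 1/4$ — achievable by a standard rescaling of any nonnegative bump — then gives the strict lower bound in (1).

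The main subtlety is the justification of Fubini in the presence of the formally divergent $\int \varphi(s)\tanh(s)\,ds$; this integral never appears with a nonzero coefficient, so the $s$-integration can safely be truncated to $[-\ell,\ell]$ with no loss. This is also precisely the content of ``$\ell$ sufficiently large'' in the statement: it simply ensures $\ell \ge 2$ so that the compactly supported $\tilde L\varphi$ lies inside the integration region and the boundary evaluation above is valid.
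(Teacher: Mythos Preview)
Your proof is correct and follows essentially the same route as the paper: split $\tilde{\mathcal{L}}\bar f$ and $\tilde{\mathcal{L}}\bar g$ into a $\varphi$-term whose $z$-coefficient is a total derivative (hence vanishes) and a $\tilde L\varphi$-term evaluated via the Wronskian boundary expression, with part (2) disposed of by symmetry. You are in fact slightly more explicit than the paper---you compute the boundary term to be the constant $2$ and flag the evenness assumption on $\psi$ and the normalization needed for the $1/2$ bound, all of which the paper leaves implicit under ``symmetries of the functions involved.''
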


\begin{proof}
Computing directly gives
\begin{align} \notag
\tilde{\mathcal{L}}\bar{ f} = \psi'' \varphi + \psi \tilde{L} \varphi = I + II.
\end{align}
We then have that 
\begin{align} \notag
\int_{R^2} I \tanh(s) & = \int_{0}^{2\ell} \varphi(s) \tanh(s) \left( \int_{-2 \pi}^{2 \pi}  \psi'' (z) dz \right) ds = 0.
\end{align}
Additionally, we have
\begin{align} \notag
\int_{R^2} II \tanh(s) & = \int_{- 2\pi}^{2\pi} \psi(z) \int_{0}^{2 \ell} (\tilde{L} \varphi) \tanh(s) ds \\ \notag \\ \notag
&=  \int_{- 2\pi}^{2\pi} \psi(z) \int_{0}^{2 \ell} \left( \varphi' (2 \ell) \tanh(2 \ell) - \varphi(2 \ell) \cosh^{-2} (2 \ell) \right) ds.
\end{align}
Similarly, we have 
\begin{align} \notag
\int_{R^2}  \left(\tilde{\mathcal{L}} \bar{g} \right) z \tanh(s) & = \int_0^{2 \ell} \partial_z \bar{g}(s,  2 \pi) 2 \pi \tanh(s) - \bar{g}(s, 2 \pi) \tanh(s) ds  - \int_0^{2 \ell}  \partial_z \bar{g}(s,  -  2 \pi)(- 2 \pi) \tanh(s) - \bar{g}(s,  - 2 \pi) \tanh(s) ds \\ \notag
& \quad + \int_{- 2\pi}^{2 \pi} \psi(z)z^2 \left( \varphi' (2 \ell) \tanh(2 \ell) - \varphi(2 \ell) \cosh^{-2} (2 \ell) \right)  dz \\ \notag
& = \int_{- 2\pi}^{2 \pi} \psi(z)z^2 \left( \varphi' (2 \ell) \tanh(2 \ell) - \varphi(2 \ell) \cosh^{-2} (2 \ell) \right)  dz  \\ \notag
& > 0 .
\end{align}
The remaining claims are then  directs consequences of the symmetries that the functions involved. 
\end{proof}

\begin{proposition} \label{PreppedErrorTerm}
The following statements hold:

\begin{enumerate}
\item \label{PET1} The function $E(s, z_j + z)$ is supported on  $\Lambda(2 \ell_j,  2 \pi)$.
\item\label{PET2} There are unique multiples $\bar{a}_j$ and $\bar{b}_j$ so that the function 
\begin{align} \notag
\hat{E}_j (s, z): =  E_j (s, z_j + z)- \bar{a}_j \left( \tilde{\mathcal{L}}\bar{f} \right) (s, z)- \bar{b}_j  \left( \tilde{\mathcal{L}}\bar{g} \right)(s, z)
\end{align}
 satisfies the following orthgonality conditions:
\begin{align} \notag
\int_{\Lambda(2 \pi, 2 \ell)} \hat{E}_j (s, z) \tanh(s) ds =  \int_{\Lambda(2 \ell_j 2 \pi)} \hat{E} (s ,z ) z \tanh(s) ds = 0.
\end{align}
\item \label{PET3} The multiples $\bar{a}_j$ and $\bar{b}_j$ satisfy the estimates
\begin{align} \notag
|\bar{a}_j|, |\bar{b}_j| \leq C \ell_j \|E_j : C^{0, \alpha} (\Lambda(2 \pi, 2 \ell_j), 1)\| \leq C \ell_j \lambda_j^{\epsilon_0}.
\end{align}

\item \label{PET4} It holds that 
\begin{align} \notag
\| \hat{E}_j : C^{0, \alpha} (\Lambda(2 \pi, 2\ell), 1)\| & \leq \|E_j : C^{0, \alpha} (\Lambda(2 \pi, 2 \ell), 1)\| + \bar{a}_j \|\bar{f}: C^{0, \alpha} (\Lambda(2 \pi, 2\ell), 1) \|  \\ \notag
& \quad + \bar{b}_j \|g : C^{0, \alpha} (\Lambda(2 \pi, 2\ell), 1)\| \\ \notag
& \leq C\lambda_j^{\epsilon_0} \left( 1 + \ell_j^2 \right)
\end{align}
\end{enumerate}
\end{proposition}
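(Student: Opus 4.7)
Part (\ref{PET1}) is essentially a bookkeeping observation. By construction the cutoff $\psi_j$ is supported in $[z_j - 3\pi/2, z_j + 3\pi/2]$, so after shifting by $z_j$ the function $E_j(s, z_j + z)$ is supported in $|z| \leq 3\pi/2 < 2\pi$. The $s$-support lies in $|s| \leq \underline{\ell}(z_j + z)$; Proposition \ref{ZGoodInterval} combined with $\underline{\ell} = \lambda^{-\tau\epsilon}$ and the assumption that $\lambda$ is pinched by a factor of $2$ on $|z-z_j| \leq A\lambda^\epsilon(z_j)$ gives $\underline{\ell}(z_j+z) \leq 2\ell_j$ on the support of $\psi_j$, provided $A$ is sufficiently large relative to $2\pi$ (which is arranged by taking $C_0$ small).

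For part (\ref{PET2}), the orthogonality conditions give a linear $2 \times 2$ system in the unknowns $\bar{a}_j, \bar{b}_j$, whose coefficient matrix has entries $\int(\tilde{\mathcal{L}}\bar{f})\tanh(s)$, $\int(\tilde{\mathcal{L}}\bar{g})\tanh(s)$, $\int(\tilde{\mathcal{L}}\bar{f}) z\tanh(s)$, $\int(\tilde{\mathcal{L}}\bar{g}) z\tanh(s)$. Proposition \ref{ProjectingFunctions} says that for $\ell_j$ sufficiently large the off-diagonal entries vanish exactly and the diagonal entries have absolute value at least $1/2$, so the system is diagonal and invertible. This gives existence and uniqueness of $\bar{a}_j, \bar{b}_j$ via
\begin{equation}\notag
\bar{a}_j = \frac{\int E_j(s,z_j+z)\tanh(s)\,ds\,dz}{\int (\tilde{\mathcal{L}}\bar{f})\tanh(s)\,ds\,dz}, \qquad \bar{b}_j = \frac{\int E_j(s,z_j+z)\, z\tanh(s)\,ds\,dz}{\int (\tilde{\mathcal{L}}\bar{g})\, z\tanh(s)\,ds\,dz}.
\end{equation}

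For part (\ref{PET3}), I apply the triangle inequality to these formulas. The denominators are bounded below by $1/2$, and each numerator is bounded in absolute value by $\|E_j\|_{L^\infty} \int_{\Lambda(2\ell_j,2\pi)} |\tanh(s)|(1 + |z|)\,ds\,dz \leq C\ell_j \|E_j\|_{L^\infty(\Lambda(2\ell_j,2\pi))}$, using $|\tanh|\leq 1$ and $|z|\leq 2\pi$. The weighted hypothesis on $E$ recorded in (\ref{ELambdaWeightedEstimate}) then gives $\|E_j\|_{L^\infty} \leq C\lambda_j^{\epsilon_0}$ on the support of $\psi_j$, so $|\bar{a}_j|, |\bar{b}_j| \leq C\ell_j \lambda_j^{\epsilon_0}$ as claimed.

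Finally, part (\ref{PET4}) follows by writing $\hat{E}_j = E_j - \bar{a}_j\tilde{\mathcal{L}}\bar{f} - \bar{b}_j\tilde{\mathcal{L}}\bar{g}$ and applying the triangle inequality in the weighted H\"older norm. The only slightly delicate point is to bound $\|\tilde{\mathcal{L}}\bar{f}\|_{C^{0,\alpha}(\Lambda(2\ell_j,2\pi),1)}$ and similarly for $\bar{g}$; here Proposition \ref{DualKernelFunction} is crucial. For $|s|\geq 2$ we have $\varphi(s) = s\tanh(s) - 1 \in \ker \tilde{L}$, so $\tilde{\mathcal{L}}\bar{f} = \psi''(z)\varphi(s)$ on that region and $\tilde{\mathcal{L}}\bar{g} = (z\psi'' + 2\psi')(z)\varphi(s)$, while on $|s|\leq 2$ both quantities are smooth and bounded by a universal constant. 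Since $|\varphi(s)| + |\varphi'(s)| \leq C\ell_j$ for $|s| \leq 2\ell_j$, we obtain $\|\tilde{\mathcal{L}}\bar{f}\|_{C^{0,\alpha}}, \|\tilde{\mathcal{L}}\bar{g}\|_{C^{0,\alpha}} \leq C\ell_j$. Combining with part (\ref{PET3}) yields $\|\hat{E}_j\|_{C^{0,\alpha}} \leq C\lambda_j^{\epsilon_0} + C\ell_j \cdot \ell_j \lambda_j^{\epsilon_0} \leq C\lambda_j^{\epsilon_0}(1 + \ell_j^2)$. The main (minor) obstacle is precisely this last step: without exploiting the kernel property of Proposition \ref{DualKernelFunction} one would pick up spurious powers of $\ell_j$ from growth of $\varphi$, so care is needed to keep the final exponent as $\ell_j^2$ rather than something worse.
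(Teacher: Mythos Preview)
Your proposal is correct and follows the same approach as the paper, which simply states that (\ref{PET1}) is clear and that (\ref{PET2})--(\ref{PET4}) are direct consequences of Proposition \ref{ProjectingFunctions}. You have filled in the details accurately: the diagonal structure of the $2\times 2$ system from Proposition \ref{ProjectingFunctions} gives (\ref{PET2}) and the explicit formulas, the crude integral bound on the numerators gives (\ref{PET3}), and your observation that $\tilde{L}\varphi$ vanishes for $|s|\geq 2$ (so that $\|\tilde{\mathcal{L}}\bar{f}\|_{C^{0,\alpha}}, \|\tilde{\mathcal{L}}\bar{g}\|_{C^{0,\alpha}} \leq C\ell_j$ rather than something worse) is exactly the content needed for (\ref{PET4}).
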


\begin{proof}
(\ref{PET1}) is clear. (\ref{PET2})  and (\ref{PET3}) and (\ref{PET4}) are  direct consequences of Proposition \ref{ProjectingFunctions}.
\end{proof}

\begin{proof}[ Proof of Proposition \ref{PartitionedSolutionPropogated}]
We wish to apply  Proposition \ref{RBCSolutionsWeightedControl}.  In order to do this directly, our error term must satisfy the strong orthogonality condition along lines stated in Proposition \ref{StripInvertibilityWithRBC}. Set 
\begin{align} \notag
a(z) : = \left( \int_{0}^{2 \ell_j} \hat{E}(s, z) \tanh(s)ds \right)/ \left( \int_0^{2 \ell_j} \tanh^2(s) ds\right), \quad b(z) : =  z a (z)
\end{align}
Observe that from Proposition \ref{PreppedErrorTerm} we have:
\begin{align} \notag
\int_{- 2 \pi}^{2\pi} a(z)dz  = \int_{\Lambda(2\pi, 2 \ell_j)}\hat{E}(s, z) \tanh(s) = 0, \quad  \int_{- 2 \pi}^{2\pi} b(z) dz = \int_{\Lambda(2\pi, 2 \ell_j)}\hat{E}(s, z)  z\tanh(s) = 0.
\end{align}
Set
\begin{align} \notag
A(z)  : = \int_{-\infty}^z \int_{-\infty}^{z'} a(z'') dz''  = z\int_{-\infty}^z a(z')dz' - \int_{-\infty}^z b(z') dz'
\end{align}
where the second equality follows from integration by parts. We then put
\begin{align} \notag
h (s, z) : =  A(z) \tanh(s)
\end{align}
It then follows directly that $h(s, z)$ is supported on the set $\{|z| \leq 2 \pi \}$ and that 
\begin{align} \notag
\tilde{\mathcal{L}} h (s, z) = a(z) \tanh(s).
\end{align}
Moreover, by construction, we have that 
\begin{align} \notag
\int_0^{2 \ell_j}\left(\hat{E}(s, z) - a (z) \tanh(s) \right) \tanh(s) ds = \int_{0}^{2 \ell_j} \hat{E} (s, z)\tanh(s) ds - a(z)\int_0^{2 \ell} \tanh^2(s) ds = 0.
\end{align}
The function
\begin{align} \notag
 F(s ,z) : = \hat{E} (s, z) - a (z) \tanh(s)
\end{align}
 then satisfies the bound
\begin{align} \notag
\| F : C^{0, \alpha} (\Lambda(2 \pi, 2 \ell_j)) \|  \leq  C\lambda_j^{\epsilon_0} (1 + \ell_j^2) \leq \lambda_j^{(1 - 4 \tau) \epsilon}
\end{align}
We then apply Proposition \ref{RBCSolutionsWeightedControl} to obtain a function $v$ satisfying:
\begin{align} \notag
\tilde{\mathcal{L}} v (s, z) = F  (s, z) =   E(s, z + z_j ) - \bar{a} \left( \tilde{\mathcal{L}}\bar{f} \right)(s,z) - \bar{b} \left( \tilde{\mathcal{L}}\bar{g} \right) (s, z) - a(z) \tanh(s),
\end{align}
and the claimed bounds. We conclude by setting
\begin{align} \notag
v_j (s, z) :   = v(s, z - z_j) +  \bar{a}\bar{f} (s, z - z_j) + \bar{b} \bar{g}(s, z - z_j) + A (z - z_j) \tanh(s).
\end{align}
\end{proof}

\begin{definition} \label{DualDomains}
We let $\Lambda^*_j$ be the domain given by:
\begin{align}  \notag
\Lambda_j^* : = \{(s, z): |s| \leq 2 \ell, |z - z_j| \leq \rho_j\}
\end{align}
where  where  $\rho_j : = A \lambda^{-\epsilon}_j/6$.  Additionally, we let $\psi^*_j$(z) denote the cutoff function given by:
\begin{align}
\psi^*_j(z) : = \psi_0[\rho_j, \rho_j/2] (|z|).
\end{align}
\end{definition}

\begin{proposition} \label{DualDomainProps}
The following statements hold:
\begin{enumerate}
\item The domain $\Lambda \cap \{ |z - z_j| \leq \rho_j\}$ is contained in $\Lambda^*_j$ and the boundary curve $\partial ( \Lambda \cap \{ |z - z_j| \leq \rho_j\} )$ is strictly separated from the boundary curve $\{s = 2 \ell \}$ on $\Lambda^*_j$.

\item The functions $\psi^*_j$ are smooth non-negative functions with support contained in $\Lambda^*_j$  and it holds that 
\begin{align} \notag
|\nabla^{(k)} \psi_j^* | \leq C \rho_j^k.
\end{align}

\item The support of the  gradient of $\psi_j^*$ is contained in $\Lambda^*_j \setminus \Lambda_j$.
\end{enumerate}
\end{proposition}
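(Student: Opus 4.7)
The plan is to observe that all three assertions reduce to routine consequences of the definitions together with the convex pinching of the scale function recorded in Proposition \ref{ZGoodInterval}. The single analytic input is that on the $z$--interval of length $2\rho_j$ centered at $z_j$, the ratio $\lambda(z)/\lambda_j$ remains uniformly bounded: since $\rho_j = A\lambda_j^{-\epsilon}/6$ sits well inside the window on which Proposition \ref{ZGoodInterval} gives $\lambda(z)/\lambda_j \in [1/2,3/2]$, we obtain
\begin{align*}
\underline{\ell}(z) \;=\; \lambda(z)^{-\tau\epsilon} \;\leq\; (\lambda_j/2)^{-\tau\epsilon} \;=\; 2^{\tau\epsilon}\,\ell_j,
\end{align*}
and for $\tau$ chosen small enough (depending only on $\epsilon$), one has $2^{\tau\epsilon} < 2$ with a uniform gap.

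For claim (1), if $(s,z) \in \Lambda$ with $|z-z_j| \leq \rho_j$, then $|s| \leq \underline{\ell}(z) < 2\ell_j$ by the above, which simultaneously gives the containment $(s,z) \in \Lambda_j^*$ and the strict separation from the vertical boundary $\{|s| = 2\ell_j\}$ of $\Lambda_j^*$. For claim (2), the definition $\psi_j^*(z) = \psi_0[\rho_j,\rho_j/2](|z-z_j|)$ presents $\psi_j^*$ as the composition of the fixed non-negative profile $\psi_0$ with a linear rescaling of slope of order $\rho_j^{-1}$, so non-negativity and smoothness are immediate from the properties of $\psi_0$, and the chain rule produces the asserted derivative bound (understood as $|\nabla^{(k)}\psi_j^*| \leq C\rho_j^{-k}$); the support is contained in $\{|z - z_j| \leq \rho_j\}$ and hence, when intersected with the strip $\{|s| \leq 2\ell_j\}$ in which $\psi_j^*$ is viewed, lies inside $\Lambda_j^*$.

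For claim (3), the defining properties of $\psi_0$ force $\psi_j^*$ to be locally constant outside the annular region $\{\rho_j/2 \leq |z-z_j| \leq \rho_j\}$, so the support of $\nabla\psi_j^*$ is contained in that annulus. Since $\rho_j/2 = A\lambda_j^{-\epsilon}/12$ and $\lambda_j$ may be made as small as desired by shrinking $C_0$ in (\ref{ScaleFunctionAssumptions}), we have $\rho_j/2 > 2\pi$, so this annulus misses $\{|z-z_j| \leq 2\pi\} \supset \Lambda_j$ entirely. Combined with the support containment from claim (2), this yields $\mathrm{supp}(\nabla\psi_j^*) \subset \Lambda_j^* \setminus \Lambda_j$.

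There is no real obstacle: the only bookkeeping is coordinating the choices of constants ($\tau$ small to achieve the strict inequality $2^{\tau\epsilon} < 2$ in claim (1), $C_0$ small to ensure $\rho_j/2 > 2\pi$ in claim (3)), both of which are compatible with the free parameters available in Theorem \ref{MainTheorem}. The entire proposition should occupy only a short paragraph once Proposition \ref{ZGoodInterval} and the construction of $\psi[a,b]$ from the preliminaries are cited.
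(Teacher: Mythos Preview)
Your proposal is correct and matches the paper's own approach, which is simply the one-line assertion that everything follows from Definition \ref{DualDomains}, the definition of $\Lambda$ in (\ref{LambdaDef}), and the scale function assumptions (\ref{ScaleFunctionAssumptions}). You have merely unpacked those references, invoking Proposition \ref{ZGoodInterval} (the consequence of (\ref{ScaleFunctionAssumptions}) that controls $\lambda(z)/\lambda_j$ on the $\rho_j$-window) and the chain rule for the rescaled cutoff; your silent corrections of the evident typos (the exponent in the derivative bound should be $\rho_j^{-k}$, and $\Lambda_j$, $\psi_j^*$ should be centered at $z_j$) are appropriate and do not alter the argument.
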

\begin{proof}
All claims in the proposition are simple consequences of Definition \ref{DualDomains},  the definition of $\lambda$ in (\ref{LambdaDef}) and the properties of the scale function $\lambda$ in (\ref{ScaleFunctionAssumptions}).
\end{proof}

\begin{definition}
The functions $v_j^*$ and $E_j^*$ are given as follows:
\begin{align}
v^*_j (s, z): = \psi_j^*(z)v_j (s, z), \quad E_j^* : = E_j - \tilde{\mathcal{L}} (\psi_j^*v_j)
\end{align}
\end{definition}
\begin{proposition} \label{DualObjectProperties}

The following statements hold:
\begin{enumerate} \notag
\item \label{LocalizedSupports}The supports of $v^*_j$ and $E^*_j$  are contained in $\Lambda_j^*$,
\item \label{LocalizedSupportIntersect} Assume that $z_k$ and $z_j$ are such that  $|z_k - z_j| \geq  A \lambda^{-\epsilon}_j$, where $A$ is as in Proposition \ref{ZGoodInterval} . Then the supports of $v^*_j$ and $v_k^*$  do not intersect. Likewise, the supports of  $E^*_j$ and $E^*_k$ do not intersect under the same conditions. 

\item \label{DualSolutionEstimate} The functions $v^*_j$ satisfy the estimates
\begin{align}
\left\| v^*_j : C^{2, \alpha} \left(\Lambda_j^*, \frac{1}{1 + |z - z_j|} \right) \right\| \leq   C\gamma \lambda_j^{\epsilon_1}.
\end{align}

\item \label{DualErrorEstimate} The functions $E^*_j$ satisfies the estimate
\begin{align}
\| E^*_j : C^{0, \alpha} \left(\Lambda_j^*, \frac{\rho_j }{1 + |z - z_j|} \right) \| \leq   C \gamma  \lambda_j^{\epsilon_1}
\end{align}

\end{enumerate}
\end{proposition}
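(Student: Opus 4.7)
The plan is to unwind the definitions of $v_j^*$ and $E_j^*$, compute the commutator of $\tilde{\mathcal{L}}$ with the cutoff $\psi_j^*$, and then invoke the weighted bound for $v_j$ from Proposition \ref{PartitionedSolutionPropogated}. The only genuinely quantitative step is to observe that at the scale $\rho_j$ the derivatives of $\psi_j^*$ are of order $\rho_j^{-k}$, which is bounded since $\rho_j \geq 1$, so the cutoff losses are harmless.

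First I would handle the support claim \eqref{LocalizedSupports}. Since $\psi_j^*$ vanishes outside $\{|z-z_j| \leq \rho_j\}$, the support of $v_j^* = \psi_j^*v_j$ automatically lies in $\Lambda_j^*$. For $E_j^*$, expanding
\begin{align*}
\tilde{\mathcal{L}}(\psi_j^* v_j) = \psi_j^* \tilde{\mathcal{L}} v_j + 2(\psi_j^*)' \partial_z v_j + (\psi_j^*)'' v_j
\end{align*}
and using $\tilde{\mathcal{L}} v_j = E_j$ yields
\begin{align*}
E_j^* = (1-\psi_j^*)E_j - 2(\psi_j^*)'\partial_z v_j - (\psi_j^*)'' v_j.
\end{align*}
Now $E_j = \psi_j(z)E(s,z)$ is supported in $\{|z-z_j|\leq 3\pi/2\}$, while $\psi_j^* \equiv 1$ on $\{|z-z_j|\leq \rho_j/2\}$ and $\rho_j/2 \gg 3\pi/2$ for $\lambda_j$ small, so $(1-\psi_j^*)E_j \equiv 0$. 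The two commutator terms are supported in the annulus $\{\rho_j/2 \leq |z-z_j| \leq \rho_j\} \subset \Lambda_j^*$, proving \eqref{LocalizedSupports}. Claim \eqref{LocalizedSupportIntersect} is then immediate from comparing $z$-diameters: $\Lambda_j^*$ has $z$-extent $2\rho_j$, and the stated hypothesis $|z_k-z_j| \geq A\lambda_j^{-\epsilon} = 6\rho_j$ combined with the convex pinching of Proposition \ref{ZGoodInterval} (which controls the ratio $\rho_k/\rho_j$ on the relevant scales) forces the supports to be disjoint.

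For \eqref{DualSolutionEstimate}, Proposition \ref{DualDomainProps} gives $\|\psi_j^*\|_{C^{2,\alpha}(B_1(p))} \leq C$ uniformly in $p$ (since $|(\psi_j^*)^{(k)}| \leq C\rho_j^{-k}$ and $\rho_j \geq 1$). The product rule applied to $v_j^* = \psi_j^* v_j$ then gives $\|v_j^*\|_{2,\alpha}(p) \leq C\|v_j\|_{2,\alpha}(p)$, and multiplying by the weight $1/(1+|z-z_j|)$ and invoking the bound from Proposition \ref{PartitionedSolutionPropogated} yields \eqref{DualSolutionEstimate} with a constant absorbing $\gamma$. For \eqref{DualErrorEstimate}, I would use the explicit expression $E_j^* = -2(\psi_j^*)'\partial_z v_j - (\psi_j^*)'' v_j$ derived above, whose support lies in the annulus where $|z-z_j| \sim \rho_j$. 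On this annulus Proposition \ref{PartitionedSolutionPropogated} gives $\|v_j\|_{2,\alpha}(p) \leq C\gamma\lambda_j^{\epsilon_1}\rho_j$, so combining with $|(\psi_j^*)'| \leq C\rho_j^{-1}$ and $|(\psi_j^*)''| \leq C\rho_j^{-2}$ produces $\|E_j^*\|_{0,\alpha}(p) \leq C\gamma \lambda_j^{\epsilon_1}$. Since the weight $\rho_j/(1+|z-z_j|)$ is of order unity on the support, multiplying through gives \eqref{DualErrorEstimate}.

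There is no serious obstacle in this proposition; the content is essentially bookkeeping around a cutoff argument. The one point requiring care is the quantitative matching between the growth factor $(1+|z-z_j|) \sim \rho_j$ picked up from the weighted bound on $v_j$ and the decay factors $\rho_j^{-1}$, $\rho_j^{-2}$ supplied by the derivatives of $\psi_j^*$ — these precisely cancel to recover the exponent $\epsilon_1$, which is why the definition $\epsilon_1 = (1-10\tau)\epsilon_0$ leaves enough room to absorb constants. All of the substantive analytic work has already been done in Propositions \ref{RBCSolutionsWeightedControl} and \ref{PartitionedSolutionPropogated}.
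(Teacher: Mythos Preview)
Your treatment of parts \eqref{LocalizedSupports}, \eqref{DualSolutionEstimate}, and \eqref{DualErrorEstimate} is correct and is in fact more explicit than the paper's, which simply says these ``follow directly'' from the weighted estimate in Proposition \ref{PartitionedSolutionPropogated} and the definition of $\psi_j^*$. (One arithmetic slip: on the annulus Proposition \ref{PartitionedSolutionPropogated} actually gives $\|v_j\|_{2,\alpha}(p) \lesssim \lambda_j^{\epsilon_1}/\rho_j$, not $\lambda_j^{\epsilon_1}\rho_j$; your stated bound is still a valid upper bound and your final conclusion is unaffected.)

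The genuine gap is in part \eqref{LocalizedSupportIntersect}. You assert that Proposition \ref{ZGoodInterval} ``controls the ratio $\rho_k/\rho_j$ on the relevant scales,'' but that proposition only says $\lambda(z)/\lambda(z_0)\in[1/2,3/2]$ when $|z-z_0|$ is \emph{small} (at most $A\lambda^{-\epsilon}(z_0)$); it gives no information when $|z_k-z_j|\geq A\lambda_j^{-\epsilon}$. If $\lambda_k\ll\lambda_j$ then $\rho_k\gg\rho_j$, and the naive triangle-inequality argument $|z_k-z_j|\leq\rho_j+\rho_k$ fails. The paper closes this with a two-case argument: if $\lambda_k\leq\lambda_j/6$, then by Proposition \ref{ZGoodInterval} one has $\lambda\leq\tfrac32\lambda_k\leq\tfrac14\lambda_j$ on the $k$-interval and $\lambda\geq\tfrac12\lambda_j$ on the $j$-interval, so the intervals cannot meet; if $\lambda_k\geq\lambda_j/6$, then $\rho_k\leq 6^{\epsilon}\rho_j\leq 2\rho_j$ and your triangle-inequality argument goes through since $\rho_j+\rho_k\leq 3\rho_j<6\rho_j\leq|z_k-z_j|$. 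You need this dichotomy; the invocation of Proposition \ref{ZGoodInterval} alone does not supply it.
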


\begin{proof}
Statement (\ref{LocalizedSupports}) is a direct consequence of the definition of $\psi^*_j$ in Definition \ref{DualDomains}. We prove statement (\ref{LocalizedSupportIntersect}) in two cases. Let $k \in \mathbb{N}$ be such that 
\begin{align} \notag
\lambda_k \leq \frac{1}{6} \lambda_j.
\end{align} 
By construction, it holds that for all $z $ belonging to the interval $(z_k - \rho_k, z_k + \rho_k)$ we have that 
\begin{align} \notag
\lambda(z) \leq 3/2 \lambda_k \leq 1/4 \lambda_j.
\end{align}
Again, by construction we have that for $z $ belonging to  $(z_j - \rho_j, z_j + \rho_j)$, it holds that 
\begin{align} \notag
\lambda(z) \geq 1/2 \lambda_j.
\end{align}
Thus, if the two intervals intersect, we obtain
\begin{align} \notag
\lambda (z) \geq  2 \lambda(z),
\end{align}
which is a contradiction.  Now, assume that $k \in \mathbb{N}$ is such that 
\begin{align} \notag
\lambda_k \geq \frac{1}{6} \lambda_j.
\end{align}
and so that $z_k$ does not belong to the interval $(z_j - \rho_j, z_j + \rho_j)$.  We then get that 
\begin{align} \notag
\rho_k \leq 6^\epsilon \rho_j \leq 2 \rho_j.
\end{align}
for $\epsilon > 0$ sufficiently small.  Suppose now that $z \in R$ satisfies
\begin{align} \notag
|z_j - z| \leq \rho_j, \quad |z_k  - z| \leq \rho_k.
\end{align} 
Then from the triangle inequality we get that 
\begin{align} \notag
|z_k - z_j| \leq |z_k - z| + |z_j - z| \leq \rho_k + \rho_j \leq  3 \rho_j \leq (1/2) A \lambda_j^{- \epsilon}.
\end{align}
However,  by assumption we have that 
\begin{align} \notag
|z_k - z_j| \geq A \lambda_j^{- \epsilon}.
\end{align} 
This yields a contradiction and gives the proof of  Statement (\ref{LocalizedSupportIntersect}). Statements  (\ref{DualSolutionEstimate}) and  (\ref{DualErrorEstimate}) follow directly for the weighted estimate for $v_j$ in Proposition \ref{PartitionedSolutionPropogated} (\ref{PartitionedSolutionEstimate}) and the definition of $\psi^*_j$.
\end{proof}
\begin{proposition} \label{FirstIterationFunctions}
Set 
\begin{align} \notag
v^* : = \sum_j v^*_j, \quad E^* : = \sum_j E^*_j.
\end{align}
Then the following statements hold:
\begin{enumerate}
\item \label{LocallyFiniteSums} The infinite sums defining $v^*$ and $E^*$ are locally finite and thus converge on compact subsets of $\Lambda$. 
\item \label{VStarEstimate} The function $v^*$ satisfies the estimate
\begin{align} \notag
\| v^*: C^{2, \alpha} (\Lambda, \lambda^{\epsilon_2}) \| \leq C \gamma
\end{align}
where $\epsilon_2 = (1 -2 0 \tau) \epsilon$
\item \label{EStarEstimate} Given any constant $\bar{\delta} > 0$, there is $C_0$ in (\ref{ScaleFunctionAssumptions}) so that:
\begin{align} \notag
\| E^*: C^{0, \alpha} (\Lambda, \lambda^{\epsilon_0})\| \leq   \bar{\delta} \gamma.
\end{align}
\end{enumerate}
\end{proposition}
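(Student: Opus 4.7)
The plan is to verify the three parts in turn, with parts (2) and (3) being applications of the weighted bounds in Proposition \ref{DualObjectProperties} combined with the scale pinching in Proposition \ref{ZGoodInterval}. Throughout, for a point $p = (s,z) \in \Lambda$ I will write $J(p) := \{ j : p \in \mathrm{supp}\, v_j^*\}$, so that $j \in J(p)$ forces $|z - z_j| \leq \rho_j$.

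For part (\ref{LocallyFiniteSums}), I would first note that on any compact $K \subset \Lambda$ the $z$-coordinates lie in a bounded interval on which $\lambda$ is uniformly bounded below, so $\rho_j$ is uniformly bounded for any $z_j$ close enough to $K$ to matter. Since the $z_j = j\pi$ are uniformly spaced, only finitely many of the $\Lambda_j^*$ meet $K$, and local finiteness follows immediately.

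For part (\ref{VStarEstimate}), I fix $p = (s,z)$ and estimate
\begin{align} \notag
\|v^*\|_{2,\alpha}(p) \leq \sum_{j \in J(p)} \|v^*_j\|_{2,\alpha}(p) \leq \sum_{j \in J(p)} \frac{C\gamma\, \lambda_j^{\epsilon_1}}{1 + |z - z_j|}
\end{align}
using Proposition \ref{DualObjectProperties}(\ref{DualSolutionEstimate}). For $j \in J(p)$, the scale pinching argument in the proof of Proposition \ref{DualObjectProperties}(\ref{LocalizedSupportIntersect}) forces $\lambda_j$ to be comparable to $\lambda(z)$, so $\lambda_j^{\epsilon_1} \leq C \lambda(z)^{\epsilon_1}$ and $\rho_j \leq C \lambda(z)^{-\epsilon}$. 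Since the $z_j$ are uniformly spaced, the remaining sum $\sum_{j \in J(p)} (1 + |z - z_j|)^{-1}$ is bounded by $C|\log \lambda(z)|$. Using $\epsilon_1 > \epsilon_2$ so that $\lambda^{\epsilon_1 - \epsilon_2}|\log \lambda|$ is bounded for $\lambda$ small, I conclude $\|v^*\|_{2,\alpha}(p) \leq C\gamma \lambda(z)^{\epsilon_2}$ as required.

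For part (\ref{EStarEstimate}) I exploit the fact that $\psi_j^*$ depends only on $z$, which gives the commutator identity
\begin{align} \notag
E^*_j = E_j - \tilde{\mathcal{L}}(\psi^*_j v_j) = (1 - \psi^*_j) E_j - 2 (\partial_z \psi^*_j)(\partial_z v_j) - (\partial_z^2 \psi^*_j) v_j,
\end{align}
where $\tilde{\mathcal{L}} v_j = E_j$ was used. The key observation is that $E_j = \psi_j E$ has $z$-support in $[z_j - 3\pi/2, z_j + 3\pi/2]$ while $\psi^*_j \equiv 1$ on $|z - z_j| \leq \rho_j/2$; since $\rho_j \sim \lambda_j^{-\epsilon}$ can be made arbitrarily large by shrinking $C_0$, the term $(1 - \psi^*_j) E_j$ vanishes identically. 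Consequently $E^*_j$ is supported in the annular region $\rho_j/2 \leq |z - z_j| \leq \rho_j$, where $|\partial_z^k \psi^*_j| \leq C\rho_j^{-k}$ and the weight $(1 + |z-z_j|)^{-1}$ from Proposition \ref{DualObjectProperties}(\ref{DualSolutionEstimate}) is itself $\leq C\rho_j^{-1}$. This produces two powers of $\rho_j^{-1} \sim \lambda_j^{\epsilon}$ beyond the baseline weighted bound, giving $\|E^*_j\|_{0,\alpha}(p) \leq C\gamma \lambda_j^{\epsilon_1 + 2\epsilon}$ pointwise. Summing over the at most $O(\lambda(z)^{-\epsilon})$ terms in $J(p)$, and again using $\lambda_j \sim \lambda(z)$, yields
\begin{align} \notag
\|E^*\|_{0,\alpha}(p) \leq C\gamma\, \lambda(z)^{\epsilon_1 + \epsilon} = C\gamma\, \lambda(z)^{\epsilon_0} \cdot \lambda(z)^{\epsilon_1 + \epsilon - \epsilon_0}.
\end{align}
Since $\epsilon_1 + \epsilon - \epsilon_0 > 0$, the last factor is bounded by $C_0^{\epsilon_1 + \epsilon - \epsilon_0}$, which I can make smaller than any prescribed $\bar\delta/C$ by choosing $C_0$ small. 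The main obstacle in the whole argument is exactly this bookkeeping of exponents: the annular support of $E^*_j$ provides a crucial pair of extra powers of $\lambda^{\epsilon}$, which must more than compensate both the logarithmic count in $J(p)$ and the gap $\epsilon_0 - \epsilon_1$; that this compensation works is precisely what forces the chain of inequalities $\epsilon > \epsilon_0 > \epsilon_1 > \epsilon_2$ built into the construction.
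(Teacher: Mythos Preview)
Your argument is correct and follows essentially the same route as the paper: localize to a base strip, use the weighted bounds on $v_j^*$ and $E_j^*$ together with the scale pinching from Proposition~\ref{ZGoodInterval} to reduce all $\lambda_k$ to $\lambda(z)$, and then sum. For part~(\ref{EStarEstimate}) you unpack the commutator $E_j^* = (1-\psi_j^*)E_j - 2(\partial_z\psi_j^*)(\partial_z v_j) - (\partial_z^2\psi_j^*)v_j$ explicitly and extract the annular support plus the two factors of $\rho_j^{-1}\sim\lambda_j^{\epsilon}$, whereas the paper black-boxes this into Proposition~\ref{DualObjectProperties}(\ref{DualErrorEstimate}) and then sums with the $1/(1+|z-z_j|)$ weight; your crude term count $|J(p)| = O(\lambda(z)^{-\epsilon})$ in place of the logarithmic integral is a harmless variation that leads to the same exponent bookkeeping.
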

\begin{proof}
Fix a $j \in \mathbb{N}$. From Proposition \ref{DualObjectProperties} (\ref{LocalizedSupportIntersect}), we have on  $\Lambda_j$ that:
\begin{align} \notag
\left\|\sum_{k} v^*_k \right\|_{2,\alpha}& = \sum_{ |z_k - z_j| \leq  6 \rho_j}\left\| v^*_k \right\|_{2, \alpha} \\ \notag
 & \leq  C \gamma \sum_{ |z_k - z_j| \leq  6\rho_j} \frac{\lambda_k^{\epsilon_1}}{1 + |z_k - z_j|} \\ \notag
 & \leq C \gamma \lambda^{\epsilon_1}_j  \int_{- 6 \rho_j}^{6 \rho_j} \frac{1}{1 + |w| }dw \\ \notag
 & \leq  C  \gamma\lambda_j^{\epsilon_1} \log (\rho_j)\\ \notag
\end{align} 
Setting $\epsilon_2 =  ( 1 -  20 \tau) \epsilon_0$   gives claim (\ref{VStarEstimate}). Claim (\ref{EStarEstimate}) follows similarly:  On $\Lambda_j^*$ we  again have
\begin{align} \notag
\sum_{k} E^*_k & = \sum_{ |z_k - z_j| \leq \rho_j} E^*_k \\ \notag
&  \leq C\gamma   \sum_{ |z_k - z_j| \leq \rho_j} \frac{\rho_k\lambda_k^{\epsilon_1}}{1 + |z_k - z_j|} \\ \notag
& \leq C\gamma \rho_j \lambda_j^{\epsilon_1} \int_{-6 \rho_j}^{6 \rho_j}  \frac{dw}{1 + w } \\ \notag 
& \leq C \gamma \rho_j \lambda_j^{ \epsilon_2} \\ \notag
\end{align}
Assuming that $1 - 20 \tau > 0$, we can take $\lambda_j$ sufficiently small so that 
\begin{align} \notag
C \lambda_j^{(1 -20 \tau) \epsilon}\leq \bar{\delta},
\end{align}
which gives the claim.
\end{proof}
Proposition \ref{MainInvertibilityStatement} now follows immediately.
\begin{proof}[Proof of Proposition \ref{MainInvertibilityStatement}]
Let $E$ be a function as in the statement of the proposition and set
\begin{align} \notag
\gamma : = \| E : C^{0, \alpha} (\Lambda, \lambda^{\epsilon_0}) \|.
\end{align}
We then apply Proposition \ref{FirstIterationFunctions} to obtain functions $v^*$, $E^*$ satisfying the following estimates:
\begin{align} \notag
\| v^*: C^{2, \alpha} (\Lambda, \lambda^{\epsilon_2}) \| \leq C \gamma, \quad  \| E^*: C^{0, \alpha} (\Lambda, \lambda^{\epsilon_0})\| \leq   \bar{\delta} \gamma.
\end{align}
We then repeat the process to obtain a sequence of functions $(v_i, E_i)$ satisfying
\begin{align} \notag
\tilde{\mathcal{L}} v_i = E_i
\end{align}
and the  following estimates:
\begin{align} \notag
 \| E_i : C^{2, \alpha} (\Lambda, \lambda^{\epsilon_0}) \| \leq \bar{\delta}^i \gamma, \quad  \| v_i: C^{2, \alpha} (\Lambda, \lambda^{\epsilon_2}) \|  \leq (C\bar{\delta})^i \gamma. 
\end{align}
It is then straightforward that for $\bar{\delta}$ sufficiently small, the partial sums $\sum_{i = 0}^k v_i$  goes to infinity to a  function $v$ satisfying all the claims in the Proposition.  This completes the proof
\end{proof}

\section{The non-linear problem}

\begin{proof}[Proof of Proposition \ref{FStarGraphMC}]
From Proposition (\ref{FStarGraphMC}) it follows that  for $\lambda$ sufficiently small we have:
\begin{align} \notag
\mathfrak{a}[F^*(s, z)] \geq 1/2,  \quad |\nabla F(s, z)| > C \lambda(z) \cosh (s), \quad \ell_{j, \alpha} [F(s, z)] \leq C_0
\end{align}
and that the vector field $\mathcal{E} (s, z): = \lambda(z) u(s, z)\nu^*(s, z)$ satisfies
\begin{align} \notag
\|\mathcal{E} \|_{j, \alpha} (s, z) \leq C \lambda (z) \| u\|_{j, \alpha} (s, z) \leq C \epsilon |\nabla F(s, z)|.
\end{align}
The claim then follows directly from Proposition \ref{HQEstimates} and taking $\epsilon$ sufficiently small. 
\end{proof}

\begin{proposition} \label{FixedPointSetRemainder}
Let  $u$  be a function belonging to the set  $\Xi$ defined in (\ref{FixedPointSet}). Then, given $\delta > 0$,  and $\tau > 0$ satisfying $1 - 20 \tau > 0$,  there is  $C_0$ sufficiently small in (\ref{ScaleFunctionAssumptions}) so that: The term $R^*[u]$ defined in Proposition (\ref{FStarGraphMC}) satisfies the estimate
\begin{align} \notag
\| R^*[u] \|_{0, \alpha} (s, z) & \leq C \zeta^2 \lambda^{2 \epsilon_2} \cosh^{-1} (s) + C \zeta \lambda^\epsilon \lambda^{\epsilon_1}\\ \notag
 & \leq \delta \lambda^{\epsilon_0} (z).
\end{align}
\end{proposition}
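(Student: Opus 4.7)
The plan is to combine the Taylor remainder bound from Proposition~\ref{FStarGraphMC} with the pointwise weighted control on $u$ supplied by $u \in \Xi$ and the scale function hypotheses (\ref{ScaleFunctionAssumptions}). The first substantive step is to read the bound of Proposition~\ref{FStarGraphMC} as a pointwise statement with one small sharpening: namely, that the quadratic piece carries an additional weight $\cosh^{-1}(s)$. Concretely, I would aim at
\begin{align*}
\|R^*[u]\|_{0,\alpha}(s,z) \leq C\bigl(|\dot\lambda(z)|\,\|u\|_{2,\alpha}(s,z) + \cosh^{-1}(s)\,\|u\|_{2,\alpha}^2(s,z)\bigr).
\end{align*}
The linear piece in $|\dot\lambda|\,\|u\|_{2,\alpha}$ accounts for the error operator $\mathcal{E}$ in the decomposition $\lambda^2\cosh^2(s)\mathcal{L}^* = \tilde{\mathcal{L}} + \mathcal{E}$ of Proposition~\ref{FStarGeometry}(6), each of whose coefficients is $O(\lambda^\epsilon)$ under (\ref{ScaleFunctionAssumptions}). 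The quadratic piece is the output of Proposition~\ref{HQEstimates} applied to the mean curvature (a degree $-1$ homogeneous quantity) with variation field induced by $\lambda u \nu^*$: the intrinsic scale $|\nabla F^*| \sim \lambda \cosh(s)$ supplies a factor $|\nabla F^*|^{-1}$ in the remainder weight, and after multiplying through by the normalization $\lambda\cosh^2(s)$ defining $\tilde H^*$ a net $\cosh^{-1}(s)$ weight survives.

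The second step is direct substitution. The defining inequality of $\Xi$ in (\ref{FixedPointSet}) gives $\|u\|_{2,\alpha}(s,z) \leq \zeta\,\lambda^{\epsilon_1}(z)$ pointwise, and (\ref{ScaleFunctionAssumptions}) gives $|\dot\lambda(z)| \leq C_1 \lambda^\epsilon(z)$. Plugging into the previous display yields
\begin{align*}
\|R^*[u]\|_{0,\alpha}(s,z) \leq C\zeta^2 \lambda^{2\epsilon_1}(z)\cosh^{-1}(s) + C\zeta\,\lambda^{\epsilon+\epsilon_1}(z),
\end{align*}
and since $\lambda < 1$ and $\epsilon_2 \leq \epsilon_1$ the first term is bounded by $C\zeta^2\lambda^{2\epsilon_2}\cosh^{-1}(s)$, establishing the first displayed inequality in the proposition. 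For absorption into $\delta\lambda^{\epsilon_0}$, direct substitution with $\epsilon_0 = (1-2\tau)\epsilon$ and $\epsilon_1 = (1-10\tau)\epsilon_0$ shows $\epsilon + \epsilon_1 - \epsilon_0 = \epsilon[1 - 10\tau(1-2\tau)]$ and $2\epsilon_1 - \epsilon_0 = (1-2\tau)(1-20\tau)\epsilon$, both strictly positive when $1 - 20\tau > 0$. Hence each of the two error terms divided by $\lambda^{\epsilon_0}$ is a strictly positive power of $\lambda$, and taking $C_0$ small enough in (\ref{ScaleFunctionAssumptions}) forces $\lambda(z)$ small enough to push both contributions below $\tfrac{\delta}{2}\lambda^{\epsilon_0}(z)$.

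The main obstacle in my view is the first step. Proposition~\ref{FStarGraphMC} as stated does not display the $\cosh^{-1}(s)$ weight on the quadratic term, so it cannot be cited verbatim; instead one must re-examine its proof, carefully tracking both the homogeneity degree $-1$ of $H$ and the normalizing prefactor $\lambda\cosh^2(s)$ of $\tilde H^*$ through the weighted estimate of Proposition~\ref{HQEstimates}. One must also verify that the ambient immersion and perturbation satisfy the smallness hypothesis of Proposition~\ref{HQEstimates}, which for $u \in \Xi$ follows from $\|u\|_{2,\alpha}(s,z) \leq \zeta \lambda^{\epsilon_1}(z)$ together with $|\nabla F^*| \sim \lambda\cosh(s)$ once $\lambda$ (equivalently $C_0$) is taken sufficiently small. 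Once this geometric weight is made explicit, the remaining steps reduce to bookkeeping with the scale function assumptions and the inequality $1 - 20\tau > 0$.
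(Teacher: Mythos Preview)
Your proposal is correct and follows the same route as the paper, which simply records the result as ``a direct consequence of the definition of $\Xi$, Proposition~\ref{FStarGraphMC} and the scale function assumptions.'' You supply considerably more detail than the paper does---in particular, your observation that the $\cosh^{-1}(s)$ weight on the quadratic term is not literally contained in the statement of Proposition~\ref{FStarGraphMC} and must be recovered by revisiting its proof via Proposition~\ref{HQEstimates} is exactly the gap the paper's one-line argument elides, and your exponent bookkeeping verifying $2\epsilon_1 - \epsilon_0 > 0$ and $\epsilon + \epsilon_1 - \epsilon_0 > 0$ under $1 - 20\tau > 0$ is the correct way to conclude.
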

\begin{proof}
This is a direct consequence of the definition of $\Xi$ in (\ref{FixedPointSet}), Proposition \ref{FStarGraphMC} and the scale function assumptions in (\ref{ScaleFunctionAssumptions}).
\end{proof}

\begin{theorem} \label{PreciseMainTheorem}
There is $\bar{C}_0  > 0$ and $\zeta > 0$ so that: For $C_0 \in (0, \bar{C}_0)$, The function $\Psi$ defined in (\ref{FixedPointMap}) is continuous and maps  $\Xi$ into $\Xi$ and thus has a  fixed point, which we denote by $u^* \in \Xi$. The surface $F^*[u]$ is then a smooth, minimal embedded surface in the ball
\begin{align}\notag
\{(x, y, z) : x^2 + y^2 + z^2 \leq 1 \}.
\end{align}
\end{theorem}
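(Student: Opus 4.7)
I will apply the Banach fixed point theorem to $\Psi$ on the closed ball $\Xi$ from (\ref{FixedPointSet}). Any fixed point $u^*$ satisfies $\tilde{\mathcal L}^{-1}\tilde H^*[u^*]=0$, and applying $\tilde{\mathcal L}$ (legitimate because $\tilde{\mathcal L}^{-1}$ is a right inverse to $\tilde{\mathcal L}$ by Proposition \ref{MainInvertibilityStatement}) yields $\tilde H^*[u^*]\equiv 0$, so $F^*[u^*]$ is minimal.

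For self-mapping, substitute $\tilde H^*[u] = \dot\lambda\tanh(s) + \tilde{\mathcal L} u + \tilde R^*[u]$ from Proposition \ref{FStarGraphMC} and cancel the $\tilde{\mathcal L}u$ term to obtain
\[
\Psi(u) = -\tilde{\mathcal L}^{-1}\bigl(\dot\lambda\tanh(s)\bigr) - \tilde{\mathcal L}^{-1}\tilde R^*[u].
\]
The linear inhomogeneous part is integrated using $\tilde{\mathcal L}(\dot\lambda u_0) = \dot\lambda\tanh(s) + E$, from the proposition following (\ref{PeriodicSolution}); thus $-\tilde{\mathcal L}^{-1}(\dot\lambda\tanh(s)) = -\dot\lambda u_0 + \tilde{\mathcal L}^{-1}E$. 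I would estimate $\dot\lambda u_0$ directly in $C^{2,\alpha}(\Lambda,\lambda^{\epsilon_1})$ using $|\dot\lambda|\leq C_1\lambda^\epsilon$ and $|u_0(s)|\leq 1+s^2\leq 2\lambda^{-2\tau\epsilon}$ on $\Lambda$; control $\tilde{\mathcal L}^{-1}E$ by Proposition \ref{MainInvertibilityStatement} together with (\ref{ELambdaWeightedEstimate}); and control $\tilde{\mathcal L}^{-1}\tilde R^*[u]$ for $u\in\Xi$ by Proposition \ref{MainInvertibilityStatement} combined with Proposition \ref{FixedPointSetRemainder}. The first two contributions are $u$-independent, while the third can be forced $\leq\zeta/2$ by shrinking $C_0$; hence $\zeta$ may be chosen so that $\Psi(\Xi)\subset\Xi$.

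For contraction, $\Psi(u) - \Psi(v) = -\tilde{\mathcal L}^{-1}(\tilde R^*[u] - \tilde R^*[v])$, so the task reduces to a Lipschitz estimate of the form
\[
\bigl\|\tilde R^*[u] - \tilde R^*[v] : C^{0,\alpha}(\Lambda,\lambda^{\epsilon_0})\bigr\| \leq (C\zeta + C\lambda_{\max}^\kappa)\bigl\|u-v : C^{2,\alpha}(\Lambda,\lambda^{\epsilon_1})\bigr\|
\]
for some $\kappa>0$. This follows from the homogeneity framework of Proposition \ref{HQEstimates} applied to the expansion underlying Proposition \ref{FStarGraphMC}: $\tilde R^*[u]$ is an order-two Taylor remainder in the normal variation $\lambda u\nu^*$ plus a term linear in $u$ with coefficient proportional to $\dot\lambda$, both of which are Gateaux-Lipschitz with constant pointwise of size $\|u\|_{2,\alpha}+|\dot\lambda|\lesssim \zeta + C_1\lambda^\epsilon$. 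Composing with Proposition \ref{MainInvertibilityStatement} and further shrinking $\zeta$ and $C_0$ forces the contraction constant below $1/2$.

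Banach then supplies $u^*\in\Xi$, hence the minimal $C^{2,\alpha}$ immersion $F^*[u^*]$; elliptic regularity for the minimal surface equation bootstraps to $C^\infty$. Embeddedness follows since $F^*$ itself embeds $\Lambda$ (as $B$ is a diffeomorphism of $R^3$ and $F$ is the standard conformal helicoid chart) and the normal displacement $\lambda u^*\nu^*$ is uniformly $C^1$-small on $\Lambda$ by the $\Xi$-estimate. Containment in the unit ball comes from $z=\sigma(z)\in[0,1]$ and the explicit bound $\lambda(z)\cosh(s) + O(\zeta\lambda^{\epsilon_1})\leq 1$ for $C_0$ small. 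The main obstacle I anticipate is the Lipschitz estimate on $\tilde R^*$: the exponents $\epsilon,\epsilon_0,\epsilon_1,\epsilon_2$ must be tracked carefully across the composition with $\tilde{\mathcal L}^{-1}$ so that the Lipschitz constant defeats the invertibility constant, and the $\tau>0$ slack built into (\ref{LambdaDef}) together with smallness of $C_0$ is what closes the estimate.
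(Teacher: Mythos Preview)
Your plan matches the paper's proof in all essentials: both expand $\tilde H^*[u]=\tilde H^*[0]+\tilde{\mathcal L}u+\tilde R^*[u]$ via Proposition~\ref{FStarGraphMC}, absorb the inhomogeneous term $\tilde H^*[0]=\dot\lambda\tanh(s)$ using the explicit function $u_0$ from (\ref{PeriodicSolution}) together with Proposition~\ref{MainInvertibilityStatement} applied to its remainder $E$, and control $\tilde R^*[u]$ through Proposition~\ref{FixedPointSetRemainder}. The paper packages the first two ingredients as a single reference function $v_0=u_0+u_1$ and estimates $\Psi[u]-\Psi[v_0]$, whereas you compute $\Psi(u)$ directly; these are the same computation.

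The one genuine difference is the fixed-point mechanism. The paper's written proof verifies only $\Psi(\Xi)\subset\Xi$ and then invokes the Schauder fixed point theorem, while you carry out the Lipschitz estimate for $\tilde R^*[u]-\tilde R^*[v]$ and apply Banach. Your route is actually the one announced in the paper's outline (Proposition~\ref{ContractionMapping} asserts $\Psi$ is a contraction), and it has the advantage that no compactness of $\Xi$ or of $\Psi(\Xi)$ needs to be checked. One small point: in your contraction step the Lipschitz constant is of order $\zeta\lambda^{\epsilon_1}+\lambda^{\epsilon}$ (the $\zeta$ always appears multiplied by a power of $\lambda$), so the contraction is obtained by shrinking $C_0$ alone; you do not need to, and should not, shrink $\zeta$ after having fixed it large enough for self-mapping.
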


\begin{definition}
We let $u_1$ be the function obtained by applying Proposition \ref{MainInvertibilityStatement} to the function
\begin{align}\notag
 E (s, z) : = -\dot{\lambda}(z) \tanh(s) + \tilde{L} u_0.
\end{align}
where $u_0$ is the function described in (\ref{PeriodicSolution}).
\end{definition}

\begin{proposition}
Set $v_0 : = u_1 + u_0$. Then the following statements hold:
\begin{enumerate}
\item $v_0$ satisfies the equation:
\begin{align} \notag
\tilde{\mathcal{L}} v_0 = \tilde{H}^*[0]
\end{align}
\item $v_0$ satisfies the estimate:
\begin{align}\notag
\| v_0 :  C^{2,\alpha} (\Lambda, \lambda^{\epsilon_2})\| \leq C
\end{align}
\end{enumerate}
\end{proposition}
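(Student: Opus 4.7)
The plan is to exploit linearity of $\tilde{\mathcal{L}}$ and realize $v_0$ as the sum of an explicit principal guess together with a correction produced by Proposition \ref{MainInvertibilityStatement}. As a preliminary step I would observe, from Proposition \ref{FStarGraphMC} and the remainder bound $\|R^*[u]\|_{0,\alpha} \leq C(\dot{\lambda}\|u\|_{2,\alpha} + \|u\|_{2,\alpha}^2)$, that $\tilde{R}^*[0] = 0$. Hence the right-hand side in (1) is $\tilde{H}^*[0] = \dot{\lambda}(z)\tanh(s)$, and the task reduces to verifying the identity $\tilde{\mathcal{L}} v_0 = \dot{\lambda}(z)\tanh(s)$.

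To establish this identity I would invoke the earlier proposition which computes
\[
\tilde{\mathcal{L}}(\dot{\lambda}\, u_0) = \dot{\lambda}(z)\tanh(s) + E,
\]
with $E$ the explicit remainder determined there. The correction $u_1$ is defined by applying the main invertibility statement to precisely this $E$, so that $\tilde{\mathcal{L}} u_1 = E$. Reading the definition $v_0 := u_0 + u_1$ with the principal piece $\dot{\lambda} u_0$ and the correction $u_1$ combined with the appropriate sign (so that remainders cancel), linearity then gives
\[
\tilde{\mathcal{L}} v_0 \;=\; \tilde{\mathcal{L}}(\dot{\lambda}\, u_0) - \tilde{\mathcal{L}} u_1 \;=\; \bigl(\dot{\lambda}(z)\tanh(s) + E\bigr) - E \;=\; \dot{\lambda}(z)\tanh(s) \;=\; \tilde{H}^*[0],
\]
which is (1).

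For (2), the correction $u_1$ inherits its weighted $C^{2,\alpha}$ bound directly from Proposition \ref{MainInvertibilityStatement}, which yields
\[
\|u_1 : C^{2,\alpha}(\Lambda, \lambda^{\epsilon_1})\| \;\leq\; C\,\|E : C^{0,\alpha}(\Lambda, \lambda^{\epsilon_0})\|,
\]
so the heart of the matter is bounding the right-hand side by a universal constant. I would do this by combining the pointwise estimate $\|u_0\|_{j,\alpha}(s) \leq 1 + s^2$ with the convex-pinching assumptions \eqref{ScaleFunctionAssumptions} governing $|\dot\lambda|,\,|\ddot\lambda\lambda|,\,|\dddot\lambda\lambda^2|\leq C_1\lambda^\epsilon$; this is exactly the global bound \eqref{RemainderTermGlobalEstimate}, and restricting to $\Lambda$ where $|s|\leq\lambda^{-\tau\epsilon}$ converts it, via \eqref{ELambdaWeightedEstimate}, into $\|E:C^{0,\alpha}(\Lambda,\lambda^{\epsilon_0})\|\leq C$. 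The principal piece $\dot{\lambda} u_0$ is controlled by the same convex-pinching argument: $|\dot{\lambda}|(1 + s^2) \leq C_1 \lambda^\epsilon \lambda^{-2\tau\epsilon} = C_1\lambda^{\epsilon_0}$ on $\Lambda$, with higher derivatives handled similarly (each $\partial_z$ brings down a factor of $\lambda$ and a bounded combination of $\dot\lambda,\ddot\lambda,\dddot\lambda$). Since $\lambda^{\epsilon_0}\leq \lambda^{\epsilon_2}$ (both exponents positive and $\lambda<1$), adding the two contributions gives $\|v_0 : C^{2,\alpha}(\Lambda, \lambda^{\epsilon_2})\| \leq C$.

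I expect the main obstacle to be essentially bookkeeping: reconciling the error term $E$ fed into Proposition \ref{MainInvertibilityStatement} (with the sign it is invoked with) against the remainder in the earlier formula for $\tilde{\mathcal{L}}(\dot{\lambda}u_0)$, and then tracking the interplay of the exponents $\epsilon,\epsilon_0,\epsilon_1,\epsilon_2$ introduced in \eqref{LambdaDef}, \eqref{ELambdaWeightedEstimate} and Proposition \ref{MainInvertibilityStatement} so that the final weighted bound is genuinely in the target space $C^{2,\alpha}(\Lambda,\lambda^{\epsilon_2})$ and not in a slightly stronger or weaker space.
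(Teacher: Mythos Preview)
The paper states this proposition without supplying a separate proof; your argument is exactly the intended one and is correct. The sign and notation juggling you flag in your final paragraph reflects genuine inconsistencies in the paper's own definitions (the symbol $u_0$ in $v_0=u_1+u_0$ must stand for the approximate solution $\dot\lambda\,u_0$ of \eqref{PeriodicSolution}, and the inhomogeneity fed to Proposition~\ref{MainInvertibilityStatement} must be read so that $\tilde{\mathcal L}u_1$ cancels the remainder $E$ rather than doubles it), not a gap in your reasoning.
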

\begin{proof} [Proof of Theorem \ref{PreciseMainTheorem}]
Proposition \ref{FStarGraphMC} gives that 
\begin{align} \notag
\tilde{H}^*[u ]  = \tilde{H}^*[0] + \tilde{\mathcal{L}} u + \tilde{R}^*[u].
\end{align}
This then gives that 
\begin{align} \notag
\Psi[u] - \Psi[v_0]  &= u - v_0 - \tilde{\mathcal{L}}^{-1}\left( \tilde{H}^*[u] - \tilde{H}^*[v_0]\right) \\ \notag
& = \mathcal{L}^{-1} \left( \tilde{R}^*[u] - \tilde{R}^*[v_0]\right)
\end{align}
We then have From Proposition \ref{MainInvertibilityStatement} and Proposition \ref{FixedPointSetRemainder}   that 
\begin{align} \notag
\|  \mathcal{L}^{-1} \left( \tilde{R}^*[u] - \tilde{R}^*[v_0]\right) : C^{2, \alpha} (\Lambda, \lambda^{\epsilon_2})\| \leq C(2 \delta)
\end{align}
Taking $\zeta$ sufficiently large and $\delta$ sufficiently small gives that $\Psi(\Xi) \subset (\Xi)$. Since the map $\Psi$ depends continuously on $u$, the Schauder Fixed Point Theroem gives the existence of a function $u^* \in \Xi$ such that 
\begin{align} \notag
\Psi(u^*) = u^*.
\end{align}
It then follows directly that $\tilde{H}^*[u] = 0$, so that $F^*[u]$ is a $C^{2, \alpha}$ minimal immersion. Higher estimates follow from standard elliptic theory.  The embeddedness of $F^*[u]$ in the unit ball around the origin follows directly from the definition of $\Xi$. 
\end{proof}
\bibliographystyle{amsalpha}

 \end{document}